\theoremstyle{plain}
\newtheorem{thm}{Theorem}[section]
\newtheorem{prop}[thm]{Proposition}
\newtheorem{lem}[thm]{Lemma}
\newtheorem{con}[thm]{Conjecture}
\newtheorem{corl}[thm]{Corollary}
\theoremstyle{definition}
\newtheorem{defn}[thm]{Definition}
\theoremstyle{plain}
\newtheorem{thms}{Theorem}[subsection]
\newtheorem{props}[thms]{Proposition}
\newtheorem{lems}[thms]{Lemma}
\newtheorem{corls}[thms]{Corollary}
\theoremstyle{definition}
\newtheorem{defns}[thms]{Definition}
\newtheorem{notas}[thms]{Notation}
\numberwithin{equation}{section}
\begin{document}

\title[The generators of the polynomial algebra]
{On a construction for the generators of the polynomial algebra as a module over the Steenrod algebra}

 \author{Nguy\~\ecircumflex n Sum} 

\address{Department of Mathematics, Quy Nh\ohorn n University, 170 An D\uhorn \ohorn ng V\uhorn \ohorn ng, Quy Nh\ohorn n, B\`inh \DJ \d inh, Viet Nam}

\email{nguyensum@qnu.edu.vn}

\subjclass[2010]{Primary 55S10; Secondary 55S05, 55T15}


\keywords{Steenrod algebra, Peterson hit problem, algebraic transfer, polynomial algebra}

\begin{abstract}
 Let $P_n$ be the graded polynomial algebra $\mathbb F_2[x_1,x_2,\ldots ,x_n]$ with the degree of each generator $x_i$
being 1, where $\mathbb F_2$ denote the prime field of two elements. 

The Peterson hit problem is to find a minimal generating set for $P_n$ regarded as a module over the  mod-2 Steenrod algebra, $\mathcal{A}$. Equivalently, we want to find a vector space basis for $\mathbb F_2 \otimes_{\mathcal A} P_n$ in each degree $d$. Such a basis may be represented by a list of monomials of degree $d$.

In this paper, we present a construction for the $\mathcal A$-generators of $P_n$ and  prove some properties of it. We also explicitly determine a basis of $\mathbb F_2 \otimes_{\mathcal A} P_n$ for $n = 5$ and the degree $d = 15.2^s - 5$ with $s$ an arbitrary positive integer. 
These results are used to verify Singer's conjecture for the fifth Singer algebraic transfer in respective degree.
\end{abstract}

\maketitle


\section{Introduction}\label{s1} 
\setcounter{equation}{0}

 Denote by $P_n:= \mathbb F_2[x_1,x_2,\ldots ,x_n]$ the polynomial algebra over the field of two elements, $\mathbb F_2$, in $n$ generators $x_1, x_2, \ldots , x_n$, each of degree 1. This algebra arises as the cohomology with coefficients in $\mathbb F_2$ of a classifying space of an elementary abelian 2-group of rank $n$.  Therefore, $P_n$ is a module over the mod-2 Steenrod algebra, $\mathcal A$.   
The action of $\mathcal A$ on $P_n$ is determined by the elementary properties of the Steenrod squares $Sq^i$ and subject to the Cartan formula
$Sq^k(fg) = \sum_{i=0}^kSq^i(f)Sq^{k-i}(g),$
for $f, g \in P_n$ (see Steenrod and Epstein~\cite{st}).

A polynomial $g$ in $P_n$ is called \textit{hit} if it belongs to  $\mathcal{A}^+P_n$, where $\mathcal{A}^+$ is the augmentation ideal of $\mathcal A$. That means $g$ can be written as a finite sum $g = \sum_{j > 0}Sq^{j}(g_j)$ for suitable polynomials $g_j \in P_n$.  

We study the \textit{Peterson hit problem} of determining a minimal set of generators for the polynomial algebra $P_n$ as a module over the Steenrod algebra. Equivalently, we want to find a vector space basis for the quotient 
$$QP_n := P_n/\mathcal A^+P_n = \mathbb F_2 \otimes_{\mathcal A} P_n.$$ 
The problem was first studied by Peterson~\cite{pe}, Wood~\cite{wo}, Singer~\cite {si1}, and Priddy \cite{pr}, who showed its relation to several classical problems in the homotopy theory. Then, this problem  was investigated by Carlisle and Wood~\cite{cw}, Crabb and Hubbuck~\cite{ch}, Janfada and Wood~\cite{jw1}, Kameko~\cite{ka}, Mothebe \cite{mo}, Nam~\cite{na}, Phuc and Sum \cite{sp,sp2}, Silverman~\cite{sl}, Silverman and Singer~\cite{ss}, Singer~\cite{si2}, Walker and Wood~\cite{wa}, Wood~\cite{wo2} and others.

The vector space $QP_n$ was explicitly calculated by Peterson~\cite{pe} for $n=1, 2,$ by Kameko~\cite{ka} for $n=3$, and recently by us \cite{su2,su5}  for $n=4$, unknown in general. 

Let $GL_n$ be the general linear group over the field $\mathbb F_2$. This group acts naturally on $P_n$ by matrix substitution. Since the two actions of $\mathcal A$ and $GL_n$ upon $P_n$ commute with each other, there is an inherited action of $GL_n$ on $QP_n$. 

Denote by $(P_n)_d$ the subspace of $P_n$ consisting of all the homogeneous polynomials of degree $d$ in $P_n$ and by $(QP_n)_d$ the subspace of $QP_n$ consisting of all the classes represented by the elements in $(P_n)_d$. 
In \cite{si1}, Singer defined the algebraic transfer,  which is a homomorphism
$$\varphi_n :\text{Tor}^{\mathcal A}_{n,n+d} (\mathbb F_2,\mathbb F_2) \longrightarrow  (QP_n)_d^{GL_n}$$
from the homology of the Steenrod algebra to the subspace of $(QP_n)_d$ consisting of all the $GL_n$-invariant classes. It is a useful tool in describing the homology groups of the Steenrod algebra, $\text{Tor}^{\mathcal A}_{n,n+d} (\mathbb F_2,\mathbb F_2)$. This transfer was studied by  Boardman~\cite{bo}, Bruner, H\`a and H\uhorn ng~\cite{br}, H\uhorn ng ~\cite{hu}, Ch\ohorn n-H\`a~ \cite{cha1,cha2}, Minami ~\cite{mi}, Nam~ \cite{na2}, H\uhorn ng-Qu\`ynh~ \cite{hq}, the present author \cite{su4} and others.

It was shown that the transfer is an isomorphism for $n = 1, 2$ by Singer in \cite{si1} and for $n = 3$ by Boardman in \cite{bo}. However, for any $n \geqslant 4$,  $\varphi_n$ is not a monomorphism in infinitely many degrees (see Singer \cite{si1}, H\uhorn ng \cite{hu}.) Singer made a conjecture that \textit{the algebraic transfer $\varphi_n$ is an epimorphism for any $n \geqslant 0$.}
This conjecture is true for $n\leqslant 3$. It can be verified for $n=4$ by using the results in \cite{su2,su5}. The conjecture  for $n\geqslant 5$ is an open problem.

In this paper, we present a construction for the $\mathcal A$-generators of $P_n$ and explicitly determine a basis of $\mathbb F_2 \otimes_{\mathcal A} P_n$ for $n = 5$ and the degree $d = 15.2^s - 5$ with $s$ an arbitrary positive integer. These results are used to study the fifth Singer algebraic transfer. We will show that Singer's conjecture for the fifth algebraic transfer is true in this case.

\medskip
This paper is organized as follows. In Section \ref{s2}, we recall  some needed information on the weight vectors of monomials, the admissible monomials in $P_n$ and Singer's criterion on the hit monomials.  In Sections \ref{s3}, we present a construction for the $\mathcal A$-generators of $P_n$ and some properties of it. A basis of $QP_5$ in the degree $d = 15.2^s - 5$ will be explicitly determined in Section \ref{s4}. In Section \ref{s5}, we show that Singer's conjecture for the fifth algebraic transfer is true in the internal degree $d$ by computing the space $(QP_5)_{d}^{GL_5}$. Finally, in Section \ref{s6} we list all admissible monomials of the degree $d$ in $P_5$.

\section{Preliminaries}\label{s2}
\setcounter{equation}{0}

In this section, we recall some needed information from Kameko~\cite{ka} and Singer \cite{si2} on the weight vector of a monomial, the admissible monomials and Kameko's homomorphism.

\subsection{The weight vector of a monomial}\

\begin{notas} We denote $\mathbb N_n = \{1,2, \ldots , n\}$, $ \mathbb J = \{j_1,j_2,\ldots , j_t\}\subset \mathbb N_n$ and
\begin{align*}
X_{\mathbb J} = X_{\{j_1,j_2,\ldots , j_t\}} =
 \prod_{j\in \mathbb N_n\setminus \mathbb J}x_j.
\end{align*}
In particular, $X_{\mathbb N_n} =1,\
X_\emptyset = x_1x_2\ldots x_n,$ 
$X_j = x_1\ldots \hat x_j \ldots x_n, \ 1 \leqslant j \leqslant n,$ and $X:=X_n = x_1x_2\ldots x_{n-1}\in P_{n-1}.$

Let $\alpha_i(a)$ denote the $i$-th coefficient  in dyadic expansion of a non-negative integer $a$. That means
$a= \alpha_0(a)2^0+\alpha_1(a)2^1+\alpha_2(a)2^2+ \ldots ,$ for $ \alpha_i(a) =0$ or 1 with $i\geqslant 0$. 

Let $x=x_1^{a_1}x_2^{a_2}\ldots x_n^{a_n} \in P_n$. Denote $\nu_t(x) = a_t, 1 \leqslant t \leqslant n$.  
Set 
$$\mathbb J_i(x) = \{t \in \mathbb N_n :\alpha_t(\nu_i(x)) =0\},$$
for $i\geqslant 0$. Then, we have
$x = \prod_{i\geqslant 0}X_{\mathbb J_i(x)}^{2^i}.$ 
\end{notas}
\begin{defns}
For a monomial  $x$ in $P_n$,  define two sequences associated with $x$ by
\begin{align*} 
\omega(x)&=(\omega_1(x),\omega_2(x),\ldots , \omega_i(x), \ldots),\\
\sigma(x) &= (\nu_1(x),\nu_2(x),\ldots ,\nu_n(x)),
\end{align*}
where
$\omega_i(x) = \sum_{1\leqslant j \leqslant n} \alpha_{i-1}(\nu_j(x))= \deg X_{\mathbb J_{i-1}(x)},\ i \geqslant 1.$
The sequences $\omega(x)$ and $\sigma(x)$ is respectively called  the weight vector and the exponent vector of $x$. 

Let $\omega=(\omega_1,\omega_2,\ldots , \omega_i, \ldots)$ be a sequence of non-negative integers.  The sequence $\omega$  are called  the weight vector if $\omega_i = 0$ for $i \gg 0$.
\end{defns}

The sets of all the weight vectors and the exponent vectors are given the left lexicographical order. 

  For a  weight vector $\omega$,  we define $\deg \omega = \sum_{i > 0}2^{i-1}\omega_i$.  If  there are $i_0=0, i_1, i_2, \ldots , i_r > 0$ such that $i_1 + i_2 + \ldots + i_r  = m$, $\omega_{i_1+\ldots +i_{s-1} + t} = b_s, 1 \leqslant t \leqslant i_s, 1 \leqslant s \leqslant  r$, and $\omega_i=0$ for all $i > m$, then we write $\omega = (b_1^{(i_1)},b _2^{(i_2)},\ldots , b_r^{(i_r)})$. Denote $b_u^{(1)} = b_u$.  For example, $\omega = (3,2,2,2,1,1,0,\ldots) = (3,2^{(3)},1^{(2)})$.

Denote by   $P_n(\omega)$ the subspace of $P_n$ spanned by all monomials $y$ such that
$\deg y = \deg \omega$, $\omega(y) \leqslant \omega$, and by $P_n^-(\omega)$ the subspace of $P_n$ spanned by all monomials $y \in P_n(\omega)$  such that $\omega(y) < \omega$. Denote by $\mathcal A^+_s$ the subspace of $\mathcal A$ spanned by all $Sq^j$ with $1\leqslant j < 2^s$.

\begin{defns}\label{dfn2} Let $\omega$ be a sequence of non-negative integers and $f, g$ two polynomials  of the same degree in $P_n$. 

i) $f \equiv g$ if and only if $f - g \in \mathcal A^+P_n$. 

ii) $f \equiv_{\omega} g$ if and only if $f - g \in \mathcal A^+P_n+P_n^-(\omega)$. 

iii) $f \simeq_{(s,\omega)} g$ if and only if $f - g \in \mathcal A^+_sP_n+P_n^-(\omega)$.

\end{defns}

Obviously, the relations $\equiv$,  $\equiv_{\omega}$ and $\simeq_{(s,\omega)}$ are equivalence ones. If $f \equiv_{\omega} g$, then $f \simeq_{(s,\omega)} g$ for some $s \geqslant 0$. If $x$ is a monomial in $P_n$ and $\omega = \omega(x)$, then we denote $x \simeq_{s}g$  if and only if $x \simeq_{(s,\omega(x))}g$. 

\begin{props}[see \cite{su5}]\label{mdcb4}  Let $x, y$ be  monomials and let $f, g $ be polynomials in $P_n$ such that $\deg x = \deg f $, $\deg y = \deg g$.

{\rm i)} If $\omega_i(x) \leqslant 1$ for $i > s$ and $x\simeq_{t}f$ with $t \leqslant s$, then $xy^{2^s}\simeq_{t}f y^{2^s}$. 

{\rm ii)} If  $\omega_i(x) = 0$ for $i > s $, $x\simeq_{s} f$ and $y \simeq_r g$, then $xy^{2^s} \simeq_{s+r} fg^{2^s}$.
\end{props}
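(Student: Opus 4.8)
The plan is to treat both parts through one device: the \emph{$2^s$-adic splitting} of a monomial. Writing each exponent as $\nu_i(z)=b_i+2^sc_i$ with $0\leqslant b_i<2^s$, every monomial factors uniquely as $z=z_\flat\,z_\sharp^{\,2^s}$, where $z_\flat=\prod_i x_i^{b_i}$ has all exponents $<2^s$; since the dyadic digits of $b_i$ and of $2^sc_i$ occupy disjoint ranges, one reads off $\omega_i(z)=\omega_i(z_\flat)$ for $i\leqslant s$ and $\omega_i(z)=\omega_{i-s}(z_\sharp)$ for $i>s$. In particular, if $u$ has all exponents $<2^s$ and $v$ is arbitrary, then $uv^{2^s}$ is already split, so $\omega_i(uv^{2^s})=\omega_i(u)$ for $i\leqslant s$ and $=\omega_{i-s}(v)$ for $i>s$; and for a general $m=m_\flat m_\sharp^{\,2^s}$ one has $mv^{2^s}=m_\flat(m_\sharp v)^{2^s}$, whence $\omega_i(mv^{2^s})=\omega_i(m)$ for $i\leqslant s$. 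Two Cartan consequences will be used throughout: since $Sq^{j'}(e^{2^s})=0$ for $0<j'<2^s$, the Cartan formula collapses to $Sq^j(h)\,e^{2^s}=Sq^j(h\,e^{2^s})$ for $0\leqslant j<2^s$; and $(Sq^jh)^{2^s}=Sq^{2^sj}(h^{2^s})$.

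For {\rm i)}, I would start from $x-f\in \mathcal A^+_tP_n+P_n^-(\omega(x))$ and multiply by $y^{2^s}$. A summand $Sq^j(h)y^{2^s}$ with $1\leqslant j<2^t\leqslant 2^s$ equals $Sq^j(hy^{2^s})\in\mathcal A^+_tP_n$ by the first Cartan identity, so the Steenrod part is harmless. For a monomial $m\in P_n^-(\omega(x))$ it remains to see $my^{2^s}\in P_n^-(\omega(xy^{2^s}))$; the degrees agree, so only the weight vectors need comparing. By the splitting, the first $s$ coordinates of $\omega(my^{2^s})$ and of $\omega(xy^{2^s})$ coincide with those of $m$ and of $x$. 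If the first coordinate at which $\omega(m)$ falls below $\omega(x)$ is $\leqslant s$, this already gives $\omega(my^{2^s})<\omega(xy^{2^s})$. The hypothesis $\omega_i(x)\leqslant 1$ for $i>s$ excludes the remaining possibility: it says the upper part $x_\sharp$ has all weight coordinates $\leqslant 1$, so $\omega(x_\sharp)$ is the least weight vector of its degree in the left lexicographic order; and equality of the first $s$ coordinates forces $\deg m_\sharp=\deg x_\sharp$, after which no $m$ could satisfy $\omega(m_\sharp)<\omega(x_\sharp)$. Hence the discrepancy always lies at a coordinate $\leqslant s$.

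For {\rm ii)}, I would use the decomposition
\[
xy^{2^s}-fg^{2^s}=(x-f)g^{2^s}+x(y-g)^{2^s}.
\]
The first summand is handled exactly as in {\rm i)} (now $\omega_i(x)=0$ for $i>s$ makes the comparison automatic, since any discrepancy with $\omega(x)$ must already occur at a coordinate $\leqslant s$), landing in $\mathcal A^+_sP_n+P_n^-(\omega(xy^{2^s}))\subseteq\mathcal A^+_{s+r}P_n+P_n^-(\omega(xy^{2^s}))$. For the second summand, write $y-g=\Sigma+q$ with $\Sigma=\sum_kSq^{j_k}(h_k)\in\mathcal A^+_rP_n$ and $q\in P_n^-(\omega(y))$. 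Then $xq^{2^s}$ lies in $P_n^-(\omega(xy^{2^s}))$ by the splitting (the first $s$ coordinates match those of $x$, while the upper ones drop because each monomial of $q$ is below $\omega(y)$). For $x\Sigma^{2^s}=\sum_k x\,Sq^{2^sj_k}(h_k^{2^s})$ I would integrate by parts with Cartan: because $h_k^{2^s}$ is a $2^s$-th power,
\[
Sq^{2^sj_k}(x\,h_k^{2^s})=\sum_{t=0}^{j_k}Sq^{2^st}(x)\,(Sq^{j_k-t}h_k)^{2^s},
\]
so $x\,Sq^{2^sj_k}(h_k^{2^s})=Sq^{2^sj_k}(x\,h_k^{2^s})+\sum_{t=1}^{j_k}Sq^{2^st}(x)\,(Sq^{j_k-t}h_k)^{2^s}$, where the leading term lies in $\mathcal A^+_{s+r}P_n$ because $1\leqslant 2^sj_k<2^{s+r}$.

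The crux, and the step I expect to be the main obstacle, is to show that each correction term $Sq^{2^st}(x)(Sq^{j_k-t}h_k)^{2^s}$ with $t\geqslant1$ lies in $P_n^-(\omega(xy^{2^s}))$. For this I would establish the key lemma: \emph{if $x$ has all exponents $<2^s$ and $t\geqslant1$, then every monomial $z$ in $Sq^{2^st}(x)$ satisfies $(\omega_1(z),\ldots,\omega_s(z))<(\omega_1(x),\ldots,\omega_s(x))$ in the left lexicographic order.} Granting this, multiplying $z$ by $(Sq^{j_k-t}h_k)^{2^s}$ leaves the first $s$ weight coordinates unchanged, so the product drops below $\omega(xy^{2^s})$ already there and, having the correct degree, lies in $P_n^-(\omega(xy^{2^s}))$. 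To prove the lemma I would write $z=\prod_i x_i^{a_i+k_i}$ with $k_i$ dyadically contained in $a_i$ and $\sum_ik_i=2^st$, and let $p$ be the lowest dyadic position occurring in any $k_i$ (so $p\leqslant s-1$, as each $k_i<2^s$ and some $k_i\neq0$). Below position $p$ no carries are created in the sums $a_i+k_i$, so $\omega_l(z)=\omega_l(x)$ for $l\leqslant p$; at position $p$, every index with a $1$ there in $k_i$ also has a $1$ there in $a_i$, so those positions cancel with an outgoing carry, giving $\omega_{p+1}(z)=\omega_{p+1}(x)-\abs{\{i:\alpha_p(k_i)=1\}}<\omega_{p+1}(x)$. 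This is the required strict drop at coordinate $p+1\leqslant s$. Summing over $k$ and combining the two summands then yields $xy^{2^s}\simeq_{s+r}fg^{2^s}$.
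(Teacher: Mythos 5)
Your proposal is correct, but there is nothing in this paper to compare it against line by line: Proposition \ref{mdcb4} is stated here with a citation to \cite{su5} and no proof is given, so your argument is judged on its own merits. It holds up. The $2^s$-adic splitting identities ($\omega_i(z)=\omega_i(z_\flat)$ for $i\leqslant s$, $\omega_i(z)=\omega_{i-s}(z_\sharp)$ for $i>s$, and $mv^{2^s}=m_\flat(m_\sharp v)^{2^s}$) are right, and they reduce both parts to weight-vector bookkeeping plus Cartan. In part (i), the collapse $Sq^j(h)y^{2^s}=Sq^j(hy^{2^s})$ for $j<2^t\leqslant 2^s$ handles the Steenrod part, and your lexicographic-minimality observation is the right way to exploit the hypothesis $\omega_i(x)\leqslant 1$ for $i>s$: a weight vector with all entries $\leqslant 1$ is the left-lex least of its degree (parity forces each successive coordinate), so a monomial $m<_\omega x$ agreeing with $x$ in the first $s$ weight coordinates would need $\omega(m_\sharp)<\omega(x_\sharp)$ in the same degree, which is impossible; hence the discrepancy sits at a coordinate $\leqslant s$ and survives multiplication by $y^{2^s}$. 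In part (ii), the decomposition $xy^{2^s}-fg^{2^s}=(x-f)g^{2^s}+x(y-g)^{2^s}$ is valid in characteristic $2$, and you correctly identified the crux: the correction terms $Sq^{2^st}(x)(Sq^{j_k-t}h_k)^{2^s}$ with $t\geqslant 1$ coming from integrating $x\,Sq^{2^sj_k}(h_k^{2^s})$ by parts, while the leading term lies in $\mathcal A^+_{s+r}P_n$ because $1\leqslant 2^sj_k<2^{s+r}$. Your key lemma is proved soundly: by Lucas' theorem the surviving summands of $Sq^{2^st}(x)$ have $k_i$ dyadically contained in $a_i<2^s$, so the lowest occupied bit position $p$ satisfies $p\leqslant s-1$, no carries occur below $p$, and at $p$ each contributing index cancels a digit with an outgoing carry, giving $\omega_{p+1}(z)=\omega_{p+1}(x)-\abs{\{i:\alpha_p(k_i)=1\}}<\omega_{p+1}(x)$; since multiplication by any $2^s$-th power preserves the first $s$ weight coordinates, every correction term falls into $P_n^-(\omega(xy^{2^s}))$, with degrees matching throughout. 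This is a complete and self-contained proof of the proposition that this paper imports as a black box.
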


Denote by $QP_n(\omega)$ the quotient of $P_n(\omega)$ by the equivalence relation $\equiv_\omega$. Then, we have 
$$QP_n(\omega)= P_n(\omega)/ ((\mathcal A^+P_n\cap P_n(\omega))+P_n^-(\omega)).$$  
It is easy to see that
$(QP_n)_d \cong \bigoplus_{\deg \omega = d}QP_n(\omega).$

We note that the weight vector of a monomial is invariant under the permutation of the generators $x_i$, hence $QP_n(\omega)$ has an action of the symmetric group $\Sigma_n$. Furthermore, we have the following.

\begin{lems}[see \cite{su3}]\label{bdm} Let $\omega$ be a weight vector. Then,  $QP_n(\omega)$ is an $GL_n$-module.
\end{lems}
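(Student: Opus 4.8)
The plan is to reduce everything to one combinatorial fact: the $GL_n$-action never increases the weight vector of a monomial, so it preserves the whole weight filtration. Concretely, I would first establish that for every $g\in GL_n$ and every monomial $y$ with $\deg y=\deg\omega$, the image $g\cdot y$ is a sum of monomials $z$ each satisfying $\omega(z)\leqslant \omega(y)$. Granting this, all three subspaces entering the definition of $QP_n(\omega)$ are $GL_n$-stable. Indeed, $P_n(\omega)$ is spanned by monomials of weight $\leqslant\omega$ and $P_n^-(\omega)$ by monomials of weight $<\omega$, so applying $g$ to a spanning monomial yields only monomials of weight no larger than its own, hence still $\leqslant\omega$ (resp.\ $<\omega$); and $\mathcal A^+P_n$ is $GL_n$-invariant because the $\mathcal A$- and $GL_n$-actions on $P_n$ commute, whence $\mathcal A^+P_n\cap P_n(\omega)$ is invariant as well. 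Therefore $(\mathcal A^+P_n\cap P_n(\omega))+P_n^-(\omega)$ is a $GL_n$-submodule of $P_n(\omega)$, and the quotient $QP_n(\omega)$ inherits a $GL_n$-action.

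It remains to prove the weight-nonincrease claim. For this I would use that $GL_n=GL_n(\mathbb F_2)$ is generated by the permutation matrices together with the transvections $x_i\mapsto x_i+x_j$ (the other variables being fixed). Permutations leave the weight vector unchanged, as noted just before the statement, so it suffices to treat one transvection, say $g\colon x_1\mapsto x_1+x_2$. Writing $y=x_1^{a_1}\cdots x_n^{a_n}$ and expanding over $\mathbb F_2$, Lucas' theorem gives
\[
g\cdot y=\sum_{k\preceq a_1} x_1^{k}\,x_2^{a_2+(a_1-k)}\,x_3^{a_3}\cdots x_n^{a_n},
\]
where $k\preceq a_1$ means that the binary digits of $k$ are dominated by those of $a_1$; in particular $k$ and $m:=a_1-k$ have disjoint binary supports, so $\alpha_{i-1}(a_1)=\alpha_{i-1}(k)+\alpha_{i-1}(m)$ for all $i$.

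The heart of the matter is to compare, digit by digit, the weight vector of each term $z_k$ with that of $y$. Only the exponents of $x_1$ and $x_2$ change, so $\omega_i(z_k)-\omega_i(y)=\alpha_{i-1}(a_2+m)-\alpha_{i-1}(a_2)-\alpha_{i-1}(m)$. Analyzing the binary addition $a_2+m$ with carries $\mathrm{carry}_i\in\{0,1\}$ and $\mathrm{carry}_0=0$, the column identity $\alpha_i(a_2)+\alpha_i(m)+\mathrm{carry}_i=\alpha_i(a_2+m)+2\,\mathrm{carry}_{i+1}$ rearranges to the clean formula $\alpha_i(a_2+m)-\alpha_i(a_2)-\alpha_i(m)=\mathrm{carry}_i-2\,\mathrm{carry}_{i+1}$. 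A short induction from $\mathrm{carry}_0=0$ shows that at the first index where this difference is nonzero the incoming carry is still $0$, so the difference equals $-2<0$. Reading indices from the bottom, the first coordinate in which $\omega(z_k)$ and $\omega(y)$ differ is thus strictly smaller in $\omega(z_k)$, i.e.\ $\omega(z_k)<\omega(y)$ in the left lexicographic order (with equality exactly when $a_2$ and $m$ have disjoint supports). Composing such generators then gives the claim for all of $GL_n$. The one step deserving care is precisely this carry analysis: the whole argument hinges on the sign of the first nonzero digit-defect being negative, and on correctly matching the bottom-up indexing of carries with the left lexicographic convention on weight vectors.
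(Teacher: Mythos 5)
Your proof is correct. The paper does not prove Lemma \ref{bdm} in-text (it defers to \cite{su3}), but the argument there is essentially yours: reduce to the generators $\rho_1,\dots,\rho_n$ of $GL_n$, note that permutations preserve weight vectors, and verify by a binary-carry computation that the transvection $x_1\mapsto x_1+x_2$ sends each monomial to a sum of monomials whose weight vectors are lexicographically no larger, so that $P_n(\omega)$, $P_n^-(\omega)$ and $\mathcal A^+P_n\cap P_n(\omega)$ are all $GL_n$-stable and the quotient inherits the action.
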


For a  polynomial $f \in  P_n$, we denote by $[f]$ the class in $QP_n$ represented by $f$. If  $\omega$ is a weight vector, then denote by $[f]_\omega$ the class by the equivalence relation $\equiv_\omega$ which is represented by $f$. Denote by $|S|$ the cardinal of a set $S$.

\medskip
For $1 \leqslant i \leqslant n$, define the $\mathcal{A}$-homomorphism  $\rho_i:P_n \to P_n$, which is determined by $\rho_i(x_i) = x_{i+1}, \rho_i(x_{i+1}) = x_i$, $\rho_i(x_j) = x_j$ for $j \ne i, i+1,\ 1 \leqslant i < n$, and $\rho_n(x_1) = x_1+x_2$,  $\rho_n(x_j) = x_j$ for $j > 1$.  

It is easy to see that the general linear group $GL_n$ is generated by the matrices associated with $\rho_i,\ 1\leqslant i \leqslant n,$ and the symmetric group $\Sigma_n$ is generated by the ones associated with $\rho_i,\ 1 \leqslant i < n$. 
So, a class $[f]_\omega$ represented by a homogeneous polynomial $f \in P_n$ is an $GL_n$-invariant if and only if $\rho_i(f) \equiv_\omega f$ for $1 \leqslant i\leqslant n$.  $[f]_\omega$ is an $\Sigma_n$-invariant if and only if $\rho_i(f) \equiv_\omega f$ for $1 \leqslant i < n$. 

\subsection{The admissible monomial}\

\begin{defns}[see Kameko \cite{ka}]\label{defn3} Let $x, y$ be monomials of the same degree in $P_n$. We say that $x <y$ if and only if one of the following holds:  

i) $\omega (x) < \omega(y)$;

ii) $\omega (x) = \omega(y)$ and $\sigma(x) < \sigma(y).$
\end{defns}

\begin{defns}[see Kameko \cite{ka}]
A monomial $x$ is said to be inadmissible if there exist monomials $y_1,y_2,\ldots, y_m$ such that $y_t<x$ for $t=1,2,\ldots , m$ and $x - \sum_{t=1}^my_t \in \mathcal A^+P_n.$ 

A monomial $x$ is said to be admissible if it is not inadmissible.
\end{defns}

Obviously, the set of all the admissible monomials of degree $d$ in $P_n$ is a minimal set of $\mathcal{A}$-generators for $P_n$ in degree $d$. 

\begin{thms}[See Kameko \cite{ka}, Sum \cite{su2}]\label{dlcb1}  
 Let $x, y, w$ be monomials in $P_n$ such that $\omega_i(x) = 0$ for $i > r>0$, $\omega_s(w) \ne 0$ and   $\omega_i(w) = 0$ for $i > s>0$.

{\rm i)}  If  $w$ is inadmissible, then  $xw^{2^r}$ is also inadmissible.

{\rm ii)}  If $w$ is strictly inadmissible, then $wy^{2^{s}}$ is also strictly inadmissible.
\end{thms}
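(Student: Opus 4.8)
The plan is to prove both statements by pushing the defining (strict) inadmissibility relation of $w$ through a $2^r$-th, respectively $2^s$-th, power, the only subtlety being how the Cartan formula interacts with such powers. I would first isolate the combinatorics in an order lemma: if $\omega_i(x)=0$ for $i>r$ then every exponent $\nu_k(x)$ is $<2^r$, so in any product $xz^{2^r}$ no carrying occurs between the digits coming from $x$ (positions $0,\ldots ,r-1$) and those coming from $z^{2^r}$ (positions $\geqslant r$); consequently $\omega(xz^{2^r})$ is the concatenation $(\omega(x),\mathrm{shift}_r\,\omega(z))$ and $\sigma(xz^{2^r})=\sigma(x)+2^r\sigma(z)$. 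From this, $z<w$ forces $xz^{2^r}<xw^{2^r}$, and the symmetric statement gives $zy^{2^s}<wy^{2^s}$ whenever $\omega_i(w)=\omega_i(z)=0$ for $i>s$. This lemma absorbs all the weight/exponent bookkeeping used below.

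Part (ii) is then almost immediate from Proposition \ref{mdcb4}. By the definition of strict inadmissibility, $w\simeq_s\sum_t z_t$ for monomials $z_t<w$, where $s=\max\{i:\omega_i(w)\neq0\}$; since $\omega_i(w)=0\leqslant1$ for $i>s$, Proposition \ref{mdcb4}(i) (taken with $t=s$) applies and yields $wy^{2^s}\simeq_s\sum_t z_ty^{2^s}$. The order lemma gives $z_ty^{2^s}<wy^{2^s}$, and as $s'=\max\{i:\omega_i(wy^{2^s})\neq0\}\geqslant s$ we have $\mathcal A^+_sP_n\subseteq\mathcal A^+_{s'}P_n$; hence this relation exhibits $wy^{2^s}$ as strictly inadmissible.

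Part (i) is harder because $w$ is only inadmissible, so its hit part may involve arbitrarily high squares and Proposition \ref{mdcb4} no longer applies. I would write $w=\sum_t z_t+\sum_j Sq^j(h_j)$ with $z_t<w$, raise to the $2^r$-th power using $(a+b)^{2^r}=a^{2^r}+b^{2^r}$ and $(Sq^jh)^{2^r}=Sq^{2^rj}(h^{2^r})$, and multiply by $x$ to get $xw^{2^r}=\sum_t xz_t^{2^r}+\sum_j xSq^{2^rj}(h_j^{2^r})$. Using the Cartan formula together with the vanishing $Sq^{k}(z^{2^r})=0$ unless $2^r\mid k$, each summand becomes
\[
xSq^{2^rj}(h_j^{2^r})=Sq^{2^rj}(xh_j^{2^r})+\sum_{l\geqslant1}Sq^{2^rl}(x)\,(Sq^{\,j-l}h_j)^{2^r},
\]
whose leading term is hit. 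By the order lemma $xz_t^{2^r}<xw^{2^r}$, so everything reduces to showing that each monomial appearing in a correction term $Sq^{2^rl}(x)(Sq^{\,j-l}h_j)^{2^r}$ with $l\geqslant1$ lies strictly below $xw^{2^r}$.

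Bounding these correction terms is the main obstacle, and it is exactly where the hypothesis $\omega_i(x)=0$ for $i>r$ is used. Since every exponent of $x$ is $<2^r$, a Lucas-type analysis of $Sq^{2^rl}(x)$ with $l\geqslant1$ shows that each resulting exponent is $a_k+c_k$ with $c_k\preceq a_k$ and $\sum_k c_k=2^rl\geqslant2^r$; adding such $c_k$ necessarily produces carries that move weight into strictly higher index positions, lowering the left-lexicographic weight vector. Granting the resulting estimate $\omega(\mu)<\omega(xw^{2^r})$ for every correction monomial $\mu$, we are done: $xw^{2^r}$ is a sum of monomials strictly below it together with a hit polynomial, which is exactly the definition of inadmissibility. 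The only delicate input is this weight estimate for $Sq^{2^rl}(x)$; everything else is the Frobenius/Cartan manipulation and the order lemma of the first paragraph.
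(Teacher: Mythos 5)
The paper does not prove Theorem \ref{dlcb1}: it is quoted from Kameko \cite{ka} and Sum \cite{su2}, so there is no in-paper argument to compare yours against. Judged on its own terms, your proof is essentially correct and is, in substance, the standard Cartan-formula argument for results of this kind. Your treatment of part (ii) is a legitimate shortcut inside this paper's logical setup: Proposition \ref{mdcb4} is imported from \cite{su5} with an independent proof, so deducing (ii) from it is not circular, and it does make (ii) nearly immediate. Part (i) is where the real content lies, since the hit part of a merely inadmissible $w$ can involve arbitrarily high squares; your Frobenius/Cartan expansion $x\,Sq^{2^rj}(h_j^{2^r}) = Sq^{2^rj}(xh_j^{2^r}) + \sum_{l\geqslant 1}Sq^{2^rl}(x)(Sq^{j-l}h_j)^{2^r}$ and the weight estimate for the correction terms are exactly the right mechanism, and the hypothesis $\omega_i(x)=0$ for $i>r$ enters precisely where you say it does.

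Two points need tightening; both are patchable in a line or two. (a) In part (ii) your order lemma assumes $\omega_i(z)=\omega_i(w)=0$ for $i>s$, but the monomials $z_t<w$ produced by strict inadmissibility need not satisfy $\omega_i(z_t)=0$ for $i>s$: having the same degree as $w$, a monomial $z_t$ may have $\omega(z_t)<\omega(w)$ with nonzero entries beyond position $s$. Either first discard every $z_t$ with $\omega(z_t)<\omega(w)$ into the summand $P_n^-(\omega(w))$ already built into $\simeq_s$, after which the surviving $z_t$ satisfy $\omega(z_t)=\omega(w)$ and your lemma applies; or observe directly that when $\omega(z)<\omega(w)$ and $\omega_i(w)=0$ for $i>s$, the first discrepancy of the weight vectors must occur at a position $\leqslant s$, and in those positions $\omega(zy^{2^s})$ and $\omega(wy^{2^s})$ agree with $\omega(z)$ and $\omega(w)$, so $zy^{2^s}<wy^{2^s}$ holds anyway. (b) The estimate you ``grant'' in part (i) is true, and your sketch completes as follows: a monomial $\mu_1$ of $Sq^{2^rl}(x)$ with $l\geqslant 1$ has exponents $a_k+c_k$ where each $c_k$ is a binary submask of $a_k<2^r$ and $\sum_k c_k=2^rl>0$; if $p<r$ is the lowest bit position occupied by some $c_k$, then the bits below $p$ of every $a_k+c_k$ agree with those of $a_k$, while at bit $p$ each variable with that bit set in $c_k$ loses it to a carry; hence $\omega_i(\mu_1)=\omega_i(x)$ for $i\leqslant p$ and $\omega_{p+1}(\mu_1)<\omega_{p+1}(x)$. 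Since adding a multiple of $2^r$ to an exponent cannot change its bits below $r$, the first $r$ weight entries of any monomial $\mu$ of $Sq^{2^rl}(x)(Sq^{j-l}h_j)^{2^r}$ are those of $\mu_1$, and the first $r$ weight entries of $xw^{2^r}$ are those of $x$; as the strict drop occurs at position $p+1\leqslant r$, the left-lexicographic comparison is decided there, giving $\omega(\mu)<\omega(xw^{2^r})$ as required. With these two repairs the proof is complete.
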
 

Now, we recall a result of Singer \cite{si2} on the hit monomials in $P_n$. 

\begin{defns}[see Singer \cite{si2}]\label{spi}  A monomial $z$ in $P_n$   is called a spike if $\nu_j(z)=2^{s_j}-1$ for $s_j$ a non-negative integer and $j=1,2, \ldots , n$. If $z$ is a spike with $s_1>s_2>\ldots >s_{r-1}\geqslant s_r>0$ and $s_j=0$ for $j>r,$ then it is called the minimal spike.
\end{defns}

For a positive integer $d$, by $\mu(d)$ one means the smallest number $r$ for which it is possible to write $d = \sum_{1\leqslant i\leqslant r}(2^{s_i}-1),$ where $s_i >0$. 
 Singer showed in \cite{si2} that if $\mu(d) \leqslant n$, then there exists uniquely a minimal spike of degree $n$ in $P_n$. 

\begin{lems}[See \cite{sp}]\label{bdbs}
 All the spikes in $P_n$ are admissible and their weight vectors are weakly decreasing. 
Furthermore, if a weight vector $\omega$ is weakly decreasing and $\omega_1 \leqslant n$, then there is a spike $z$ in $P_n$ such that $\omega (z) = \omega$. 
\end{lems}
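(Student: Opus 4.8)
The lemma contains three assertions, and I would treat them in order of increasing difficulty. Writing a spike as $z=\prod_{j=1}^{n}x_j^{2^{s_j}-1}$, the dyadic digit $\alpha_{i-1}(\nu_j(z))$ equals $1$ precisely for $1\leqslant i\leqslant s_j$, so
\[
\omega_i(z)=\sum_{j=1}^{n}\alpha_{i-1}(\nu_j(z))=\bigl|\{j:\ s_j\geqslant i\}\bigr|,
\]
which is non-increasing in $i$; hence $\omega(z)$ is weakly decreasing, and $\omega_1(z)$ counts the variables occurring in $z$, so automatically $\omega_1(z)\leqslant n$. Conversely, given a weakly decreasing $\omega$ with $\omega_1\leqslant n$, I would recover a spike by partition conjugation: set $s_j=\bigl|\{i\geqslant 1:\ \omega_i\geqslant j\}\bigr|$ for $1\leqslant j\leqslant n$. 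These exponents are weakly decreasing and vanish once $j>\omega_1$, so they fit into the $n$ available variables, and the same digit count gives $\omega\bigl(\prod_j x_j^{2^{s_j}-1}\bigr)=\omega$ together with $\deg z=\sum_i 2^{i-1}\omega_i=\deg\omega$.

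The real content is that every spike is admissible, and here I would argue by induction on $n$ and, for fixed $n$, on the degree, exploiting the doubling structure of spikes. First I would dispose of variables that do not occur: if $s_j=0$ for some $j$, then $z$ lies in the polynomial subalgebra on the remaining variables, and I would reduce admissibility in $P_n$ to admissibility in that smaller algebra by restricting a hypothetical hit relation to fewer variables. It therefore suffices to treat a spike in which every variable occurs, i.e.\ $s_j\geqslant 1$ for all $j$. In that case each exponent $2^{s_j}-1$ is odd, and
\[
z=\Bigl(\prod_{j=1}^{n}x_j\Bigr)(z')^{2},\qquad z'=\prod_{j=1}^{n}x_j^{2^{s_j-1}-1},
\]
where $z'$ is again a spike, of strictly smaller degree $(\deg z-n)/2$; by the inductive hypothesis $z'$ is admissible.

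To pass from the admissibility of $z'$ to that of $z$ I would use Kameko's squaring operation $Sq^0_*$, which on classes sends $[(x_1\cdots x_n)w^{2}]$ to $[w]$ and annihilates monomials not of this shape. Assuming $z$ inadmissible, I would write $z\equiv\sum_t y_t$ with $y_t<z$; applying $Sq^0_*$ carries $[z]$ to $[z']$ and each surviving $[y_t]$ to the class of its halved monomial, and one checks these images are strictly smaller than $z'$ in the order of Definition \ref{defn3}, contradicting the admissibility of $z'$. The transport of such a relation through the doubling is governed by Proposition \ref{mdcb4}(ii) (taking $x=x_1\cdots x_n$, so that $\omega_i(x)=0$ for $i>1$), while Singer's criterion recalled in this section is available to discard those $y_t$ whose weight vector lies strictly below that of the minimal spike. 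The base cases ($n=1$, or degrees so small that $z$ is the only monomial of its degree in the occurring variables) then close the induction.

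I expect the main obstacle to be exactly this upward step. Proposition \ref{mdcb4} and Theorem \ref{dlcb1} propagate \emph{in}admissibility and equivalences \emph{up} the doubling, but the argument above needs the reverse implication, namely that passing from $z$ down to $z'$ via $Sq^0_*$ neither destroys the strict inequalities $y_t<z$ nor introduces spurious congruences among the relevant lower monomials. Making this precise amounts to verifying that $Sq^0_*$ is injective on the span of the admissible classes in play and is compatible with the ordering $<$; securing that compatibility is the delicate point, after which the induction is routine.
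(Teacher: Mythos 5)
The paper does not prove Lemma \ref{bdbs} at all; it is quoted from \cite{sp}, so your proposal has to be judged against the standard argument rather than a proof in the text. Your treatment of the two combinatorial assertions is correct: $\omega_i(z)=\lvert\{j: s_j\geqslant i\}\rvert$ is weakly decreasing, and the conjugate partition $s_j=\lvert\{i:\omega_i\geqslant j\}\rvert$ produces a spike realizing any weakly decreasing $\omega$ with $\omega_1\leqslant n$. Your Kameko-style induction for admissibility is also essentially right, but your closing paragraph misdiagnoses what remains to be checked, and that hedging is the only real defect. Injectivity of $\widetilde{Sq}^0_*$ is never needed anywhere in your argument --- fortunately, since Kameko's homomorphism is far from injective in general (Theorem \ref{dlmd2} gives an isomorphism only under the hypothesis $\mu(2m+n)=n$). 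Only two elementary facts are required. First, $\psi(\mathcal A^+P_n)\subseteq \mathcal A^+P_n$: write an element of $\mathcal A^+P_n$ as $\sum_{j>0}Sq^j(g_j)$ and use $\psi Sq^{2i}=Sq^i\psi$, $\psi Sq^{2i+1}=0$. Second, order-compatibility, which is a one-line digit computation rather than a delicate point: if $y=X_\emptyset w^2$, then $\nu_j(y)=2\nu_j(w)+1$, so $\alpha_0(\nu_j(y))=1$ and $\alpha_i(\nu_j(y))=\alpha_{i-1}(\nu_j(w))$ for $i\geqslant 1$; hence $\omega(y)=(n,\omega_1(w),\omega_2(w),\ldots)$, and $\sigma(y)$ is obtained from $\sigma(w)$ by the coordinatewise strictly increasing map $a\mapsto 2a+1$, which preserves left-lexicographic order. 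Therefore $y_t=X_\emptyset w_t^2<z=X_\emptyset (z')^2$ in the sense of Definition \ref{defn3} if and only if $w_t<z'$. Applying $\psi$ to an inadmissibility relation $z+\sum_t y_t\in\mathcal A^+P_n$ then yields $z'+\sum_{t\in T'}w_t\in\mathcal A^+P_n$ with every $w_t<z'$, contradicting the admissibility of $z'$ (if $T'=\emptyset$ this says $z'$ is hit, equally a contradiction). Note also that your induction bottoms out trivially at the monomial $1$, which is admissible because $(\mathcal A^+P_n)_0=0$; and your side appeals to Proposition \ref{mdcb4}(ii) and to Singer's criterion are unnecessary, since the argument only travels \emph{down} the doubling, never up.

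This is, however, a genuinely different route from the classical one underlying \cite{sp}. The direct proof observes that, by the Cartan formula and Lucas' theorem, if $\binom{a}{b}$ is odd and $a+b=2^s-1$ then $b=0$; hence a spike never occurs as a term of $Sq^j(g)$ for any $j>0$ and any polynomial $g$. Consequently, in a relation $z+\sum_t y_t=\sum_{j>0}Sq^j(g_j)$ with monomials $y_t\ne z$, the spike $z$ survives on the left-hand side but cannot appear on the right-hand side, so no such relation exists and $z$ is admissible --- one stroke, no induction. Your approach buys a self-contained derivation from the operator identities defining Kameko's map alone, at the cost of a double induction (on the number of occurring variables and on degree) and the reduction steps needed to run it; the classical argument is shorter and is the one you should cite or reproduce if brevity matters.
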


The following is a criterion for the hit monomials in $P_n$.

\begin{thms}[See Singer~\cite{si2}]\label{dlsig} Suppose $x \in P_n$ is a monomial of degree $d$, where $\mu(d) \leqslant n$. Let $z$ be the minimal spike of degree $d$. If $\omega(x) < \omega(z)$, then $x$ is hit.
\end{thms}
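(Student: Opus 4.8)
The plan is to prove the statement by a descent on weight vectors, keeping everything inside a fixed degree $d$ with $\mu(d)=:r\leqslant n$. Since $(QP_n)_d\cong\bigoplus_{\deg\omega=d}QP_n(\omega)$ and a monomial carries a single weight vector, it suffices to show $[x]_{\omega(x)}=0$ in $QP_n(\omega(x))$, i.e. that $x$ is hit modulo $P_n^-(\omega(x))$; the monomials of the resulting correction term lie in $P_n^-(\omega(x))$, so they have strictly smaller weight vector and are still $<\omega(z)$, and hence are disposed of by downward induction on the finitely many weight vectors of degree $d$. The structural tools are the $\chi$-trick $u\,Sq^k(v)\equiv\chi(Sq^k)(u)\,v\pmod{\mathcal A^+P_n}$ for $k\geqslant1$, the identity $Sq^{\deg m}(m)=m^2$ for a monomial $m$, and Proposition \ref{mdcb4} for bookkeeping inside the weight filtration. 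By Lemma \ref{bdbs} the weight vector $\omega(z)$ is weakly decreasing, and since all exponents of the spike $z$ are odd we have $\omega_1(z)=r$. Writing $x=X_{\mathbb J_0(x)}\,w^2$, where $X_{\mathbb J_0(x)}$ is the square-free product of the odd-power variables, of degree $\omega_1(x)$, and $\omega(w)=(\omega_2(x),\omega_3(x),\ldots)$, I split according to whether $\omega_1(x)=r$ or $\omega_1(x)<r$.

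First suppose $\omega_1(x)=\omega_1(z)=r$, so that $\omega(x)$ and $\omega(z)$ first differ beyond the first coordinate. The same factorisation applied to $z$ produces $z=X_{\mathbb J_0(z)}\,w_z^2$ in which $w_z$ is exactly the minimal spike of degree $e:=(d-r)/2$, with $\mu(e)\leqslant r\leqslant n$, and the hypothesis $\omega(x)<\omega(z)$ becomes $\omega(w)<\omega(w_z)$. By the induction on degree $w$ is hit, so $w\simeq_r0$. Since $X_{\mathbb J_0(x)}$ is square-free we have $\omega_i(X_{\mathbb J_0(x)})=0$ for $i>1$, so Proposition \ref{mdcb4}(ii) applied with $s=1$ upgrades the vanishing of $w$ to $X_{\mathbb J_0(x)}\,w^2\equiv_{\omega(x)}0$; that is, $[x]_{\omega(x)}=0$, which is what we want.

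The remaining case $\omega_1(x)<r=\mu(d)$ is the crux and the step I expect to be the main obstacle: here $x$ has strictly fewer odd exponents than the number of spike summands that the degree $d$ forces, and the factorisation $x=X_{\mathbb J_0(x)}\,w^2$ no longer matches any sub-spike, so no degree-descent is available. Instead I would write $w^2=Sq^{e}(w)$ with $e=\deg w$ and apply the $\chi$-trick to obtain $x\equiv\chi(Sq^{e})(X_{\mathbb J_0(x)})\,w\pmod{\mathcal A^+P_n}$, trading $x$ for the action of a conjugate square on the square-free monomial $X_{\mathbb J_0(x)}$ of degree $\omega_1(x)<\mu(d)$. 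The point is that this deficiency makes $\chi(Sq^{e})(X_{\mathbb J_0(x)})$ degenerate: in the extreme it vanishes outright (for instance $\chi(Sq^{e})(x_i)=0$ unless $e+1$ is a power of $2$), and in general it should produce only monomials whose weight vectors are strictly smaller, feeding the downward induction on weight vectors. Turning this heuristic into a proof requires exploiting the $2$-adic arithmetic of $\mu(d)$ and the minimal spike to control $\chi(Sq^{e})$ on square-free monomials, and this is the portion of the argument demanding the most care; every other configuration reduces to it or to a lower degree.
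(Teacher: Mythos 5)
The paper itself gives no proof of Theorem \ref{dlsig} (it is quoted from Singer's paper), so your argument has to stand entirely on its own. Your case division and your treatment of the case $\omega_1(x)=\mu(d)$ are essentially sound: every exponent of the minimal spike $z$ has the form $2^{s_j}-1$ with $s_j>0$, so $\omega_1(z)=\mu(d)$, and stripping the square-free part of $z$ and halving produces a spike whose exponents again satisfy the condition of Definition \ref{spi}; by Singer's uniqueness of the minimal spike (quoted in the paper just before Lemma \ref{bdbs}) this halved spike \emph{is} the minimal spike of degree $e=(d-\mu(d))/2$, so the degree induction applies to $w$, and Proposition \ref{mdcb4}(ii) plus an upward induction over the finitely many weight vectors of degree $d$ absorbs the $P_n^-(\omega(x))$ error terms, exactly as you say. (One cosmetic slip: "$w$ is hit" only yields $w\simeq_{(s,\omega(w))}0$ for \emph{some} $s$, not for $s=r$; that is all Proposition \ref{mdcb4}(ii) needs.)

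The genuine gap is the case you yourself flag as unproved: $\omega_1(x)<\mu(d)$. That case is not a technical leftover, it is the substance of the theorem, and as written you have only a plan for it, together with a misdirected expectation: no descent on weight vectors occurs there, because the relevant polynomial vanishes identically — your "extreme" alternative is in fact the general situation. The missing statement is Wood's vanishing lemma: for a product $X=x_{j_1}x_{j_2}\cdots x_{j_a}$ of $a$ distinct variables and $e\geqslant 1$, one has $\chi(Sq^{e})(X)=0$ unless $\mu(2e+a)\leqslant a$. This follows from the facts that the total conjugate operation $\chi(Sq)=\sum_{j\geqslant 0}\chi(Sq^{j})$ is multiplicative (the antipode is a coalgebra anti-morphism and the coproduct of $Sq^{k}$ is symmetric) and that $\chi(Sq)(x_i)=\sum_{s\geqslant 0}x_i^{2^{s}}$, since $Sq\big(\sum_{s\geqslant 0}x_i^{2^{s}}\big)=\sum_{s\geqslant 0}\big(x_i^{2^{s}}+x_i^{2^{s+1}}\big)=x_i$. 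Hence
$$\chi(Sq^{e})(X)=\sum_{2^{t_1}+2^{t_2}+\cdots+2^{t_a}=e+a}x_{j_1}^{2^{t_1}}x_{j_2}^{2^{t_2}}\cdots x_{j_a}^{2^{t_a}},$$
which is nonzero if and only if $e+a$ is a sum of $a$ powers of $2$; writing $e+a=\sum_u 2^{t_u}$ gives $2e+a=\sum_u(2^{t_u+1}-1)$, and a routine binary-arithmetic argument gives the converse, so nonvanishing is equivalent to $\mu(2e+a)\leqslant a$. In your situation $2e+a=d$ and $a=\omega_1(x)<\mu(d)$, so $\chi(Sq^{e})(X_{\mathbb J_0(x)})=0$, and the $\chi$-trick gives $x=X_{\mathbb J_0(x)}\,Sq^{e}(w)\equiv\chi(Sq^{e})(X_{\mathbb J_0(x)})\,w=0$ modulo $\mathcal A^{+}P_n$: the monomial is hit outright, with no induction at all. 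This is the same mechanism that proves Theorem \ref{dlmd1}, and it is what Singer's cited proof rests on. Until this lemma is stated and proved, your argument establishes the theorem only for monomials whose square-free part already has length $\mu(d)$, i.e., it proves none of the cases that make the statement nontrivial.
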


From this theorem we see that if $z$ is the minimal spike of degree $d$, $\omega = \omega(z)$ and $f \in (P_n)_d$ then $[f]_\omega = [f]$.  This result also implies a result of Wood, which originally is a conjecture of Peterson~\cite{pe}.
 
\begin{thms}[See Wood~\cite{wo}]\label{dlmd1} 
If $\mu(d) > n$, then $(QP_n)_d = 0$.
\end{thms}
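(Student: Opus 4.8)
The plan is to derive Wood's theorem directly from Singer's hit criterion (Theorem~\ref{dlsig}), exactly as the text hints when it says that ``this result also implies a result of Wood.'' The only genuine obstacle is that Theorem~\ref{dlsig} requires a minimal spike of degree $d$ to exist in the ambient polynomial algebra, and this forces $\mu(d)\leqslant n$; since we are in the regime $\mu(d)>n$, there is no spike of degree $d$ in $P_n$ at all, so the criterion cannot be quoted inside $P_n$. I would circumvent this by temporarily enlarging the number of variables and then descending again.

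Concretely, set $N=\mu(d)$ and regard $P_n$ as the subalgebra of $P_N=\mathbb F_2[x_1,\ldots ,x_N]$ generated by $x_1,\ldots ,x_n$. Now $\mu(d)=N\leqslant N$, so by Singer's existence result (stated just after Definition~\ref{spi}, and compatible with Lemma~\ref{bdbs}) there is a minimal spike $z$ of degree $d$ in $P_N$. Its exponents are $2^{s_j}-1$ with all $s_j>0$, hence every exponent is odd, and since $\omega_1$ counts exactly the variables of odd exponent we get $\omega_1(z)=\mu(d)=N$. On the other hand, any monomial $x$ of degree $d$ lying in $P_n$ involves at most $n$ of the variables, so $\omega_1(x)\leqslant n<N=\omega_1(z)$. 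The two weight vectors therefore already differ in their first coordinate, and in the left lexicographic order this gives $\omega(x)<\omega(z)$. Applying Theorem~\ref{dlsig} inside $P_N$ then shows that $x$ is hit in $P_N$.

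It remains to descend from $P_N$ back to $P_n$, and this is the step that carries the real content. I would use the retraction $\pi\colon P_N\to P_n$ defined by $\pi(x_i)=x_i$ for $i\leqslant n$ and $\pi(x_i)=0$ for $i>n$. Because setting variables equal to zero is compatible with the Cartan formula (on any monomial that involves some $x_i$ with $i>n$, every term of $Sq^k$ still involves that variable, so both $\pi Sq^k$ and $Sq^k\pi$ vanish on it), the map $\pi$ is a homomorphism of $\mathcal A$-modules. Thus if $x=\sum_{j>0}Sq^{j}(g_j)$ in $P_N$, applying $\pi$ and using $\pi(x)=x$ (as $x$ already lies in $P_n$) yields $x=\sum_{j>0}Sq^{j}(\pi(g_j))\in\mathcal A^+P_n$. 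Hence $x$ is hit in $P_n$ as well.

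Since $x$ was an arbitrary monomial of degree $d$ in $P_n$ and such monomials span $(P_n)_d$, every element of $(P_n)_d$ is hit, i.e. $(QP_n)_d=0$. The single point to be careful about is precisely the $\mathcal A$-linearity of $\pi$, which is what guarantees that being hit in the larger algebra is inherited by $P_n$; everything else reduces to Singer's criterion together with the elementary observation that a degree-$d$ monomial in $P_n$ can have at most $n$ variables of odd exponent, whereas the minimal spike in $P_{\mu(d)}$ has exactly $\mu(d)>n$ of them.
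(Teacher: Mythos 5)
Your proof is correct. Note first that the paper does not actually prove this theorem: it is quoted from Wood's paper \cite{wo}, and the text merely remarks, after Theorem \ref{dlsig}, that Singer's criterion ``implies'' it. Your argument is a genuine completion of that remark, and the completion is not entirely trivial: as you observe, Theorem \ref{dlsig} cannot be invoked inside $P_n$ when $\mu(d)>n$, because no minimal spike of degree $d$ exists there. Your two bridging ideas --- embedding $P_n$ into $P_N$ with $N=\mu(d)$, and descending along the $\mathcal A$-linear retraction $\pi$ that kills $x_{n+1},\ldots ,x_N$ --- are both sound; indeed the second ingredient is already implicit in the paper's own maps $p_{(i;\emptyset)}$, which set a variable to zero and are stated to be $\mathcal A$-module homomorphisms. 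Two small points. First, your assertion that every exponent of the minimal spike $z\in P_N$ is odd tacitly assumes $z$ has no zero exponents; this needs the one-line observation that if $z$ has $r$ nonzero exponents then $d$ is a sum of $r$ terms of the form $2^{s}-1$ with $s>0$, so $r\geqslant \mu(d)=N$ and hence $r=N$ (in fact the weaker inequality $\omega_1(z)=r\geqslant \mu(d)>n\geqslant \omega_1(x)$ is all your argument uses). Second, be aware of what the derivation does and does not accomplish: within this paper's framework, where Singer's theorem is available as a black box, your reduction is perfectly legitimate and economical; but Wood's original proof is an independent and more elementary argument that predates Singer's criterion, so your argument is a logical reduction of one quoted theorem to another rather than a self-contained proof from first principles.
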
 

\subsection{Kameko's homomorphism}\

\medskip
One of the main tools in the study of the hit problem is  Kameko's homomorphism 
$\widetilde{Sq}^0_*: QP_n \to QP_n$. 
This homomorphism is induced by the $\mathbb F_2$-linear map $\psi:P_n\to P_n$, given by
$$
\psi(x) = 
\begin{cases}y, &\text{if }x=x_1x_2\ldots x_ny^2,\\  
0, & \text{otherwise,} \end{cases}
$$
for any monomial $x \in P_n$. Note that $\psi$ is not an $\mathcal A$-homomorphism. However, 
$\psi Sq^{2i} = Sq^{i}\psi,$ and $\psi Sq^{2i+1} = 0$ for any non-negative integer $i$. 

\begin{thms}[Kameko~\cite{ka}]\label{dlmd2} 
Let $m$ be a positive integer. If $\mu(2m+n)=n$, then 
$\widetilde{Sq}^0_*= (\widetilde{Sq}^0_*)_{(n,m)}: (QP_n)_{2m+n}\to (QP_n)_m$ 
is an isomorphism of the $GL_n$-modules.
\end{thms}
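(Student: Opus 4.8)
The plan is to establish the theorem in three stages: well-definedness together with surjectivity, an injective count of admissible monomials that pins down the dimension, and finally the $GL_n$-equivariance, which I expect to be the delicate point. First I would record that $\psi$ descends to $QP_n$. Since $\psi Sq^{2i+1}=0$ and $\psi Sq^{2i}=Sq^i\psi$ for all $i\geqslant 0$, any $\psi Sq^{j}(h)$ with $j\geqslant 1$ lies in $\mathcal A^+P_n$ (odd $j$ gives $0$, while $j=2i$ with $i\geqslant 1$ gives $Sq^i\psi(h)$ with $i\geqslant 1$). Hence $\psi(\mathcal A^+P_n)\subseteq \mathcal A^+P_n$, so $\psi$ induces $\widetilde{Sq}^0_*$; and since a monomial $x_1x_2\cdots x_ny^2$ of degree $2m+n$ has $\deg y=m$, the induced map is $(QP_n)_{2m+n}\to (QP_n)_m$. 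Surjectivity is immediate and uses no hypothesis on $d$: for any monomial $z$ of degree $m$ one has $\psi(x_1x_2\cdots x_nz^2)=z$, so $\widetilde{Sq}^0_*$ is onto, whence $\dim(QP_n)_m\leqslant\dim(QP_n)_{2m+n}$.

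The core of the argument is the reverse inequality, proved on the bases of admissible monomials. Put $d=2m+n$ and let $z$ be the minimal spike of degree $d$. Because $\mu(d)=n$, all $n$ exponents of $z$ have the form $2^{s_j}-1$ with $s_j>0$, hence are odd, so $\omega_1(z)=n$, the largest possible first weight coordinate in $P_n$. Let $x$ be any admissible monomial of degree $d$. As a member of the minimal generating set it represents a nonzero class, so it is not hit, and Theorem~\ref{dlsig} forbids $\omega(x)<\omega(z)$; in the total left lexicographic order this gives $\omega(x)\geqslant\omega(z)$, and since $\omega_1(x)\leqslant n=\omega_1(z)$ this forces $\omega_1(x)=n$. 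Thus every exponent of $x$ is odd, so $x=x_1x_2\cdots x_ny^2$ for a unique monomial $y$ of degree $m$, i.e. $\psi(x)=y$. Moreover $y$ is admissible: were it inadmissible, Theorem~\ref{dlcb1}(i), applied with $x_1x_2\cdots x_n$ (whose weight vector is $(n)$, vanishing above degree $1$, so $r=1$) as left factor and $y$ as $w$, would make $x_1x_2\cdots x_ny^2$ inadmissible, contradicting the choice of $x$. Therefore $\psi$ maps the admissible monomials of degree $d$ injectively into those of degree $m$, giving $\dim(QP_n)_d\leqslant\dim(QP_n)_m$. Combined with surjectivity this yields equality of these finite dimensions, so $\widetilde{Sq}^0_*$ is a linear isomorphism.

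It remains to show $\widetilde{Sq}^0_*$ is $GL_n$-equivariant, and I expect this to be the main obstacle precisely because $\psi$ itself is not $GL_n$-equivariant. The symmetric part is painless: $\psi$ is invariant under permutations of $x_1,\ldots,x_n$, so $\widetilde{Sq}^0_*$ commutes with the generators $\rho_1,\ldots,\rho_{n-1}$ of $\Sigma_n$. The whole difficulty is concentrated in the remaining generator $\rho_n$, where $\rho_n(x_1)=x_1+x_2$; here one must verify $\psi\rho_n(x)\equiv\rho_n\psi(x)$ for every monomial $x$. Expanding $\rho_n(x)$ by the binomial and Cartan formulas and comparing the terms with all exponents odd on the two sides, the discrepancy consists of monomials that one checks to be hit, so the congruence holds modulo $\mathcal A^+P_n$; this verification, rather than anything in the dimension count, is the technical heart. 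More conceptually, and this is the route of Kameko~\cite{ka}, $\widetilde{Sq}^0_*$ is the transpose of the Frobenius squaring map on $H_*(BV;\mathbb F_2)$ with $V=\mathbb F_2^n$, which is natural in $V$ and hence $GL(V)=GL_n$-equivariant. Either way, $\widetilde{Sq}^0_*$ is an isomorphism of $GL_n$-modules, completing the proof.
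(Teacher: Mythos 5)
First, a point of comparison: the paper does not prove this statement at all --- Theorem \ref{dlmd2} is quoted from Kameko's thesis \cite{ka} --- so there is no in-paper proof to measure your argument against; it must stand on its own. Its linear-algebra core does stand: well-definedness of $\widetilde{Sq}^0_*$ from $\psi Sq^{2i}=Sq^i\psi$ and $\psi Sq^{2i+1}=0$; surjectivity from $\psi(x_1x_2\cdots x_nz^2)=z$; and the reverse dimension inequality via admissible monomials, where the hypothesis $\mu(2m+n)=n$ gives $\omega_1(z)=n$ for the minimal spike $z$, Theorem \ref{dlsig} forces $\omega_1(x)=n$ (so all exponents of an admissible $x$ are odd), and Theorem \ref{dlcb1}(i) with $r=1$ shows $\psi(x)$ is again admissible. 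That correctly yields a linear isomorphism.

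The genuine gap is the $GL_n$-equivariance, which you flag as the heart of the matter and then do not establish. Your first route only asserts that ``the discrepancy consists of monomials that one checks to be hit''; that assertion is precisely what would need proof, and nothing is checked. Your second route is incorrect: there is no natural ``Frobenius squaring map'' on $H_*(BV;\mathbb F_2)$ whose transpose is $\widetilde{Sq}^0_*$. The Pontryagin-ring Frobenius on $H_*(BV)$ (a divided power algebra) vanishes in positive degrees, and the transpose of the cohomological Frobenius $f\mapsto f^2$ is the degree-halving Verschiebung, not Kameko's map; the $GL_n$-equivariance of the dual operation on homology is exactly equivalent to the statement you are proving, so invoking ``naturality in $V$'' is circular. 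The good news is that your first route succeeds, and more cheaply than you anticipate: $\psi$ commutes with $\rho_n$ \emph{exactly}, not merely modulo hit elements. Indeed, for $x=x_1^{a_1}x_2^{a_2}\cdots x_n^{a_n}$ expand
\[
\rho_n(x)=\sum_{k}\binom{a_1}{k}\,x_1^{a_1-k}x_2^{a_2+k}x_3^{a_3}\cdots x_n^{a_n}.
\]
If all $a_i=2b_i+1$ are odd, only the terms with $k=2j$ even survive $\psi$, and Lucas' theorem gives $\binom{2b_1+1}{2j}\equiv\binom{b_1}{j} \pmod 2$, whence $\psi\rho_n(x)=(x_1+x_2)^{b_1}x_2^{b_2}\cdots x_n^{b_n}=\rho_n\psi(x)$. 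If some $a_i$ with $i\geqslant 3$ is even, or exactly one of $a_1,a_2$ is even, a parity check shows no term survives $\psi$, matching $\rho_n\psi(x)=0$; and if $a_1,a_2$ are both even, the surviving terms require $k$ odd, where $\binom{a_1}{k}\equiv 0 \pmod 2$. Since $\psi$ obviously commutes with $\rho_1,\ldots,\rho_{n-1}$, it commutes with every generator of $GL_n$, hence with the whole group action on the nose, and equivariance of $\widetilde{Sq}^0_*$ is immediate. With this computation written out in place of your two sketches, the proof is complete.
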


We end this section by recalling some notations which will be used in the next sections.
We set 
\begin{align*} 
P_n^0 &=\langle\{x=x_1^{a_1}x_2^{a_2}\ldots x_n^{a_n} \ : \ a_1a_2\ldots a_n=0\}\rangle,
\\ P_n^+ &= \langle\{x=x_1^{a_1}x_2^{a_2}\ldots x_n^{a_n} \ : \ a_1a_2\ldots a_n>0\}\rangle. 
\end{align*}

It is easy to see that $P_n^0$ and $P_n^+$ are the $\mathcal{A}$-submodules of $P_n$. Furthermore, we have the following.

\begin{props}\label{2.7} We have a direct summand decomposition of the $\mathbb F_2$-vector spaces
$QP_n =QP_n^0 \oplus  QP_n^+.$
Here $QP_n^0 = \mathbb F_2\otimes_{\mathcal A}P_n^0$ and  $QP_n^+ = \mathbb F_2\otimes_{\mathcal A}P_n^+$.
\end{props}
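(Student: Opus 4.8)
The plan is to deduce the claimed decomposition of $QP_n$ from the corresponding decomposition of $P_n$ itself, using that $\mathbb F_2\otimes_{\mathcal A}(-)$ is an additive (right exact) functor. The statement already asserts that $P_n^0$ and $P_n^+$ are $\mathcal A$-submodules, and it is immediate from the definition that $P_n = P_n^0\oplus P_n^+$ as $\mathbb F_2$-vector spaces, since every monomial lies in exactly one of the two spanning sets according to whether some exponent vanishes or all exponents are positive. So the whole content of the proof is to (i) confirm the submodule claim and (ii) push the vector-space splitting through the quotient by $\mathcal A^+P_n$.

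For (i), I would argue at the level of the Steenrod action on a single monomial $x=x_1^{a_1}\cdots x_n^{a_n}$. By the Cartan formula, $Sq^k(x)=\sum_{k_1+\cdots+k_n=k}\prod_{i=1}^n Sq^{k_i}(x_i^{a_i})$, and on a power of a single degree-one generator one has $Sq^{j}(x_i^{a_i})=\binom{a_i}{j}x_i^{a_i+j}$. The key observation is that $Sq$ never lowers an exponent: every monomial appearing in $Sq^k(x)$ has $i$-th exponent $a_i+k_i\geqslant a_i$. In particular, if $a_i=0$ then $\binom{0}{k_i}=0$ for $k_i>0$, so $x_i$ remains absent, while if $a_i>0$ the exponent stays positive. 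Hence the support of $x$ is stable under the action, which shows at once that $Sq^k$ maps $P_n^0$ into $P_n^0$ and $P_n^+$ into $P_n^+$; thus both are $\mathcal A$-submodules and $P_n = P_n^0\oplus P_n^+$ is a direct sum of $\mathcal A$-modules.

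For (ii), since $\mathcal A^+P_n^0\subseteq P_n^0$ and $\mathcal A^+P_n^+\subseteq P_n^+$ while $P_n^0\cap P_n^+=0$, the action of the augmentation ideal splits as $\mathcal A^+P_n=\mathcal A^+P_n^0\oplus\mathcal A^+P_n^+$. Quotienting the direct sum $P_n=P_n^0\oplus P_n^+$ by this compatible direct sum of subspaces gives $QP_n=(P_n^0/\mathcal A^+P_n^0)\oplus(P_n^+/\mathcal A^+P_n^+)=QP_n^0\oplus QP_n^+$, which is the assertion; equivalently, this is just additivity of $\mathbb F_2\otimes_{\mathcal A}(-)$ applied to $P_n=P_n^0\oplus P_n^+$. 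I do not expect a genuine obstacle here: the one point that must be checked with care is the support-stability of the Steenrod squares in (i), which guarantees that the partition of monomials into those with a zero exponent and those with all exponents positive is preserved by $\mathcal A$. Once that is in hand, the remainder of the argument is purely formal homological algebra.
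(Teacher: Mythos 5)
Your proof is correct. The paper states Proposition \ref{2.7} without proof, remarking only that it is easy to see that $P_n^0$ and $P_n^+$ are $\mathcal A$-submodules; your argument --- support-stability of the Steenrod squares via the Cartan formula and $Sq^j(x_i^{a_i})=\binom{a_i}{j}x_i^{a_i+j}$, followed by additivity of $\mathbb F_2\otimes_{\mathcal A}(-)$ applied to the monomial-basis splitting $P_n=P_n^0\oplus P_n^+$ --- is exactly the standard reasoning the paper leaves implicit.
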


\section{A construction for the generators  of $P_n$ }\label{s3}

\medskip
We denote
$$\mathcal N_n =\{(i;I)\ :\ I=(i_1,i_2,\ldots,i_r),1 \leqslant  i < i_1 <  \ldots < i_r\leqslant  n,\ 0\leqslant r <n\}.$$
Let $(i;I) \in \mathcal N_n$ and $j \in \mathbb N_n$. Denote  by $r = \ell(I)$ the length of $I$, and
$$I\cup j = \begin{cases}I,&  \text { if  } j \in I,\\   
(i_1,\ldots ,i_{t-1},j,i_t,\ldots ,i_r), &\text { if  } i_{t-1}<j<i_t, \ 1 \leqslant t \leqslant r+1.
 \end{cases}$$
Here $i_0 = 0$ and $i_{r+1} = n+1$. 
\begin{defn}[see \cite{su5}] Let $(i;I) \in \mathcal N_n$, $r = \ell(I)$, and let $u$ be an integer with $1 \leqslant  u \leqslant r$. A monomial $x \in P_{n-1}$ is said to be $u$-compatible with $(i;I)$ if all of the following hold:

\smallskip
i) $\nu_{i_1-1}(x)= \nu_{i_2-1}(x)= \ldots = \nu_{i_{(u-1)}-1}(x)=2^{r} - 1$,

ii) $\nu_{i_u-1}(x) > 2^{r} - 1$,

iii) $\alpha_{r-t}(\nu_{i_u-1}(x)) = 1,\ \forall t,\ 1 \leqslant t \leqslant  u$,

iv) $\alpha_{r-t}(\nu_{i_t-1} (x)) = 1,\ \forall t,\ u < t \leqslant r$.
\end{defn}

Clearly, a monomial $x$ can be $u$-compatible with a given $(i;I) \in \mathcal N_n $ for at most one value of $u$. By convention, $x$ is $1$-compatible with $(i;\emptyset)$.

For $1 \leqslant i \leqslant n$, define the homomorphism $f_i: P_{n-1} \to P_n$ of algebras by substituting
$$f_i(x_j) = \begin{cases} x_j, &\text{ if } 1 \leqslant j <i,\\
x_{j+1}, &\text{ if } i \leqslant j <n.
\end{cases}$$

\begin{defn}[see \cite{su5}]\label{dfn1} Let $(i;I) \in \mathcal N_n$, $x_{(I,u)} = x_{i_u}^{2^{r-1}+\ldots + 2^{r-u}}\prod_{u< t \leqslant r}x_{i_t}^{2^{r-t}}$ for $r = \ell(I)>0$, $x_{(\emptyset,1)} = 1$.  For a monomial $x$ in $P_{n-1}$, 
we define the monomial  $\phi_{(i;I)}(x)$ in $P_n$ by setting
$$ \phi_{(i;I)}(x) = \begin{cases} (x_i^{2^r-1}f_i(x))/x_{(I,u)}, &\text{if there exists $u$ such that}\\ &\text{$x$ is $u$-compatible with $(i, I)$,}\\
0, &\text{otherwise.}
\end{cases}$$

Then we have an $\mathbb F_2$-linear map $\phi_{(i;I)}:P_{n-1}\to P_n$.
In particular, $\phi_{(i;\emptyset)} = f_i$.
\end{defn}

It is easy to see that if $\phi_{(i;I)}(x) \ne 0$ then $\omega(\phi_{(i;I)}(x)) = \omega(x)$.

\begin{defn}
For any $(i;I) \in \mathcal N_n$, we define the homomorphism $p_{(i;I)}: P_n \to P_{n-1}$ of algebras by substituting
$$p_{(i;I)}(x_j) =\begin{cases} x_j, &\text{ if } 1 \leqslant j < i,\\
\sum_{s\in I}x_{s-1}, &\text{ if }  j = i,\\  
x_{j-1},&\text{ if } i< j \leqslant n.
\end{cases}$$
Then, $p_{(i;I)}$ is a homomorphism of $\mathcal A$-modules.  In particular, for $I =\emptyset$,  $p_{(i;\emptyset)}(x_i)= 0$  and $p_{(i;I)}(f_i(y)) = y$ for any $y \in P_{n-1}$. 
\end{defn}

For a subset $B \subset P_n$, we denote $[B] = \{[f] : f \in B\}$. If $B \subset P_n(\omega)$, then we set $[B]_\omega = \{[f]_\omega : f \in B\}$. 

We denote
\begin{align*}\Phi^0(B) &= \bigcup_{1\leqslant i \leqslant n}\phi_{(i;\emptyset)}(B) = \bigcup_{1\leqslant i \leqslant n}f_i(B).\\
\Phi^+(B) &= \bigcup_{(i;I)\in \mathcal N_n, 0<\ell(I)\leqslant n-1}\phi_{(i;I)}(B)\setminus P_n^0.\\
\Phi(B) &= \Phi^0(B)\bigcup \Phi^+(B). 
\end{align*} 

 It is easy to see that if $B$ is a minimal set of generators for $\mathcal A$-module $P_{n-1}$ in degree $n$, then  $\Phi^0(B)$  is a minimal set of generators for $\mathcal A$-module $P_n^0$ in degree $n$ and $\Phi^+(B) \subset P_n^+$.

Denote by $B_n(d)$ the set of all the admissible monomials of degree $d$ in $P_n$. Set $B_n^0(d) = B_n(d)\cap P_{n}^0$,  $B_n^+(d) = B_n(d)\cap P_{n}^+$. If $\omega$ is a weight vector of degree $d$, then we set $B_n(\omega) = B_n(d)\cap P_{n}(\omega)$, $B_n^0(\omega) = B_n(\omega)\cap P_{n}^0$, $B_n^+(\omega) = B_n(\omega)\cap P_{n}^+$. 

The following gives our prediction on the relation between the admissible monomials for  the polynomial algebras.

\begin{con}\label{gtom} If $\omega$ is a weight vector, then  
$\Phi(B_{n-1}(\omega)) \subset B_{n}(\omega).$
\end{con}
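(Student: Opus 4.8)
The plan is to separate the statement into a numerical part, which is routine, and an admissibility part, which is the real content. First I would record that each $\phi_{(i;I)}$ preserves both degree and weight vector. The remark following Definition~\ref{dfn1} already gives $\omega(\phi_{(i;I)}(x))=\omega(x)$ whenever $\phi_{(i;I)}(x)\ne 0$, and a one-line dyadic computation handles the degree: using $2^{r-1}+\cdots+2^{r-u}=2^r-2^{r-u}$ and $2^{r-u-1}+\cdots+2^0=2^{r-u}-1$ one gets $\deg x_{(I,u)}=2^r-1$, whence $\deg \phi_{(i;I)}(x)=(2^r-1)+\deg x-(2^r-1)=\deg x$. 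Thus if $x\in B_{n-1}(\omega)$ then $\phi_{(i;I)}(x)$ automatically lies in $P_n(\omega)$ and has degree $\deg\omega$, so the whole conjecture collapses to the single assertion: \emph{if $x$ is admissible in $P_{n-1}$ and $\phi_{(i;I)}(x)\ne 0$, then $\phi_{(i;I)}(x)$ is admissible in $P_n$.} I would prove the contrapositive, converting an inadmissibility relation for $\phi_{(i;I)}(x)$ into one for $x$ by pushing it down through the $\mathcal A$-homomorphism $p_{(i;I)}$.

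For the subset $\Phi^0$, where $\phi_{(i;\emptyset)}=f_i$ merely inserts $x_i$ with exponent $0$, the retraction $p_i:=p_{(i;\emptyset)}$ is an $\mathcal A$-homomorphism satisfying $p_if_i=\mathrm{id}$ and $p_i(x_i)=0$. Assuming $f_i(x)\equiv\sum_t y_t$ with $y_t<f_i(x)$ and applying $p_i$, I use that $p_i$ carries $\mathcal A^+P_n$ into $\mathcal A^+P_{n-1}$ and annihilates every monomial divisible by $x_i$, obtaining $x\equiv\sum_{t:\,\nu_i(y_t)=0}p_i(y_t)$. The order lemma I would establish is: if $\nu_i(y)=0$ and $y<f_i(x)$, then $p_i(y)<x$. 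This holds because on such $y$ the map $p_i$ only relabels variables, so $\omega(p_i(y))=\omega(y)$, while deleting the common zero in position $i$ from the two exponent vectors preserves their lexicographic comparison. Hence $x$ would be inadmissible, a contradiction.

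For $I\ne\emptyset$ (the subset $\Phi^+$) I would run the identical scheme with $p_{(i;I)}$, which is again an $\mathcal A$-homomorphism and satisfies $p_{(i;I)}f_i=\mathrm{id}$ and $p_{(i;I)}(x_i)=\sum_{s\in I}x_{s-1}$. Two facts are needed. First, (a): the $u$-compatibility conditions force $p_{(i;I)}(\phi_{(i;I)}(x))=x+(\text{a sum of monomials strictly less than }x)$; here the monomial recovering $x$ is the one in which the expansion of $(\sum_{s\in I}x_{s-1})^{2^r-1}$ contributes exactly the image $p_{(i;I)}(x_{(I,u)})$, and the exponents $2^r-2^{r-u}$, $2^{r-t}$ are precisely those making its multinomial coefficient odd by Lucas' theorem. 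Second, (b): for every monomial $y<\phi_{(i;I)}(x)$ in $P_n(\omega)$, every monomial in the expansion of $p_{(i;I)}(y)$ is $\leqslant x$, and the copies equal to $x$ occur with even total multiplicity apart from the single one produced in (a). Granting (a) and (b), applying $p_{(i;I)}$ to the assumed relation and collecting terms yields $x\equiv\sum(\text{monomials}<x)$, contradicting admissibility of $x$.

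The hard part is step (b), and I expect it to resist a uniform treatment, consistent with the statement being posed as a conjecture. Since $p_{(i;I)}(x_i)$ is a genuine sum of $r=\ell(I)$ distinct variables, a monomial $y$ expands under $p_{(i;I)}$ into as many as $\binom{\nu_i(y)+r-1}{r-1}$ monomials, and one must simultaneously control the weight vector and the exponent vector of each of them against $x$. The compatibility conditions (i)--(iv) are engineered so that the top monomial recovers $x$ while the others drop in the order $<$, but verifying this for \emph{all} admissible $y$ below $\phi_{(i;I)}(x)$ demands a delicate bookkeeping of dyadic carries that I do not see how to carry out in complete generality; this is exactly the point where the argument stalls. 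A realistic intermediate target is to induct on $\ell(I)$, factoring $\phi_{(i;I)}$ through one insertion step together with a map having a shorter index set, and to prove (a)--(b) first for $\ell(I)=1$, where $p_{(i;I)}(x_i)$ is a single variable and the relabeling argument of the $\Phi^0$ case applies verbatim.
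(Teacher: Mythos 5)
You should first be aware that the paper contains no proof of this statement: it is posed precisely as a conjecture (Conjecture \ref{gtom}) and left open. The paper only \emph{verifies} it in special cases, always by brute force: one explicitly determines $B_n(\omega)$ (via lemmas producing strictly inadmissible monomials together with a linear-independence argument) and then checks by direct computation that $\Phi(B_{n-1}(\omega))$ lies inside the resulting list; this is what Sections \ref{s4} and \ref{s6} do for $n=5$, $d=15\cdot 2^s-5$, and what the appeals to Peterson, Kameko and \cite{su5} do for $n\leqslant 4$. So there is no paper proof for your argument to be measured against; the only question is whether your proposal settles the conjecture, and it does not.

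Your reductions are sound as far as they go. The degree and weight bookkeeping is correct ($\deg x_{(I,u)}=2^r-1$, so $\phi_{(i;I)}$ preserves degree, and it preserves $\omega$ by construction), and your treatment of $\Phi^0$ is a complete and correct proof that $f_i$ carries admissible monomials to admissible monomials: $p_{(i;\emptyset)}$ is an $\mathcal A$-map, annihilates every monomial divisible by $x_i$, acts as pure relabelling on the rest, and relabelling preserves both the weight vector and the lexicographic comparison of exponent vectors once the common zero in slot $i$ is deleted (this recovers the paper's unproved remark that $\Phi^0(B)$ generates $P_n^0$ minimally). The genuine gap is exactly where you locate it: your step (b). For $\ell(I)=r>0$ the map $p_{(i;I)}$ sends $x_i$ to a sum of $r$ variables whose images collide with those of $x_{i_1},\ldots,x_{i_r}$, so expanding $p_{(i;I)}(y)$ produces dyadic carries. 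Carries can only lower the weight vector in left-lex order, which disposes of every $y$ with $\omega(y)<\omega(\phi_{(i;I)}(x))$; but for competitors $y$ with equal weight vector and smaller exponent vector you must control the exponent vector of \emph{every} term of $p_{(i;I)}(y)$ and show that the stray copies of $x$ cancel modulo $2$, and nothing in the compatibility conditions (i)--(iv) visibly delivers this. No argument of this kind appears in the paper either: there the maps $p_{(i;I)}$ are used only to prove linear independence of an already-listed set of monomials (as in the proof of Proposition \ref{mdd61}), never to transport inadmissibility relations from $P_n$ down to $P_{n-1}$. So your proposal proves the $\Phi^0$ half and leaves the $\Phi^+$ half --- the actual content of the conjecture --- unproved; completing step (b), even in the base case $\ell(I)=1$ you suggest (where, contrary to your last remark, the images of $x_i$ and $x_{i_1}$ still collide in a single variable, so the relabelling argument does \emph{not} apply verbatim), would be new mathematics rather than a reconstruction of anything in the paper.
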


It is easy to see that if this conjecture is true, then $\Phi(B_{n-1}(d)) \subset B_{n}(d)$ for any positive integer $d$.

From the results of Peterson \cite{pe}, Kameko \cite{ka} and the present author \cite{su5},  we can see that this conjecture is true for $n \leqslant 4$. In Section \ref{s5}, we will show that this conjecture is true for $n = 5$ and any weight vector $\omega$ of the degree $d = 15.2^s -5$ for arbitrary non-negative integer $s$. 
 
We recall some needed results in \cite{su5} which will be used later.  For a positive integer $b$, denote 
$$\omega_{(n,b)} =((n-1)^{(b)}) , \  \tilde \omega_{(n,b)}= ((n-1)^{(b)},1).$$

\begin{lem}[See \cite{su5}]\label{hq0} Let $b$ be a positive integer and let $j_0, j_1, \ldots , j_{b-1} \in \mathbb N_n$. We set $i = \min\{j_0,\ldots , j_{b-1}\}$, $I = (i_1, \ldots, i_r)$ with $\{i_1, \ldots, i_r\} = \{j_0,\ldots , j_{b-1}\}\setminus \{i\}$. Then, we have 
$$\prod_{0 \leqslant t <b}X_{j_t}^{2^t} \simeq_{b-1} \phi_{(i;I)}(X^{2^b-1}).$$
\end{lem}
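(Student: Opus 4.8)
The statement is, at heart, a \emph{sorting} assertion, so I would begin by writing both sides as explicit monomials. The left-hand product $\prod_{0\le t<b}X_{j_t}^{2^t}$ has $\nu_k=(2^b-1)-\sum_{t:\,j_t=k}2^t$, so it is obtained from $\prod_k x_k^{2^b-1}$ by deleting, at position $j_t$, the single binary digit $2^t$; each of $2^0,\dots,2^{b-1}$ is deleted exactly once, whence $\omega\big(\prod_{0\le t<b}X_{j_t}^{2^t}\big)=((n-1)^{(b)})=\omega_{(n,b)}$. On the other side, $X^{2^b-1}$ has every exponent equal to $2^b-1>2^r-1$, so it is $1$-compatible with $(i;I)$ (conditions (i)--(iv) of $u$-compatibility hold trivially for $u=1$), and a direct evaluation of Definition \ref{dfn1} shows that $\phi_{(i;I)}(X^{2^b-1})$ is again $\prod_k x_k^{2^b-1}$ with the same multiset of digits deleted, but now placed \emph{canonically}: the smallest index $i$ loses $2^{b-1}$, the next index $i_1$ loses $2^{b-2}$, and so on (and, when the $j_t$ are not distinct, the surplus digits accumulate on $i$). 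In particular $\omega(\phi_{(i;I)}(X^{2^b-1}))=\omega_{(n,b)}$ as well, so both sides lie in $P_n(\omega_{(n,b)})$ and it remains to show they differ by an element of $\mathcal A^+_{b-1}P_n+P_n^-(\omega_{(n,b)})$.

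I would argue by induction on $b$, the case $b=1$ being the identity $X_{j_0}=f_{j_0}(X)=\phi_{(j_0;\emptyset)}(X)$. For the inductive step, factor $\prod_{0\le t<b}X_{j_t}^{2^t}=Z\cdot X_{j_{b-1}}^{2^{b-1}}$ with $Z=\prod_{0\le t<b-1}X_{j_t}^{2^t}$; since $\omega(Z)=((n-1)^{(b-1)})$ vanishes above degree $b-1$, the induction hypothesis $Z\simeq_{b-2}\phi_{(i'';I'')}(X^{2^{b-1}-1})$ (with $i''=\min\{j_0,\dots,j_{b-2}\}$) combines with Proposition \ref{mdcb4}(i) to give
\[
\prod_{0\le t<b}X_{j_t}^{2^t}\ \simeq_{b-1}\ \phi_{(i'';I'')}(X^{2^{b-1}-1})\cdot X_{j_{b-1}}^{2^{b-1}}.
\]
Here the digits $2^0,\dots,2^{b-2}$ are deleted canonically among $j_0,\dots,j_{b-2}$, while the top digit $2^{b-1}$ is deleted at $j_{b-1}$; the only discrepancy with $\phi_{(i;I)}(X^{2^b-1})$ is that $2^{b-1}$ should instead be deleted at the global minimum $i=\min\{j_0,\dots,j_{b-1}\}$. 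Thus the proof is finished once I establish a \textbf{swap lemma}: interchanging which of two positions is missing $2^p$ and which is missing $2^q$ (for $0\le p<q\le b-1$) preserves the class modulo $\mathcal A^+_{b-1}P_n+P_n^-(\omega_{(n,b)})$.

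For the swap lemma I would apply $Sq^{2^q-2^p}$ to the auxiliary monomial in which both chosen positions $x_a,x_{a'}$ carry the exponent $c=(2^b-1)-2^q$. By the Cartan formula,
\[
Sq^{2^q-2^p}\big(x_a^{c}x_{a'}^{c}\big)=\sum_{k=0}^{2^q-2^p}\binom{c}{k}\binom{c}{2^q-2^p-k}\,x_a^{c+k}x_{a'}^{c+2^q-2^p-k},
\]
and since $c$ has all binary digits below $b$ set except $2^q$, Lucas' theorem makes the extreme terms $k=0$ and $k=2^q-2^p$ precisely the two monomials related by the swap (each with coefficient $1$), so their sum equals $Sq^{2^q-2^p}(x_a^cx_{a'}^c)$ modulo the intermediate terms. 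Because $2^q-2^p<2^{b-1}$, we have $Sq^{2^q-2^p}\in\mathcal A^+_{b-1}$; and in the inductive setting the untouched variables sit in a genuine $2^{b-1}$-th power factor, so that Proposition \ref{mdcb4} carries them along as spectators and localizes the computation to $x_a,x_{a'}$. The crux—and the step I expect to be hardest—is to verify that every intermediate term $x_a^{c+k}x_{a'}^{c+2^q-2^p-k}$ with $0<k<2^q-2^p$ has weight vector strictly below $\omega_{(n,b)}$, and hence lies in $P_n^-(\omega_{(n,b)})$; this is a carry-counting argument on the two exponents, and it is exactly the point where the behaviour of $\omega$ under the Cartan expansion must be controlled (one must also confirm that the localization via Proposition \ref{mdcb4} does not disturb this estimate). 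Granting the swap lemma, finitely many swaps move $2^{b-1}$ down to the position $i$ and reorder the remaining digits into canonical place, completing the induction; the degenerate case of coinciding $j_t$ is handled by the identical argument.
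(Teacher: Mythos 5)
First, a point of comparison: the paper gives no proof of Lemma \ref{hq0} at all --- it is quoted from \cite{su5} --- so your argument has to stand entirely on its own. Its outer layer does: both sides have weight vector $((n-1)^{(b)})$, the base case is right, and passing to $\phi_{(i'';I'')}(X^{2^{b-1}-1})X_{j_{b-1}}^{2^{b-1}}$ via Proposition \ref{mdcb4}(i) is legitimate. The break is at the step carrying all the content, namely the claim that finitely many two-digit swaps finish the proof. A swap, as you define it, exchanges which of two variables lacks $2^p$ and which lacks $2^q$; it therefore preserves, for \emph{every} variable, the number of binary digits missing from its exponent. But when the $j_t$ are not all distinct --- the case your last sentence waves off, and the case that necessarily occurs whenever $b>n$ --- the monomial produced by your inductive step and the target $\phi_{(i;I)}(X^{2^b-1})$ have \emph{different} such counts, so no composition of swaps can connect them. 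Concretely, for $n=3$, $b=3$, $(j_0,j_1,j_2)=(3,3,1)$, the induction leaves you to prove $x_1^3x_2^7x_3^4\simeq_2 x_1x_2^7x_3^6$: on the left $x_1$ lacks one digit and $x_3$ lacks two, on the right $x_1$ lacks two and $x_3$ lacks one. What is required is a \emph{transfer} of a digit from one variable to another with nothing exchanged in return; this instance is exactly the paper's Lemma \ref{bdad} (namely $X_1^4X_3^3\simeq_2 X_1^6X_3$), a block-redistribution relation of a genuinely different shape, and the attempt to realize a transfer by your $Sq$-trick fails outright since the extreme coefficient $\binom{2^b-1-2^q}{2^q}$ is even.

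Two further gaps, even where swaps would suffice. Your Lucas computation needs both swapped variables to possess every digit in $\{p,\dots,q-1\}$; if one of them is missing an extra digit in that range, the extreme coefficient $\binom{u}{2^q-2^p}$ vanishes mod $2$. In the inductive step with repetitions the minimal variable $i''$ always carries the whole block $\{r'',\dots,b-2\}$ of missing digits, so the swaps one would want to perform at $i''$ are precisely of this forbidden type. Second, the localization claim is false as stated: the spectator variables have exponents $2^b-1$ or $2^b-1-2^d$, which are odd, so the spectator factor is not a $2^{b-1}$-th power (nor any nontrivial power), and Proposition \ref{mdcb4} cannot ``carry it along''; the terms of the Cartan expansion in which part of the Steenrod operation falls on the spectators need their own weight estimate, which you flag but never supply. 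Ironically, the step you call the crux is fine --- a digit-count comparison shows every intermediate term of $Sq^{2^q-2^p}(x_a^cx_{a'}^c)$ drops the two-variable digit-count vector strictly in the left lexicographic order --- but that does not repair the structural failures above, which is why the proof in \cite{su5} is built on relations like Lemma \ref{bdad} rather than on single-digit transpositions.
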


\begin{lem}[See \cite{su5}]\label{bdad} Let $i, j, d, a, b$ be positive integers such that $i,j\leqslant n$, $i \ne j$, and $a+b = 2^d-1$. Then
 $$ X_i^aX_j^b \simeq_{2} X_i^{2^d-2}X_j.$$
\end{lem}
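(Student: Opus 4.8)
The plan is to pin down the weight vector, reduce the identity to transporting dyadic digits between the two ``active'' variables $x_i,x_j$, and then run an induction on $d$ whose key step is a single application of Proposition \ref{mdcb4}(i).

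First I would fix the weight vector. Since $a+b=2^d-1$ with $a,b>0$, the dyadic expansions of $a$ and $b$ are disjoint and their union is $\{0,1,\dots,d-1\}$, so $\alpha_{s-1}(a)+\alpha_{s-1}(b)=1$ for $1\le s\le d$ and $=0$ otherwise. Counting digits then gives $\omega(X_i^aX_j^b)=\omega(X_i^{2^d-2}X_j)=\omega_{(n,d)}$; hence both monomials lie in $P_n(\omega_{(n,d)})$, the relation $\simeq_2$ is meaningful, and $P_n^-(\omega_{(n,d)})$ records the strictly lower weight corrections. Writing $w=\prod_{k\ne i,j}x_k^{2^d-1}$ one has $X_i^aX_j^b=w\,x_i^bx_j^a$ and $X_i^{2^d-2}X_j=w\,x_ix_j^{2^d-2}$, so the task is to move every dyadic digit $\ge 1$ onto $x_j$ while leaving digit $0$ on $x_i$.

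Next comes the inductive move. For $d=2$ the only nontrivial case is $X_iX_j^2\simeq_2 X_i^2X_j$, which follows from $Sq^1(w\,x_ix_j)=w(x_i^2x_j+x_ix_j^2)+E$, where $E$ collects the terms in which $Sq^1$ hits $w$ and thus carries some exponent $2^d$, placing $E$ in $P_n^-(\omega_{(n,2)})$. For the step, observe that the top digit $d-1$ of $2^d-1$ sits on exactly one of $a,b$. If it sits on $a$, write $a=2^{d-1}+a'$ with $a'+b=2^{d-1}-1$, so that $X_i^aX_j^b=(X_i^{a'}X_j^b)\,X_i^{2^{d-1}}$. The base $X_i^{a'}X_j^b$ has weight $\omega_{(n,d-1)}$, whence $\omega_s(X_i^{a'}X_j^b)=0$ for $s>d-1$, and when $a',b\ge 1$ the induction hypothesis gives $X_i^{a'}X_j^b\simeq_2 X_i^{2^{d-1}-2}X_j$. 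Applying Proposition \ref{mdcb4}(i) with $s=d-1\ (\ge 2)$, $t=2\le s$ and cofactor $y^{2^s}=(X_i)^{2^{d-1}}$ upgrades this to
\[
X_i^aX_j^b\ \simeq_2\ X_i^{2^{d-1}-2}X_j\cdot X_i^{2^{d-1}}\ =\ X_i^{2^d-2}X_j .
\]
The decisive point is that factoring out the \emph{top} digit yields a $2^{d-1}$-th power with $d-1\ge 2$, so Proposition \ref{mdcb4}(i) preserves the index $2$; factoring out the bottom digit would only give a square ($s=1$), which degrades $\simeq_2$ to $\simeq_3$.

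The hard part will be the residual cases. The reduction above never reaches the configurations in which the top digit sits on $b$, nor the boundary cases $a'=0$ (or $b=0$) where the base degenerates to a spike $X_j^{2^{d-1}-1}$, and re-applying the reduction to these is circular. Everything funnels into a bounded family of extremal monomials, typified by $X_i^{2^{d-1}}X_j^{2^{d-1}-1}$, $X_i^{2^{d-1}-1}X_j^{2^{d-1}}$ and $X_i^{2^{d-1}-2}X_j^{2^{d-1}+1}$, each of which requires transporting the single top digit from one variable to the other. This top-digit transposition is the \emph{main obstacle}: it cannot be manufactured by squaring a lower relation (that route gives only $\simeq_3$), so it must be produced directly, by an explicit chain of trades $Sq^c(w\,x_i^px_j^q)$ with $c\in\{1,2,3\}$, in which each correction term has an exponent raised beyond $2^d-1$ and hence destroys a digit in the first $d$ positions of the weight vector, landing in $P_n^-(\omega_{(n,d)})$. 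I would dispose of these extremal monomials by a secondary induction on the number of misplaced digits, modeled on the computation that settles $d=3$, namely $X_i^4X_j^3\simeq_2 X_i^3X_j^4\simeq_2 X_i^5X_j^2\simeq_2 X_i^6X_j$ via $Sq^1,Sq^2,Sq^1$, checking throughout that only $Sq^1,Sq^2,Sq^3$ occur and that every intermediate monomial keeps weight $\omega_{(n,d)}$. As a reassurance that the scheme is on track, Lemma \ref{hq0} already delivers the weaker relation $X_i^aX_j^b\simeq_{d-1}X_i^{2^d-2}X_j$ for all admissible $(a,b)$, so the entire content of the lemma is precisely this strengthening of the index from $d-1$ down to $2$.
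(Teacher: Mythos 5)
First, a structural point: the paper contains no proof of Lemma \ref{bdad} at all — it is imported verbatim from \cite{su5} — so your argument has to stand entirely on its own. The parts you wrote out in full are correct: the no-carry observation showing both sides lie in $P_n(\omega_{(n,d)})$, the base case $d=2$ via $Sq^1(wx_ix_j)$ with the error term $Sq^1(w)x_ix_j$ landing in $P_n^-(\omega_{(n,2)})$, and the inductive step when the top dyadic digit lies on $a$ with $a'=a-2^{d-1}\geqslant 1$, where Proposition \ref{mdcb4}(i) with $s=d-1\geqslant 2$, $t=2$ preserves the index $2$ exactly as you claim.

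The genuine gap is the step you yourself label the hard part. The extremal pairs $(a,b)=(2^{d-1},2^{d-1}-1)$, $(2^{d-1}-1,2^{d-1})$ and $(2^{d-1}-2,2^{d-1}+1)$ — to which every ``top digit on $b$'' configuration reduces via your Proposition \ref{mdcb4}(i) move — are never actually disposed of: you announce ``a secondary induction on the number of misplaced digits'' using only $Sq^1,Sq^2,Sq^3$, but no such induction is formulated, and this is precisely where the lemma's content lives, since (as you note) everything down to index $d-1$ is already Lemma \ref{hq0}. For general $d$ these cases require relocating $d-2$ dyadic digits at once, so ``modeled on $d=3$'' is not a proof; moreover your own $d=3$ chain $X_i^4X_j^3\simeq_2 X_i^3X_j^4\simeq_2 X_i^5X_j^2\simeq_2 X_i^6X_j$ omits the third extremal monomial $X_i^2X_j^5$ from your own list. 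For what it is worth, the gap can be closed uniformly with no secondary induction: writing $w=\prod_{k\ne i,j}x_k^{2^d-1}$, the relation $Sq^1\bigl(wx_i^{2^{d-1}-1}x_j^{2^{d-1}-1}\bigr)\equiv w\bigl(x_i^{2^{d-1}}x_j^{2^{d-1}-1}+x_i^{2^{d-1}-1}x_j^{2^{d-1}}\bigr)$ modulo $P_n^-(\omega_{(n,d)})$ exchanges the first two extremal monomials, while $Sq^2\bigl(wx_i^{2^{d-1}-2}x_j^{2^{d-1}-1}\bigr)\equiv w\bigl(x_i^{2^{d-1}}x_j^{2^{d-1}-1}+x_i^{2^{d-1}-2}x_j^{2^{d-1}+1}\bigr)$ (the coefficients $\binom{2^{d-1}-2}{2}$ and $\binom{2^{d-1}-1}{2}$ are odd) converts them into the pair $(a,b)=(2^{d-1}+1,2^{d-1}-2)$, which falls under your main inductive case; the mirror $Sq^2$-relation handles $(2^{d-1}-2,2^{d-1}+1)$. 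Until such trades, together with the verification that every correction term lies in $P_n^-(\omega_{(n,d)})$, are written down, the proof is incomplete at its decisive point.
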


\begin{lem}[See \cite{su5}]\label{bdbss2} Let $(i;I) \in \mathcal N_n$ and let $d, h, u$ be   integers such that $\ell(I)= r < h \leqslant d$, and $1 \leqslant i< u \leqslant n$. Then, we have
$$\phi_{(i;I)}(X^{2^{h}-1})X_u^{2^{d }-2^{h}} \simeq_{r+2}\phi_{(i;I\cup u)}(X^{2^d-1}).$$
\end{lem}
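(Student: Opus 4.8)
The plan is to make both sides completely explicit, reduce the statement to an equivalence between two monomials, and then argue by induction on $r=\ell(I)$ with base case Lemma~\ref{bdad}. We may assume $u\notin I$, so that $\ell(I\cup u)=r+1$, and since $\phi_{(i;I\cup u)}(X^{2^d-1})=0$ unless $d\geq r+2$, we may also assume $d\geq r+2$. As $h>r$, the monomial $X^{2^h-1}$ is $1$-compatible with $(i;I)$, so $\phi_{(i;I)}(X^{2^h-1})=x_i^{2^r-1}X_i^{2^h-1}/\prod_{m=1}^{r}x_{i_m}^{2^{r-m}}$; multiplying by $X_u^{2^d-2^h}$ yields the monomial with $\nu_i=2^d-2^h+2^r-1$, $\nu_u=2^h-1$, $\nu_{i_m}=2^d-1-2^{r-m}$, and $\nu_j=2^d-1$ for the remaining $j$. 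Likewise $X^{2^d-1}$ is $1$-compatible with $(i;I\cup u)$, and reading off $\phi_{(i;I\cup u)}(X^{2^d-1})$ the same way gives the right-hand side. For $r=0$ one has $\phi_{(i;\emptyset)}=f_i$, so the two sides are $X_i^{2^h-1}X_u^{2^d-2^h}$ and $X_i^{2^d-2}X_u$; since $(2^h-1)+(2^d-2^h)=2^d-1$, Lemma~\ref{bdad} gives the result at once (with $\simeq_2=\simeq_{r+2}$).

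For the inductive step I would peel off the lowest binary digit, writing a monomial $z$ as $z_0z_1^{2}$ with $z_0$ the squarefree product of the variables carrying an odd exponent. From the exponents above, the unique variable of even exponent on the left is $x_{i_r}$, so the left-hand side equals $X_{i_r}L_1^{2}$ with $L_1=\phi_{(i;\widehat{I})}(X^{2^{h-1}-1})X_u^{2^{d-1}-2^{h-1}}$ and $\widehat{I}=(i_1,\dots,i_{r-1})$; this $L_1$ is precisely the left-hand side of the lemma with $(r,h,d)$ replaced by $(r-1,h-1,d-1)$. If $u<i_r$, then $x_{i_r}$ is also the unique even variable on the right, the right-hand side is $X_{i_r}R_1^{2}$, and halving identifies $R_1=\phi_{(i;\widehat{I}\cup u)}(X^{2^{d-1}-1})$, the right-hand side of the lemma at level $r-1$. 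The induction hypothesis gives $L_1\simeq_{r+1}R_1$, and since $\omega_k(X_{i_r})=0$ for $k>1$, Proposition~\ref{mdcb4}(ii) with $s=1$ yields $X_{i_r}L_1^{2}\simeq_{r+2}X_{i_r}R_1^{2}$, which is the assertion.

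The main obstacle is the case $u>i_r$, where $u$ exceeds every entry of $I$. Now the unique even variable on the right is $x_u$ rather than $x_{i_r}$, so the squarefree factors disagree. Applying the induction hypothesis to $L_1$ and then Proposition~\ref{mdcb4}(ii) still shows that the left-hand side is $\simeq_{r+2}$ to $M:=X_{i_r}\bigl(\phi_{(i;\widehat{I}\cup u)}(X^{2^{d-1}-1})\bigr)^{2}$, and a direct computation shows that $M$ and the right-hand side differ only by interchanging the exponents $2^d-2$ and $2^d-3$ on the two variables $x_{i_r},x_u$. Writing $g=(x_{i_r}x_u)^{2^d-3}W$ for the common part, where $W$ is the product of the remaining variables (all occurring to odd powers), we have $M=x_{i_r}g$ and $\mathrm{RHS}=x_ug$, and the Cartan formula gives $Sq^{1}(g)=(M+\mathrm{RHS})+(x_{i_r}x_u)^{2^d-3}Sq^{1}(W)=(M+\mathrm{RHS})+\sum_v x_vg$. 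Each correction monomial $x_vg$ raises one odd exponent of $W$ by one; a bitwise check shows that $\omega_1$ is unchanged while $\omega_2$ strictly drops, so $x_vg\in P_n^-(\omega(M))$. Hence $M+\mathrm{RHS}\in\mathcal A^+_1P_n+P_n^-(\omega(M))\subseteq\mathcal A^+_{r+2}P_n+P_n^-(\omega(M))$, i.e. $M\simeq_{r+2}\mathrm{RHS}$, closing the induction. The point I would watch most carefully is exactly this swap: one must track the carries when $Sq^{1}$ raises the odd exponents of $W$ and confirm that every resulting monomial genuinely descends in the left-lexicographic order on weight vectors, so that it lands in $P_n^-(\omega)$; everything else is routine exponent bookkeeping together with repeated use of Proposition~\ref{mdcb4}(ii) and Lemma~\ref{bdad}.
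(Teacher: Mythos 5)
You should know at the outset that this paper contains no proof of Lemma \ref{bdbss2} to compare against: it is quoted from \cite{su5}, so your argument has to stand on its own. Its core does stand: I checked the explicit exponents of both sides, the identification of $x_{i_r}$ as the unique even-exponent variable, the halving $\mathrm{LHS}=X_{i_r}L_1^{2}$ with $L_1$ equal to the level-$(r-1,h-1,d-1)$ left-hand side, the application of Proposition \ref{mdcb4}(ii) with $s=1$, and the $Sq^1$-swap when $u>i_r$, including the weight estimate (each correction term $x_vg$ keeps $\omega_1=n-1$ but has $\omega_2=n-3<n-1$, hence lies in $P_n^{-}(\omega)$). For $u\notin I$ and $h<d$ this is a complete and correct induction.

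The genuine gaps are exactly the two reductions you present as costless. First, ``we may assume $u\notin I$'' is not a loss of generality. The statement as printed allows $u\in I$ (then $I\cup u=I$), and that is precisely the case this paper later invokes: in the proof of Lemma \ref{bd5} the lemma is applied to $\phi_{(i;I\cup I_t\cup u)}(X^{2^{n}-1})X_u^{2^{d}-2^{n}}$ with $u$ already in the index set, with conclusion $\simeq_{s+2}$, $s=\ell(I\cup I_t\cup u)$. This case does not follow from the case you treat: if you run your peeling step on it (say $u=i_r$), the left side halves to a level-$(r-1)$ instance as before, but the right side halves to $X_{i_r}\bigl(\phi_{(i;\widehat I)}(X^{2^{d-1}-1})\bigr)^{2}$, so after the induction hypothesis you must compare $X_{i_r}\bigl(\phi_{(i;\widehat I\cup i_r)}(X^{2^{d-1}-1})\bigr)^{2}$ with $X_{i_r}\bigl(\phi_{(i;\widehat I)}(X^{2^{d-1}-1})\bigr)^{2}$, a genuinely different relation that your induction never produces (the paper's Proposition \ref{bdbss0} is devoted to statements of exactly this type, and its proof is not short). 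Second, the boundary $h=d$ is neither covered nor coverable by your argument: your base case invokes Lemma \ref{bdad}, which requires both exponents positive, i.e. $2^{d}-2^{h}>0$; and discarding the case where the right side vanishes is likewise not a reduction, since the lemma then still asserts $\mathrm{LHS}\simeq_{r+2}0$, which is a nontrivial claim. In fact for $h=d$ and $u\notin I$ the printed relation is false: with $n=3$, $r=0$, $h=d=2$ it reads $x_2^{3}x_3^{3}\simeq_{2}x_1x_2^{2}x_3^{3}$, which fails because the spike $x_2^{3}x_3^{3}$ cannot occur as a term of any element of $\mathcal A^{+}P_3+P_3^{-}((2,2))$. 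So the lemma must carry the implicit hypothesis $h<d$ in the case $u\notin I$; your induction proves precisely that case, but as written it silently claims more (all $h\leqslant d$) while proving less than the paper needs (it omits $u\in I$).
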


 We now prove some relations which are the usual tools for studying the hit problem.
\begin{prop}\label{bdbss} For any  integer $0 < \ell \leqslant n$, 
$$X_\ell^{2^\ell-1}x_\ell^{2^\ell} \simeq_\ell \sum_{u=\ell}^n\sum_{(i;I) \in \mathcal N_{\ell - 1}}\phi_{(i;I\cup u)}(X^{2^\ell-1})x_u^{2^\ell} + \sum_{u=\ell+1}^nX_u^{2^\ell-1}x_u^{2^\ell}.$$
\end{prop}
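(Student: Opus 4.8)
The plan is to exhibit $X_\ell^{2^\ell-1}x_\ell^{2^\ell}$ as the only ``heavy'' term produced by a single Steenrod square acting on a monomial of lower degree, and then to rewrite the remaining terms with the structural lemmas above. Reading off dyadic expansions one checks that $\omega:=\omega(X_\ell^{2^\ell-1}x_\ell^{2^\ell})=((n-1)^{(\ell)},1)=\tilde\omega_{(n,\ell)}$, so every relation below lives in $P_n(\omega)$ and ``$\simeq_\ell$'' means modulo $\mathcal A^+_\ell P_n+P_n^-(\omega)$. Put $Y=X_\ell^{2^\ell-1}x_\ell^{2^{\ell-1}}=x_\ell^{2^{\ell-1}}\prod_{j\ne\ell}x_j^{2^\ell-1}$. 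Since $1\le 2^{\ell-1}<2^\ell$ we have $Sq^{2^{\ell-1}}\in\mathcal A^+_\ell$, hence $Sq^{2^{\ell-1}}(Y)\equiv 0$ modulo $\mathcal A^+_\ell P_n$; the whole proof is the analysis of this element.

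First I would expand $Sq^{2^{\ell-1}}(Y)$ by the Cartan formula using $Sq^{k}(x_j^{a})=\binom{a}{k}x_j^{a+k}$. By Lucas' theorem $\binom{2^\ell-1}{k}\equiv 1$ for all admissible $k$, whereas $\binom{2^{\ell-1}}{k_\ell}\equiv 1$ precisely for $k_\ell\in\{0,2^{\ell-1}\}$. The choice $k_\ell=2^{\ell-1}$ (all other $k_j=0$) returns exactly $X_\ell^{2^\ell-1}x_\ell^{2^\ell}$, while $k_\ell=0$ contributes $x_\ell^{2^{\ell-1}}\sum\prod_{j\ne\ell}x_j^{2^\ell-1+k_j}$, summed over tuples $(k_j)_{j\ne\ell}$ of non-negative integers with $\sum_{j\ne\ell}k_j=2^{\ell-1}$. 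Consequently
\[
X_\ell^{2^\ell-1}x_\ell^{2^\ell}\ \equiv\ x_\ell^{2^{\ell-1}}\sum_{\substack{(k_j)_{j\ne\ell}\\ \sum_{j}k_j=2^{\ell-1}}}\ \prod_{j\ne\ell}x_j^{2^\ell-1+k_j}\pmod{\mathcal A^+_\ell P_n}.
\]

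Next I would discard the low-weight terms. A bit count gives $\omega_i\le n-1$ for all $i\le\ell-1$, and equality throughout $1\le i\le\ell-1$ forces every $k_j$ but one to vanish, the survivor being $k_u=2^{\ell-1}$; any other tuple has $\omega_i<n-1$ for some $i\le\ell-1$ and hence lies in $P_n^-(\omega)$. Thus only the $n-1$ tuples with a single $k_u=2^{\ell-1}$ survive, and the canonical decomposition $x=\prod_iX_{\mathbb J_i(x)}^{2^i}$ identifies each as $T_u:=X_\ell^{2^{\ell-1}-1}X_u^{2^{\ell-1}}x_u^{2^\ell}$, $u\in\mathbb N_n\setminus\{\ell\}$. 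This yields the intermediate relation $X_\ell^{2^\ell-1}x_\ell^{2^\ell}\simeq_\ell\sum_{u\ne\ell}T_u$; for $\ell=1$, where $\mathcal N_0=\emptyset$ and $T_u=X_ux_u^2$, this is already the assertion.

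The last and hardest step is to identify $\sum_{u\ne\ell}T_u$ with the stated right-hand side. Factoring $X_\ell^{2^{\ell-1}-1}X_u^{2^{\ell-1}}=\prod_{t=0}^{\ell-2}X_\ell^{2^t}\cdot X_u^{2^{\ell-1}}$ puts it in the shape $\prod_{0\le t<\ell}X_{j_t}^{2^t}$, so Lemma \ref{hq0} rewrites it as a single $\phi$-image; since its weight vanishes above level $\ell$, Proposition \ref{mdcb4}(i) lets me carry the factor $x_u^{2^\ell}$ along and conclude $T_u\simeq_\ell\phi_{(i;I)}(X^{2^\ell-1})x_u^{2^\ell}$ with $i=\min\{\ell,u\}$. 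For $u>\ell$ one must then split this single image, via Lemmas \ref{bdad} and \ref{bdbss2}, into the spike contribution $X_u^{2^\ell-1}x_u^{2^\ell}$ together with terms $\phi_{(i;I\cup u)}(X^{2^\ell-1})x_u^{2^\ell}$ indexed by $\mathcal N_{\ell-1}$, while the indices $u<\ell$ (whose naive images carry the factors $x_u^{2^\ell}$ absent from the target) must be redistributed, using the same two lemmas, onto the indices $u\ge\ell$. I expect this reassembly to be the main obstacle: the survivors $\sum_{u\ne\ell}T_u$ do not match the right-hand side term by term, so one has to reduce both expressions to a common normal form by repeated application of \ref{hq0}, \ref{bdad} and \ref{bdbss2}, and then check that the two expansions agree once all error terms have been absorbed into $\mathcal A^+_\ell P_n+P_n^-(\omega)$.
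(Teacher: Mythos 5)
Your first two steps are correct and genuinely different from the paper's route: applying the single operation $Sq^{2^{\ell-1}}$ to $Y=X_\ell^{2^\ell-1}x_\ell^{2^{\ell-1}}$ and discarding the unconcentrated tuples by the bit count does establish the intermediate relation $X_\ell^{2^\ell-1}x_\ell^{2^\ell}\simeq_\ell\sum_{u\ne\ell}T_u$ with $T_u=X_\ell^{2^{\ell-1}-1}X_u^{2^{\ell-1}}x_u^{2^\ell}$, and Lemma \ref{hq0} together with Proposition \ref{mdcb4}(i) converts each $T_u$ into a single image $\phi_{(i;I)}(X^{2^\ell-1})x_u^{2^\ell}$ with $i=\min\{\ell,u\}$ and $I$ consisting of the other element of $\{\ell,u\}$. (The paper instead starts from $Sq^1(X_\emptyset^{2^{\ell+1}-1})$ and runs an induction on $\ell$; for $\ell=1$ the two starting points coincide. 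One small point you should make explicit: no term of the expansion can have weight \emph{exceeding} $\omega$ at a position $\leqslant\ell+1$ --- this follows from a short degree-parity argument and is needed before ``not surviving'' can be equated with ``lying in $P_n^-(\omega)$''.)

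However, the argument stops being a proof exactly at the point you yourself flag as ``the main obstacle,'' and that point is where essentially all of the content of the proposition lies. After your reduction you hold $n-1$ terms: for $u<\ell$ the term $\phi_{(u;(\ell))}(X^{2^\ell-1})x_u^{2^\ell}$, which carries the factor $x_u^{2^\ell}$ with $u<\ell$ and therefore has no counterpart on the right-hand side at all; for $u>\ell$ the single term $\phi_{(\ell;(u))}(X^{2^\ell-1})x_u^{2^\ell}$, whose index $(\ell;(u))$ is not of the form $(i;I\cup u)$ with $(i;I)\in\mathcal N_{\ell-1}$ (that would force $i\leqslant\ell-1$), whereas the target attaches to this same $u$ the $2^{\ell-1}-1$ terms $\phi_{(i;I\cup u)}(X^{2^\ell-1})x_u^{2^\ell}$, $(i;I)\in\mathcal N_{\ell-1}$, plus the spike $X_u^{2^\ell-1}x_u^{2^\ell}$. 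The tools you propose for the conversion cannot produce it: Lemma \ref{bdad} trades exponents between two factors $X_i^aX_j^b$, and Lemma \ref{bdbss2} absorbs a factor $X_u^{2^d-2^h}$ into a longer index $I\cup u$; both replace one term by one term, and neither splits a single $\phi$-image into a sum over $\mathcal N_{\ell-1}$ together with a spike. That splitting is, in substance, the proposition itself (compare Corollary \ref{hq3}), and the paper obtains it only through the induction on $\ell$, the factorization $X_u^{2^{\ell+1}-1}x_u^{2^{\ell+1}}=X_u^{2^{\ell+1-u}-1}(X_u^{2^u-1}x_u^{2^u})^{2^{\ell+1-u}}$ that feeds the inductive hypothesis, and the bookkeeping identity (\ref{rela1}) for $\mathcal N_h$. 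Since your argument contains no induction, the identification of $\sum_{u\ne\ell}T_u$ with the stated right-hand side is left entirely unproved; it is a true statement (both sides are $\simeq_\ell$ to the left-hand side), but verifying it along your lines would require running the case $\ell-1$, after a permutation of the variables, inside the factors $(X_ux_u^2)^{2^{\ell-1}}$ of the $T_u$ with $u<\ell$ and then redoing the $\mathcal N$-bookkeeping --- that is, reconstructing the paper's inductive proof on top of your opening move.
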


\begin{proof} We prove the proposition by induction on $\ell$. For $\ell = 1$,
$$X_1x_1^2 = \sum_{u=2}^nX_ux_u^{2} + Sq^1(X_\emptyset) \simeq_1 \sum_{u=2}^nX_ux_u^{2}.$$
Since $\mathcal N_0=\emptyset$, the proposition is true. 

Suppose that $1 \leqslant \ell <n$ and the proposition is true for $\ell$. Using the Cartan formula, we have
\begin{align*}
X_{\ell+1}^{2^{\ell + 1} -1}x_{\ell+1}^{2^{\ell+1}} 
&= \sum_{u=1}^\ell X_u^{2^{\ell+1}-1}x_u^{2^{\ell+1}} + \sum_{u=\ell+2}^nX_u^{2^{\ell+1} - 1}x_u^{2^{\ell+1} } + Sq^1(X_\emptyset^{2^{\ell+1}-1})\\
&\simeq_1 \sum_{u=1}^\ell X_u^{2^{\ell +1-u}-1}(X_u^{2^{u}-1}x_u^{2^{u}})^{2^{\ell+1-u}} + \sum_{u=\ell+2}^nX_u^{2^{\ell+1} - 1}x_u^{2^{\ell+1} }.
\end{align*}
For $1\leqslant u \leqslant \ell$, using the inductive hypothesis and  Proposition \ref{mdcb4}, we get 
\begin{align*}
X_u^{2^{\ell +1-u}-1}(X_u^{2^{u}-1}x_u^{2^{u}})^{2^{\ell+1-u}} 
&\simeq_{\ell+1}X_u^{2^{\ell +1-u}-1}\Big(\sum_{t=u+1}^nX_t^{2^u-1}x_t^{2^u}\\ 
&\quad +\sum_{t=u}^n\sum_{(i;I) \in \mathcal N_{u-1}}\phi_{(i;I\cup t)} (X^{2^{u}-1})x_t^{2^{u}}\Big)^{2^{\ell+1-u}}.
\end{align*}
According to Lemma \ref{hq0} and Proposition \ref{mdcb4}, 
\begin{align*}
X_u^{2^{\ell +1-u}-1}(X_t^{2^u-1}x_t^{2^u})^{2^{\ell+1-u}} &\simeq_{\ell }\phi_{(u;t)}(X^{2^{\ell+1} - 1})x_t^{2^{\ell+1}},\\
X_u^{2^{\ell +1-u}-1}(\phi_{(i;I\cup t)} (X^{2^{u}-1})x_t^{2^{u}})^{2^{\ell+1-u}} &\simeq_{\ell} \phi_{(i;I\cup t \cup u)} (X^{2^{\ell+1}-1})x_t^{2^{\ell+1}}. 
\end{align*}
From the above equalities, we get
\begin{align*}
&X_u^{2^{\ell +1-u}-1}(X_u^{2^{u}-1}x_u^{2^{u}})^{2^{\ell+1-u}}
\simeq_{\ell+1}  \sum_{(i;I) \in \mathcal N_{u-1}}\phi_{(i;I\cup u)} (X^{2^{\ell+1}-1})x_u^{2^{\ell+1}}\\
&\qquad+  \sum_{t=u+1}^n\Big(\sum_{(i;I) \in \mathcal N_{u-1}}\phi_{(i;I\cup t \cup u)} (X^{2^{\ell+1}-1})x_t^{2^{\ell+1}} + \phi_{(u;t)}(X^{2^{\ell+1} - 1})x_t^{2^{\ell+1}}\Big).
\end{align*}
For $2 \leqslant  u \leqslant h \leqslant n$, we set $\mathcal N_{u-1}\cup u = \{(i;I\cup u) : (i;I) \in  \mathcal N_{u-1}\}$. Then we have
\begin{equation}\label{rela1}\mathcal N_{h} = (\mathcal N_1\cup 2) \cup \ldots \cup (\mathcal N_{h-1}\cup h) \cup \{(1;\emptyset),\ldots ,(h;\emptyset)\}.
\end{equation}
By a direct computation from the above equalities and using the relation (\ref{rela1}), we have
\begin{align*}
&\sum_{u=1}^\ell X_u^{2^{\ell +1-u}-1}(X_u^{2^{u}-1}x_u^{2^{u}})^{2^{\ell+1-u}}\simeq_{\ell+1}\sum_{u=2}^\ell\sum_{(i;I) \in \mathcal N_{u-1}}\phi_{(i;I\cup u)} (X^{2^{\ell+1}-1})x_u^{2^{\ell+1}}\\
&\quad + \sum_{t=2}^\ell\Big(\sum_{u=1}^{t-1}\sum_{(i;I) \in \mathcal N_{u-1}\cup u}\phi_{(i;I\cup t)} (X^{2^{\ell+1}-1}) + \phi_{(u;t)}(X^{2^{\ell+1} - 1})\Big)x_t^{2^{\ell+1}}\\
&\quad +\sum_{t=\ell+1}^n\Big(\sum_{u=1}^{\ell}\sum_{(i;I) \in \mathcal N_{u-1}\cup u}\phi_{(i;I\cup t)} (X^{2^{\ell+1}-1}) + \phi_{(u;t)}(X^{2^{\ell+1} - 1})\Big)x_t^{2^{\ell+1}}\\
&=\sum_{u=2}^\ell\sum_{(i;I) \in \mathcal N_{u-1}}\phi_{(i;I\cup u)} (X^{2^{\ell+1}-1})x_u^{2^{\ell+1}}
+ \sum_{t=2}^\ell\sum_{(i;I) \in \mathcal N_{t-1}}\phi_{(i;I\cup t)} (X^{2^{\ell+1}-1})x_t^{2^{\ell+1}}\\
&\hskip5.5cm +\sum_{t=\ell+1}^n\sum_{(i;I) \in \mathcal N_{\ell}}\phi_{(i;I\cup t)} (X^{2^{\ell+1}-1})x_t^{2^{\ell+1}}\\ 
&=\sum_{t=\ell+1}^n\sum_{(i;I) \in \mathcal N_{\ell}}\phi_{(i;I\cup t)} (X^{2^{\ell+1}-1})x_t^{2^{\ell+1}}.
\end{align*}
Combining the above equalities, we get
$$X_{\ell+1}^{2^{\ell + 1} -1}x_{\ell+1}^{2^{\ell+1}}  \simeq_{\ell+1}  \sum_{u=\ell+1}^n \sum_{(i;I) \in \mathcal N_{\ell}}\phi_{(i;I\cup u )} (X^{2^{\ell+1}-1})x_u^{2^{\ell+1}}+ \sum_{u=\ell+2}^nX_u^{2^{\ell+1} - 1}x_u^{2^{\ell+1}}.$$
The proposition follows.
\end{proof}

From the proof of this lemma, we obtain the following.
\begin{corl}\label{hq3} For $2 \leqslant d \leqslant n$, we have
$$ \sum_{u=1}^{d-1}X_u^{2^{d}-1}x_u^{2^{d}} \simeq_{d} \sum_{u=d}^n \sum_{(i;I) \in \mathcal N_{d-1}}\phi_{(i;I\cup u )} (X^{2^{d}-1})x_u^{2^{d}}.$$
\end{corl}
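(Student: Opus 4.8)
The plan is to read the desired identity directly out of the intermediate computation performed in the proof of Proposition~\ref{bdbss}, rather than to run a fresh induction. The corollary is precisely the displayed ``combining'' step of that proof, stripped of the boundary terms that come from $X_{\ell+1}^{2^{\ell+1}-1}x_{\ell+1}^{2^{\ell+1}}$. Concretely, I would set $d = \ell+1$ in the proof and isolate the expansion of $\sum_{u=1}^{\ell} X_u^{2^{\ell+1-u}-1}\bigl(X_u^{2^u-1}x_u^{2^u}\bigr)^{2^{\ell+1-u}}$.

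First I would recall the chain of simplifications already established there: for each $1 \leqslant u \leqslant \ell$, Proposition~\ref{bdbss} (the inductive hypothesis) together with Proposition~\ref{mdcb4} rewrites each summand, and then Lemma~\ref{hq0} and Proposition~\ref{mdcb4} convert the two building blocks $X_u^{2^{\ell+1-u}-1}(X_t^{2^u-1}x_t^{2^u})^{2^{\ell+1-u}}$ and $X_u^{2^{\ell+1-u}-1}(\phi_{(i;I\cup t)}(X^{2^u-1})x_t^{2^u})^{2^{\ell+1-u}}$ into $\phi_{(u;t)}(X^{2^{\ell+1}-1})x_t^{2^{\ell+1}}$ and $\phi_{(i;I\cup t\cup u)}(X^{2^{\ell+1}-1})x_t^{2^{\ell+1}}$ respectively, all modulo $\simeq_{\ell+1}$. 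The reindexing identity~(\ref{rela1}) for $\mathcal N_h$ is then what collapses the triple sum into a single sum over $\mathcal N_\ell$. The upshot, already written out in the proof, is the congruence
$$\sum_{u=1}^{\ell} X_u^{2^{\ell+1-u}-1}\bigl(X_u^{2^u-1}x_u^{2^u}\bigr)^{2^{\ell+1-u}} \simeq_{\ell+1} \sum_{t=\ell+1}^n \sum_{(i;I)\in\mathcal N_\ell}\phi_{(i;I\cup t)}(X^{2^{\ell+1}-1})x_t^{2^{\ell+1}}.$$

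To reach the corollary I would relate the left-hand side of this display to the left-hand side of the claimed statement. Applying Proposition~\ref{mdcb4} in reverse (or rather the Cartan-formula step used at the start of the proof), each term $X_u^{2^{\ell+1-u}-1}(X_u^{2^u-1}x_u^{2^u})^{2^{\ell+1-u}}$ is $\simeq_1$-congruent to $X_u^{2^{\ell+1}-1}x_u^{2^{\ell+1}}$, since $X_u^{2^{\ell+1-u}-1}(X_u^{2^u-1})^{2^{\ell+1-u}} = X_u^{2^{\ell+1}-1}$ and $(x_u^{2^u})^{2^{\ell+1-u}} = x_u^{2^{\ell+1}}$. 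Summing over $1 \leqslant u \leqslant \ell$ and writing $d = \ell+1$ turns the left-hand side into $\sum_{u=1}^{d-1} X_u^{2^d-1}x_u^{2^d}$ and the right-hand side into $\sum_{u=d}^n\sum_{(i;I)\in\mathcal N_{d-1}}\phi_{(i;I\cup u)}(X^{2^d-1})x_u^{2^d}$, which is exactly the asserted identity.

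The step I expect to require the most care is the bookkeeping with the grading parameter $s$ in the relation $\simeq_s$: the identity $X_u^{2^{\ell+1-u}-1}(X_u^{2^u-1}x_u^{2^u})^{2^{\ell+1-u}} \simeq_1 X_u^{2^{\ell+1}-1}x_u^{2^{\ell+1}}$ is an actual equality of monomials (up to the $Sq^1$ already absorbed), so it holds for the strongest possible $s$, and the limiting congruence is the $\simeq_{\ell+1}$ one coming from the inductive hypothesis; I must confirm that combining a $\simeq_1$ relation with a $\simeq_{\ell+1}$ relation legitimately yields a $\simeq_{\ell+1} = \simeq_d$ statement, which it does since $\mathcal A^+_1 \subset \mathcal A^+_{\ell+1}$ and both are taken against the same weight vector $\omega = ((\ell+1)^{(\ell+1)})$ attached to these degree-$(2^{\ell+1}-1)+2^{\ell+1}$ monomials. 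Since everything needed is already isolated inside the proof of Proposition~\ref{bdbss}, no genuinely new estimate is involved; the work is purely extracting and relabelling, which is why the corollary is stated as following ``from the proof of this lemma.''
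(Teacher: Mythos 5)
Your proposal is correct and is essentially the paper's own proof: the paper justifies the corollary only by the remark that it follows from the proof of Proposition \ref{bdbss}, namely from the intermediate congruence $\sum_{u=1}^{\ell}X_u^{2^{\ell+1-u}-1}(X_u^{2^u-1}x_u^{2^u})^{2^{\ell+1-u}} \simeq_{\ell+1} \sum_{t=\ell+1}^{n}\sum_{(i;I)\in\mathcal N_{\ell}}\phi_{(i;I\cup t)}(X^{2^{\ell+1}-1})x_t^{2^{\ell+1}}$ combined with the exact monomial identity $X_u^{2^{\ell+1-u}-1}(X_u^{2^u-1}x_u^{2^u})^{2^{\ell+1-u}} = X_u^{2^{\ell+1}-1}x_u^{2^{\ell+1}}$ and the relabelling $d=\ell+1$, exactly as you describe. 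One cosmetic correction: the common weight vector of the monomials involved is $\tilde\omega_{(n,d)}=((n-1)^{(d)},1)$, not $((\ell+1)^{(\ell+1)})$; this does not affect your argument, since the rewriting of each summand is a genuine equality of monomials (no $Sq^1$ enters there), so no combination of $\simeq_1$ with $\simeq_{\ell+1}$ is actually needed.
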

  
\begin{prop}\label{bdbss0} Let $d, h,t, u$ be  integers such that $1 \leqslant u < d-n+t$, $0< t <h \leqslant  n$, and let $x$ be a monomial in $P_n$. Then we have
$$Z:=\phi_{(t;I_t)}(X^{2^{d - u}-1})X_h^{2^{d }-2^{d - u}}x^{2^d} \simeq_{n-t+1}\phi_{(t;I_t)}(X^{2^d-1})x^{2^d}.$$
\end{prop}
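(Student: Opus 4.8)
My plan is to deduce the statement from Lemma~\ref{bdbss2} and the lifting Proposition~\ref{mdcb4}(i), so that no Steenrod operation has to be written down explicitly. First I would match the parameters of Lemma~\ref{bdbss2} to the present situation by taking its pair $(i;I)$ to be $(t;I_t)$, its exponent level to be $d-u$, its target level to be $d$, and its multiplier index to be $h$. The hypothesis $u<d-n+t$ rearranges to $d-u>n-t\geqslant\ell(I_t)$, which is exactly the inequality $\ell(I_t)<d-u\leqslant d$ required by the lemma; it also guarantees, via $1$-compatibility, that $\phi_{(t;I_t)}(X^{2^{d-u}-1})\neq0$. Since $t<h\leqslant n$, Lemma~\ref{bdbss2} then gives
\[
\phi_{(t;I_t)}(X^{2^{d-u}-1})X_h^{2^{d}-2^{d-u}}\simeq_{\ell(I_t)+2}\phi_{(t;I_t\cup h)}(X^{2^{d}-1}),
\]
and the produced index set $I_t\cup h$ coincides with $I_t$ precisely when $h\in I_t$, the case in force here, so the right-hand side is $\phi_{(t;I_t)}(X^{2^{d}-1})$.

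Next I would multiply this relation by $x^{2^d}$. Writing $w:=\phi_{(t;I_t)}(X^{2^{d-u}-1})X_h^{2^{d}-2^{d-u}}$, the crucial point is that every exponent of $w$ is smaller than $2^d$: the exponent of $x_t$ equals $2^{\ell(I_t)}-1+2^{d}-2^{d-u}=2^d-(2^{d-u}-2^{\ell(I_t)}+1)<2^d$ because $\ell(I_t)<d-u$, and all remaining exponents are of the form $2^d-1-(\cdots)$ or $2^{d-u}-1-(\cdots)$. Hence $\omega_i(w)=0$, and in particular $\omega_i(w)\leqslant1$, for all $i>d$. Since moreover $\ell(I_t)+2\leqslant(n-t)+2\leqslant d$, using $d\geqslant n-t+2$ which follows from $d-u>n-t$ and $u\geqslant1$, Proposition~\ref{mdcb4}(i) applies with $s=d$ and preserves the modulus, yielding
\[
Z=w\,x^{2^d}\simeq_{\ell(I_t)+2}\phi_{(t;I_t)}(X^{2^{d}-1})x^{2^d}.
\]
As $\ell(I_t)+2\leqslant n-t+1$, the relation $\simeq_{\ell(I_t)+2}$ implies the weaker relation $\simeq_{n-t+1}$, which is the assertion.

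The step I expect to require the most care is the weight-vector bookkeeping around the lift: one has to be sure that $w$ carries no weight above level $d$, so that $x^{2^d}$ sits strictly on top of $w$ and Proposition~\ref{mdcb4}(i) keeps the modulus unchanged rather than weakening it, and that the whole argument is read at the single weight vector $\omega(Z)=((n-1)^{(d)},\omega(x))$; this last point amounts to checking that both $w$ and $\phi_{(t;I_t)}(X^{2^d-1})$ have weight vector $((n-1)^{(d)})$, a short digit-by-digit count. A secondary check is purely numerical, namely deriving $\ell(I_t)<d-u$, $\ell(I_t)+2\leqslant d$, and $\ell(I_t)+2\leqslant n-t+1$ from the single hypothesis $u<d-n+t$; the last of these needs $\ell(I_t)\leqslant n-t-1$, so the extremal case $I_t=\{t+1,\dots,n\}$, if it occurs, should be inspected separately. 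A fully equivalent alternative is a downward induction on $u$ built on the elementary step $\phi_{(t;I_t)}(X^{2^{c}-1})X_h^{2^{c}}\simeq_{\ell(I_t)+2}\phi_{(t;I_t)}(X^{2^{c+1}-1})$ with $c=d-u$; since each telescoping step already costs modulus $\ell(I_t)+2$, it yields nothing sharper than the single application above.
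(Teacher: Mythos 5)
Your reduction to Lemma \ref{bdbss2} is set up correctly as far as it goes: taking its pair to be $(t;I_t)$, its exponent level to be $d-u$, its target to be $d$ and its multiplier index to be $h$, the hypotheses $\ell(I_t)<d-u\leqslant d$ and $t<h\leqslant n$ do follow from $u<d-n+t$ and $0<t<h\leqslant n$, and $I_t\cup h=I_t$ since $h\in I_t$. The gap is in the modulus. In this paper $I_t$ is not an arbitrary index set; it is defined to be $(t+1,t+2,\ldots,n)$, so $\ell(I_t)=n-t$ always. Hence Lemma \ref{bdbss2} yields only $\simeq_{\ell(I_t)+2}=\simeq_{n-t+2}$, and your closing inequality $\ell(I_t)+2\leqslant n-t+1$ is false; the ``extremal case'' you propose to inspect separately is in fact the only case (your own exponent count for $x_t$, namely $2^{\ell(I_t)}-1+2^d-2^{d-u}$ with $\ell(I_t)=n-t$, already uses this). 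Since $\mathcal A^+_{n-t+1}P_n\subsetneq\mathcal A^+_{n-t+2}P_n$, what you prove is strictly weaker than the assertion, and the loss of one unit is not cosmetic: Proposition \ref{bdbss1} and Lemma \ref{bd5} invoke Proposition \ref{bdbss0} precisely to obtain $\simeq_{n}$ and $\simeq_{n-t}$ relations (the case $t=1$, resp. $t+1$ in place of $t$), and with the weaker modulus Lemma \ref{bd5} would no longer express $Y_1$ as $\sum_{0\leqslant i<n}Sq^{2^i}(y_i)+h$ with all $i<n$, which is exactly what the proof of Proposition \ref{hq4} needs. In other words, Proposition \ref{bdbss0} is genuinely sharper than any direct specialization of Lemma \ref{bdbss2}, so it cannot be a formal corollary of that lemma.

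The paper obtains the extra unit by a double induction on $(t,u)$ that never applies a generic lemma to the full pair $(t;I_t)$. In each case it factors $Z$ through the next pair, e.g. $Z=\phi_{(t+2;I_{t+2})}(X^{2^{n-t-1}-1})\,(\,\cdots)^{2^{n-t-1}}$ where the outer monomial has weight concentrated in levels $\leqslant n-t-1$ and the inner factor involves only the two blocks $X_t,X_{h}$ (or $X_t,X_{t+1}$) together with $x$; Lemma \ref{bdad} handles the inner factor at cost $\simeq_2$, and Proposition \ref{mdcb4}(ii) combines outer and inner to give modulus $(n-t-1)+2=n-t+1$. The gain of $1$ thus comes from pushing the two-variable manipulation one level deeper into the factorization than Lemma \ref{bdbss2} does. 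To salvage your approach you would need a sharpened, $I=I_t$-specific version of Lemma \ref{bdbss2} with modulus $\ell(I_t)+1$, and proving that amounts to reproducing the paper's induction; your telescoping alternative, as you note yourself, suffers from the same defect.
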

\begin{proof} We prove the proposition by double induction on $(t, u)$. If $t = n-1$, then $h=n$. By Lemma \ref{bdad}, we have
\begin{align*}\phi_{(n-1;n)}(X^{2^{d - u}-1})X_n^{2^{d }-2^{d - u}}x^{2^d} &=X_{n-1}^{2^{d - u}-2}X_n^{2^{d }-2^{d - u}+1}x^{2^d}\\ & \simeq_2  \phi_{(n-1;n)}(X^{2^{d}-1})x^{2^d}.
\end{align*}
So, the proposition holds.
Suppose that $0< t < n-1$ and the proposition is true for $t+1$. If $h = t+1$, then
$$Z = \phi_{(t+2;I_{t+2})}(X^{2^{n-t-1}-1})
(X_t^{2^{d -n+t - u+1}-2}X_{t+1}^{2^{d -n+t+1}-2^{d-n+t- u+1} + 1}x^{2^{d-n+t+1}})^{2^{n-t-1}}.$$
According to Lemma \ref{bdad}, 
$$X_t^{2^{d -n+t - u+1}-2}X_{t+1}^{2^{d -n+t+1}-2^{d-n+t- u+1} + 1}x^{2^{d-n+t+1}}\simeq_2 X_t^{2^{d -n+t +1}-2}X_{t+1}x^{2^{d-n+t+1}}.$$
Hence,  using Proposition \ref{mdcb4}, we obtain
\begin{align*}
Z &\simeq_{n-t+1} \phi_{(t+2;I_{t+2})}(X^{2^{n-t-1}-1})(X_t^{2^{d -n+t +1}-2}X_{t+1}x^{2^{d-n+t+1}})^{2^{n-t-1}}\\
&=\phi_{(t;I_{t})}(X^{2^{d}-1})x^{2^{d}}. 
\end{align*}
The proposition holds. Suppose that $h>t+1$ and $u =1$. We have
$$Z = \phi_{(t+1;I_{t+1})}(X^{2^{n-t}-1})
(X_t^{2^{d -n+t - 1}-1}X_{h}^{2^{d -n+t-1}}x^{2^{d-n+t}})^{2^{n-t}}.$$
Since $X_t^{2^{d - n + t - 1}-1}X_{h}^{2^{d -n+t-1}}x^{2^{d-n+t}} \simeq_{1} X_t^{2^{d -n+t - 1}}X_{h}^{2^{d -n+t-1}-1}x^{2^{d-n+t}}$, applying Proposition \ref{mdcb4} and the inductive hypothesis, we have
\begin{align*}Z &\simeq_{n-t+1} \phi_{(t+1;I_{t+1})}(X^{2^{n-t}-1})
(X_t^{2^{d - n + t - 1}}X_{h}^{2^{d -n+t-1}-1}x^{2^{d-n+t}})^{2^{n-t}}\\
&= \phi_{(t+1;I_{t+1})}(X^{2^{n-t}-1})
X_{h}^{2^{d -1}- 2^{n-t}}(X_tx^2)^{2^{d - 1}}\\
&\simeq_{n-t} \phi_{(t+1;I_{t+1})}(X^{2^{d-1}-1})(X_tx^2)^{2^{d - 1}}\\
&= \phi_{(t+2;I_{t+2})}(X^{2^{n-t-1}-1})(X_{t+1}^{2^{d-n+t}-1}X_t^{2^{d -n+t}}x^{2^{d-n+t+1}})^{2^{n-t-1}}. \end{align*}
According to Lemma \ref{bdad}, 
$$X_{t+1}^{2^{d-n+t}-1}X_t^{2^{d -n+t}}x^{2^{d-n+t+1}}\simeq_2 X_t^{2^{d -n+t+1}-2}X_{t+1}x^{2^{d-n+t+1}}.$$
Hence,  using Proposition \ref{mdcb4}, one gets
\begin{align*}Z &\simeq_{n-t+1} 
 \phi_{(t+2;I_{t+2})}(X^{2^{n-t-1}-1})(X_{t}^{2^{d-n+t+1}-2}X_{t+1}x^{2^{d-n+t+1}})^{2^{n-t-1}}\\
&= \phi_{(t;I_{t})}(X^{2^{d}-1})x^{2^{d}}. \end{align*}

Now, suppose that $h> t+1$ and $u > 1$. Set $d' = d-u+1$. Combining Proposition \ref{mdcb4} and the inductive hypothesis gives
\begin{align*}Z &=  \phi_{(t;I_{t})}(X^{2^{d'-1}-1})X_{h}^{2^{d'}-2^{d'-1}}(X_{h}^{2^{d-d'} - 1}x^{2^{d-d'}})^{2^{d'}}\\ & \simeq_{n-t+1}\phi_{(t;I_{t})}(X^{2^{d'}-1})(X_{h}^{2^{d-d'} - 1}x^{2^{d-d'}})^{2^{d'}}\\ &= \phi_{(t;I_{t})}(X^{2^{d-u+1}-1})X_{h}^{2^{d} - 2^{d-u+1}}x^{2^d}
\simeq_{n-t+1} \phi_{(t;I_{t})}(X^{2^{d}-1})x^{2^d}. \end{align*}
The proposition is proved.
\end{proof}

\begin{prop}\label{bdbss1} For any  integer $d \geqslant n\geqslant 2$, 
$$X_n^{2^d-1}x_n^{2^d} \simeq_{n}\sum_{(i;I) \in \mathcal N_{n - 1}}\phi_{(i;I\cup n)}(X^{2^d-1})x_n^{2^d}.$$
\end{prop}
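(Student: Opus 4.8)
The plan is to bootstrap from the case $d=n$, which is already available, and then to raise the exponent $2^n-1$ to $2^d-1$ while holding the index of $\simeq$ fixed at $n$ throughout. First I observe that the case $d=n$ is nothing but Proposition~\ref{bdbss} with $\ell=n$: there the tail sum $\sum_{u=\ell+1}^n$ is empty and the double sum collapses to its single $u=n$ term, so $X_n^{2^n-1}x_n^{2^n}\simeq_n\sum_{(i;I)\in\mathcal N_{n-1}}\phi_{(i;I\cup n)}(X^{2^n-1})x_n^{2^n}$. This is the seed of the argument.

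Next I would lift this to arbitrary $d\geqslant n$ by freezing a $2^n$-th power. Writing $X_n^{2^d-1}x_n^{2^d}=(X_n^{2^n-1}x_n^{2^n})\big(X_n^{2^{d-n}-1}x_n^{2^{d-n}-1}\big)^{2^n}$ and noting that the weight vector of $X_n^{2^n-1}x_n^{2^n}$ is $((n-1)^{(n)},1)$, so that $\omega_i\leqslant 1$ for $i>n$, Proposition~\ref{mdcb4}(i) (applied with $s=t=n$) carries the seed relation across the frozen factor without raising the index. After the elementary simplification $x_n^{2^n}\big(X_n^{2^{d-n}-1}x_n^{2^{d-n}-1}\big)^{2^n}=X_n^{2^d-2^n}x_n^{2^d}$ this yields
$$X_n^{2^d-1}x_n^{2^d}\simeq_n\sum_{(i;I)\in\mathcal N_{n-1}}\phi_{(i;I\cup n)}(X^{2^n-1})\,X_n^{2^d-2^n}x_n^{2^d}.$$

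It then remains to prove the single-term identity $\phi_{(i;I\cup n)}(X^{2^n-1})X_n^{2^d-2^n}x_n^{2^d}\simeq_n\phi_{(i;I\cup n)}(X^{2^d-1})x_n^{2^d}$ for each $(i;I)\in\mathcal N_{n-1}$, where $r=\ell(I)\leqslant n-2$. I would prove it by detouring through $\phi_{(i;I)}(X^{2^{r+1}-1})$: applying Lemma~\ref{bdbss2} with $u=n$ and $h=r+1$ at the two target exponents $2^n-1$ and $2^d-1$ gives $\phi_{(i;I)}(X^{2^{r+1}-1})X_n^{2^n-2^{r+1}}\simeq_{r+2}\phi_{(i;I\cup n)}(X^{2^n-1})$ and $\phi_{(i;I)}(X^{2^{r+1}-1})X_n^{2^d-2^{r+1}}\simeq_{r+2}\phi_{(i;I\cup n)}(X^{2^d-1})$. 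Multiplying the first by $X_n^{2^d-2^n}x_n^{2^d}=\big(X_n^{2^{d-n}-1}x_n^{2^{d-n}}\big)^{2^n}$ and the second by $x_n^{2^d}$—both legitimate by Proposition~\ref{mdcb4}(i) once the (easy) weight-vector side conditions are checked—reduces both right-hand sides to the common monomial $\phi_{(i;I)}(X^{2^{r+1}-1})X_n^{2^d-2^{r+1}}x_n^{2^d}$. Chaining the two relations gives the single-term identity at index $r+2$, and since $r+2\leqslant n$ this upgrades to $\simeq_n$. Summing over $(i;I)\in\mathcal N_{n-1}$ then completes the proof.

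The main obstacle is precisely this single-term identity. The subtlety is that the bookkeeping factor produced by the lifting step is $X_n^{2^d-2^n}$, whereas literal equality $\phi_{(i;I\cup n)}(X^{2^n-1})X_i^{2^d-2^n}=\phi_{(i;I\cup n)}(X^{2^d-1})$ would require the factor $X_i^{2^d-2^n}$ instead; the two monomials differ by moving the exponent $2^d-2^n$ from $x_i$ to $x_n$. Rather than performing this transfer by hand (which would amount to repeated use of Lemma~\ref{bdad}), the clean route is the Lemma~\ref{bdbss2} detour above, which dissolves the discrepancy uniformly for every index. I note that for the ``full tail'' indices $i=t$, $I=(t+1,\dots,n-1)$ the same single-term identity is exactly Proposition~\ref{bdbss0} with $u=d-n$, $h=n$, $x=x_n$, giving $\simeq_{n-t+1}$; the Lemma~\ref{bdbss2} argument has the advantage of covering all indices $(i;I)$ at once.
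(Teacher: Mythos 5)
Your proof is correct. The seed and the lifting step coincide with the paper's own proof: the paper also starts from Proposition \ref{bdbss} with $\ell=n$ (where the tail sum is empty) and also multiplies by $X_\emptyset^{2^d-2^n}=\big(X_n^{2^{d-n}-1}x_n^{2^{d-n}-1}\big)^{2^n}$ via Proposition \ref{mdcb4}(i) to reach $\sum_{(i;I)}\phi_{(i;I\cup n)}(X^{2^n-1})X_n^{2^d-2^n}x_n^{2^d}$. Where you genuinely diverge is the single-term identity. The paper disposes of it by a three-way case analysis: for $I=\emptyset$ it applies Lemma \ref{bdad} directly; for the full-tail index $(i;I\cup n)=(1;I_1)$ it invokes Proposition \ref{bdbss0}; and in the remaining cases it factors off a $2^{n-s}$-th power, rewrites the inner factor by Lemma \ref{bdad}, and reassembles via Lemma \ref{hq0} and Proposition \ref{mdcb4}, landing at index $n-s+2=r+3\leqslant n$. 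Your detour through $\phi_{(i;I)}(X^{2^{r+1}-1})$ replaces all of this with two applications of Lemma \ref{bdbss2} (at target exponents $2^n-1$ and $2^d-1$, both valid since $\mathcal N_{n-1}\subset\mathcal N_n$, $r+1\leqslant n\leqslant d$ and $i<n$), followed by multiplication through Proposition \ref{mdcb4}(i) and transitivity of $\simeq_{(r+2,\omega)}$ at the common weight vector $\omega=((n-1)^{(d)},1)$ of the common monomial $\phi_{(i;I)}(X^{2^{r+1}-1})X_n^{2^d-2^{r+1}}x_n^{2^d}$. This buys uniformity (no case split, and Proposition \ref{bdbss0} is not needed at all) and even a marginally better index ($r+2$ versus the paper's $r+3$ in the generic case); what the paper's route buys is that it works directly with the monomials at hand, transferring the exponent from $x_n$ to $x_i$ by explicit use of Lemma \ref{bdad}, so it never needs the two multiplications and the same-weight-vector bookkeeping that your chaining argument requires (and which you correctly flag as side conditions to check -- they do hold, since the relevant monomials have weight vectors $((n-1)^{(n)})$, $((n-1)^{(d)})$ with all higher entries zero).
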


\begin{proof} By Proposition \ref{bdbss}, we have
$$X_n^{2^n-1}x_n^{2^n} \simeq_{n} \sum_{(i;I) \in \mathcal N_{n - 1}}\phi_{(i;I\cup n)}(X^{2^n-1})x_n^{2^n}.$$
Hence,  using Proposition \ref{mdcb4}(i), we get
$$X_n^{2^d-1}x_n^{2^d} = X_n^{2^n-1}x_n^{2^n}X_\emptyset^{2^d - 2^n} \simeq_{n} \sum_{(i;I) \in \mathcal N_{n - 1}}\phi_{(i;I\cup n)}(X^{2^n-1})X_n^{2^d - 2^n}x_n^{2^d}.$$

Let $(i;I) \in \mathcal N_{n - 1}$. If $I = \emptyset$, then using Lemma \ref{bdad}, we have 
\begin{align*}&\phi_{(i;I\cup n)} (X^{2^{n}-1})X_n^{2^{d}-2^n}x_n^{2^d} = \phi_{(i;n)} (X^{2^{n}-1})X_n^{2^{d}-2^n}x_n^{2^d}\\
&= X_i^{2^n-2}X_n^{2^{d}-2^n+1}x_n^{2^d}
 \simeq_2 X_i^{2^d-2}X_nx_n^{2^d}
=  \phi_{(i;I\cup n)} (X^{2^{d}-1})x_n^{2^d}.
\end{align*}
If $I = (i_1, \ldots , i_r) , r > 0$, then $s = n - \ell (I\cup n) > 0$. Hence,
\begin{align*}Y :&= \phi_{(i;I\cup n)} (X^{2^{n}-1})X_n^{2^{d}-2^n}x_n^{2^d} \\ &= 
\phi_{(i_1;I\cup n\setminus i_1)} (X^{2^{n-s}-1})(X_i^{2^s-1}X_n^{2^{d -n+s}-2^s}x_n^{2^{d-n+s}})^{2^{n-s}}.
\end{align*}
 By Lemma \ref{bdad}, 
$$X_i^{2^s-1}X_n^{2^{d - n +s}-2^s}x_n^{2^{d-n+s}} \simeq_2 X_i^{2^{d - n +s}-2}X_nx_n^{2^{d-n+s}}.$$

If $(i;I\cup n) = (1;I_1)$, then using Proposition \ref{bdbss0}, we have
\begin{align*}
\phi_{(1;I_1)} (X^{2^{n}-1})X_n^{2^{d}-2^n}x_n^{2^d} \simeq_{n}  \phi_{(1;I_1)} (X^{2^{d}-1})x_n^{2^d}.
\end{align*}

If $(i;I\cup n) \ne (1;I_1)$, then $s \geqslant 2$. Using Proposition \ref{mdcb4} and Lemma \ref{hq0}, we obtain
\begin{align*}
Y&\simeq_{n-s+2} \phi_{(i_1;I\cup n\setminus i_1)} (X^{2^{n-s}-1})(X_nX_i^{2^{d -n+s}-2}x_n^{2^{d-n+s}})^{2^{n-s}}\\
&= \Big(\phi_{(i_1;I\cup n\setminus i_1)}(X^{2^{n-s}-1})X_n^{2^{n-s}}X_i^{2^{n-s+1}}\Big)X_i^{2^{d}-2^{n-s+2}}x_n^{2^d}\\
&\simeq_{n-s+1} \phi_{(i;I\cup n)}(X^{2^{n-s+2}-1})X_i^{2^{d}-2^{n-s+2}}x_n^{2^d}
 = \phi_{(i;I\cup n)}(X^{2^{d}-1})x_n^{2^d}.
\end{align*}
Since $n-s+2 \leqslant n$, the proposition follows.
\end{proof}

Now, using the above results we prove the following which has been proved in \cite{su5} by another method.
Denote by $I_t =(t+1, t+2, \ldots, n)$ for $1 \leqslant t  \leqslant n$. Set  
$$Y_t = Y_{n,t} = \sum_{u=t}^n\phi_{(t;I_t)}(X^{2^{d}-1})x_u^{2^{d}},\  d > n - t + 1.$$ 

\begin{prop}[See \cite{su5}]\label{hq4} For $1 < t \leqslant n$, 
$$Y_t \simeq_{(n - t + 1,\omega)} \sum_{(j;J)}\phi_{(j;J)}(X^{2^d-1})x_j^{2^d},$$ 
where the sum runs over some $(j;J)\in \mathcal N_n$ with $1 \leqslant j < t$, $J \subset I_{t-1}$, $J \ne I_{t-1}$ and $\omega=\omega(X_1^{2^d-1}x_1^{2^d})$. 
\end{prop}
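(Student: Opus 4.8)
The plan is to descend the pivot index from $t$ to values $j<t$ while staying inside the single weight space $P_n(\omega)$. Since $\phi_{(i;I)}$ preserves weight vectors, every monomial occurring in $Y_t=\phi_{(t;I_t)}(X^{2^d-1})\sum_{u=t}^n x_u^{2^d}$ has weight vector $\omega=((n-1)^{(d)},1)=\omega(X_1^{2^d-1}x_1^{2^d})$, and the same holds for each $\phi_{(j;J)}(X^{2^d-1})x_j^{2^d}$ on the right. So the whole computation lives in $P_n(\omega)$, every term of strictly smaller weight produced along the way is absorbed into $P_n^-(\omega)$, and it suffices to exhibit the asserted congruence modulo $\mathcal A^+_{n-t+1}P_n+P_n^-(\omega)$. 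I would carry out a descending induction on $t$, the base case being $t=n$: there $I_n=\emptyset$, $Y_n=X_n^{2^d-1}x_n^{2^d}$, and the single Cartan identity
$$Sq^1\Big(\prod_{i=1}^n x_i^{2^d-1}\Big)=\sum_{j=1}^n X_j^{2^d-1}x_j^{2^d}$$
shows $Y_n\simeq_{(1,\omega)}\sum_{j=1}^{n-1}\phi_{(j;\emptyset)}(X^{2^d-1})x_j^{2^d}$, which is exactly the claimed shape, with $j<n$, $J=\emptyset\subsetneq I_{n-1}$ and $n-t+1=1$.

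For the inductive step at $t<n$ the engine is a family of explicit, low-order Steenrod-square relations. The essential mechanism is that, although the exponent $2^d$ occupies the top dyadic level, it can be transported by operations $Sq^{j}$ with $j<2^{\,n-t+1}$ through carry cascades acting on exponents of the form $2^d-2^{k}$; the packaged instance of this is Lemma \ref{bdad} (an order-two carry identity) and, at the full budget, the reassembly Proposition \ref{bdbss0}, whose conclusion is exactly $\simeq_{n-t+1}$. Using these, together with the order-one relation $\sum_{j=1}^n X_j^{2^d-1}x_j^{2^d}=Sq^1\big(\prod_i x_i^{2^d-1}\big)\in\mathcal A^+_1P_n$ for the pure-power pieces and Proposition \ref{mdcb4} to lift low-degree relations to degree $d$ without raising the $\simeq$-index past $n-t+1$, I would rewrite the single pivot-$t$ block. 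The output splits into terms of pivot $<t$ (collected into the target), terms of pivot $>t$ (governed by finer relations, of $\simeq$-index $n-t'+1<n-t+1$, hence compatible with the inductive hypothesis), and residual terms of pivot exactly $t$ (eliminated by feeding them back through the same low-order relations). The index sets are then reorganized into the shape $\sum_{(j;J)}\phi_{(j;J)}(X^{2^d-1})x_j^{2^d}$ by the identity (\ref{rela1}) and, where the pivots are already small, by the fundamental relation of Proposition \ref{bdbss} (equivalently Corollary \ref{hq3}).

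Here it helps that the statement only asserts the existence of \emph{some} admissible family of pairs $(j;J)$: one never has to compute coefficients, only to confine every surviving monomial to the region $j<t$, $J\subset I_{t-1}$, $J\ne I_{t-1}$. Tracking that the pivot strictly drops below $t$, and that the appended set never exhausts $I_{t-1}=(t,t+1,\ldots,n)$, is part of the bookkeeping carried by the identity (\ref{rela1}).

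The step I expect to be the main obstacle is the simultaneous control of two invariants through the telescoping sums: the combinatorial constraint $j<t,\ J\subsetneq I_{t-1}$ on the one hand, and the arithmetic constraint that \emph{every} relation invoked lie in $\mathcal A^+_{n-t+1}P_n$ on the other. Because the bit $2^d$ can be moved only through the bottom $n-t+1$ dyadic levels, the margin is tight and is exactly calibrated by Proposition \ref{bdbss0}; keeping the residual pivot-$t$ monomials, and the stray pivot-$(>t)$ monomials (which need not themselves be literally of the clean form $Y_{t'}$), within this margin is the delicate part. Resolving it requires either a modest strengthening of the inductive hypothesis to a class of monomials broader than the $Y_{t'}$, or a direct solution of the finite system of low-order relations in $P_n(\omega)$; once either is in place, the congruence for $Y_t$ follows.
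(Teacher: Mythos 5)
Your base case $t=n$ is correct, but the inductive step is where the proposition actually lives, and you have not proved it: you yourself flag the ``simultaneous control of two invariants'' as the main obstacle and defer its resolution to ``either a modest strengthening of the inductive hypothesis or a direct solution of the finite system of low-order relations,'' without doing either. That unresolved step \emph{is} the proposition, so the proposal is a plan with a hole at its only hard point. Moreover the gap is not merely one of detail, because the tools you cite pull in the wrong direction. The relations built from Proposition \ref{bdbss}, Corollary \ref{hq3}, Proposition \ref{bdbss0} and the identity (\ref{rela1}) generate correction terms of the form $\phi_{(i;I\cup I_t\cup u)}(X^{2^d-1})x_u^{2^d}$, whose appended sets \emph{contain} $I_t$ --- this is exactly how they are used to prove relation (\ref{rela3}) inside Lemma \ref{bd5}, and it moves you upward in the index-set lattice. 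The proposition instead demands terms with pivot $j<t$ and $J\subset I_{t-1}$, $J\ne I_{t-1}$, i.e.\ strictly downward, and nothing in your telescoping scheme manufactures such terms; so even granting the inductive hypothesis for all $t'>t$, the expansion you describe does not reorganize into the required shape, and the refined index $n-t+1$ is nowhere accounted for.

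The paper closes this gap by a mechanism absent from your sketch, and in fact does not induct on $t$ at all. One factors $Y_t=Z^{2^d-1}Y_1$ with $Z=x_1x_2\ldots x_{t-1}$ and $Y_1$ the analogous sum in $\mathbb F_2[x_t,\ldots,x_n]$, applies Lemma \ref{bd5} in those $n-t+1$ variables to write $Y_1=\sum_{0\leqslant i\leqslant n-t}Sq^{2^i}(y_i)+h$ with $h$ of lower weight, and then pushes $Z^{2^d-1}$ through the squares by the Cartan formula. The terms $Sq^{2^i}(Z^{2^d-1}y)$ are absorbed into $\mathcal A^+_{n-t+1}P_n$ (every square occurring is $Sq^v$ with $v\leqslant 2^{n-t}<2^{n-t+1}$, which is precisely where the refined index comes from), the mixed Cartan terms with $1\leqslant v<2^i$ fall into $P_n^-(\omega)$, and the only survivors are the cross terms $Sq^{2^i}(Z^{2^d-1})y=\sum_{0<j<t}Z^{2^d-1}x_j^{2^i}y$; Lemma \ref{hq0} then identifies each survivor as $\phi_{(j;J)}(X^{2^d-1})x_j^{2^d}$ with $\ell(J)\leqslant i\leqslant n-t<\ell(I_{t-1})$, which is the only place the constraints $j<t$ and $J\ne I_{t-1}$ are ever verified. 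If you want to rescue your descent on $t$, you would have to build this multiplicative factorization into the inductive hypothesis; as written, the proposal does not contain a proof.
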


\begin{lem}\label{bd5} For $d\geqslant n$,
$Y_1 = \sum_{u=1}^n\phi_{(1;I_1)}(X^{2^d-1})x_u^{2^d} \simeq_{(n,\omega)} 0$, with  $\omega = {\omega}(X_1^{2^d-1}x_1^{2^d})$. More precisely, 
$$Y_1 = \sum_{0\leqslant i < n} Sq^{2^i}(y_i) + h,$$
with $y_i$ polynomials in $P_n$,  and $h \in P_n^-({\omega})$.
\end{lem}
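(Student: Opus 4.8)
The plan is to put $Y_1$ into an explicit normal form, extract the leading Steenrod square by a Frobenius factorisation, and then peel off the remaining squares by a downward recursion, tracking weight vectors throughout. Unwinding Definition \ref{dfn1} with $(i;I)=(1;I_1)$, so that $r=\ell(I_1)=n-1$, one first checks that $X^{2^d-1}$ is $1$-compatible with $(1;I_1)$ and that
$$\phi_{(1;I_1)}(X^{2^d-1})=x_1^{2^{n-1}-1}\prod_{j=2}^{n}x_j^{2^d-1-2^{n-j}}.$$
Since the exponent of every $x_u$ here is $<2^d$, multiplying by $x_u^{2^d}$ produces no carrying, so each summand of $Y_1$ has weight vector $\tilde\omega_{(n,d)}=((n-1)^{(d)},1)=\omega$; thus $Y_1\in P_n(\omega)$ and it suffices to realise the stated exact identity.

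Next I would factor out the top bits. Splitting each exponent at position $n-1$ and using $\big(\sum_u a_u\big)^{2^{n-1}}=\sum_u a_u^{2^{n-1}}$ in characteristic two, one gets, with $d'=d-n+1$,
$$Y_1=L\cdot H^{2^{n-1}},\qquad L=\phi_{(1;I_1)}(X^{2^{n-1}-1}),\qquad H=X_1^{2^{d'}-1}\sum_{u=1}^{n}x_u^{2^{d'}},$$
where $L$ has all exponents $<2^{n-1}$, hence $\omega_i(L)=0$ for $i>n-1$. The starting point is the identity $Sq^1\big(X_\emptyset^{2^{d'}-1}\big)=\sum_{j=1}^{n}\big(\prod_{k\ne j}x_k^{2^{d'}-1}\big)x_j^{2^{d'}}$, whose $j=1$ term is exactly the $u=1$ summand of $H$. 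Writing $H=Sq^1(w)+H_1$ with $w=X_\emptyset^{2^{d'}-1}$, the Cartan computation
$$Sq^{2^{n-1}}\big(Lw^{2^{n-1}}\big)=L\,(Sq^1 w)^{2^{n-1}}+Sq^{2^{n-1}}(L)\,w^{2^{n-1}}$$
(the only surviving cross term, because $Sq^b(w^{2^{n-1}})=0$ unless $2^{n-1}\mid b$) lets me take $y_{n-1}=Lw^{2^{n-1}}$ and conclude $Y_1=Sq^{2^{n-1}}(y_{n-1})+Sq^{2^{n-1}}(L)\,w^{2^{n-1}}+L\,H_1^{2^{n-1}}$. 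For $n=2$ this already yields the correct $y_1=x_1^{2^d-1}x_2^{2^d-2}$ and makes the error term $Sq^{2^{n-1}}(L)w^{2^{n-1}}$ vanish.

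It then remains to absorb $Sq^{2^{n-1}}(L)\,w^{2^{n-1}}$ into the lower squares and $P_n^-(\omega)$, and to reduce $L\,H_1^{2^{n-1}}$ using $Sq^{2^i}$ with $i<n-1$. I would treat $L\,H_1^{2^{n-1}}$ by the same mechanism applied to the smaller block $H_1$, peeling one square at each stage and inducting on $d$ with base $d=n$ (where $2^{n-1}<2^n$). The discrepancies between the `bump' monomials created by $Sq^1$ and the genuine summands of $H$ are rewritten by Lemma \ref{bdad}, Lemma \ref{hq0} and Proposition \ref{mdcb4}, exactly in the spirit of the proof of Proposition \ref{hq4}, of which Lemma \ref{bd5} is the terminal $t=1$ case.

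The hard part will be the weight bookkeeping in the last step. Every leftover monomial again has weight $\omega$; indeed $H$ literally contains the spike $X_1^{2^{d'}-1}x_n^{2^{d'}}$ (its $u=n$ summand), and spikes are admissible by Lemma \ref{bdbs}, so no single top-weight monomial can be discarded on its own. The reduction to $P_n^-(\omega)$ must therefore be arranged \emph{globally}, as a cancellation among top-weight classes in $QP_n(\omega)$, and it must be organised so that only squares $Sq^{2^i}$ with $i<n$ ever appear. Confirming that the accumulated error lands in $P_n^-(\omega)$ rather than merely in $P_n(\omega)$ is the crux; this is where the hypothesis $d\ge n$ and the smallness of the exponents of $L$ (which keeps $Sq^{2^{n-1}}(L)\,w^{2^{n-1}}$ controllable) are used, and collecting the resulting operations gives the desired $Y_1=\sum_{0\le i<n}Sq^{2^i}(y_i)+h$ with $h\in P_n^-(\omega)$.
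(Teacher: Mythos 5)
Your opening moves are sound: the explicit formula for $\phi_{(1;I_1)}(X^{2^d-1})$, the check that every summand of $Y_1$ lies in $P_n(\omega)$ with $\omega=((n-1)^{(d)},1)$, the factorisation $Y_1=L\,H^{2^{n-1}}$, and the Cartan--Frobenius identity
$Y_1=Sq^{2^{n-1}}(Lw^{2^{n-1}})+Sq^{2^{n-1}}(L)\,w^{2^{n-1}}+L\,H_1^{2^{n-1}}$
are all correct. (One slip: by Definition \ref{dfn1} the expression $\phi_{(1;I_1)}(X^{2^{n-1}-1})$ is literally $0$, because condition ii) of compatibility requires $\nu_1(X^{2^{n-1}-1})>2^{n-1}-1$, which fails; you must define $L$ as the explicit monomial $x_1^{2^{n-1}-1}\prod_{j=2}^{n}x_j^{2^{n-1}-1-2^{n-j}}$, which is what your formula produces.) But beyond this point you have a plan, not a proof, and you say so yourself: showing that $Sq^{2^{n-1}}(L)\,w^{2^{n-1}}+L\,H_1^{2^{n-1}}$ lies in $\mathcal A_{n-1}^{+}P_n+P_n^-(\omega)$ is the entire content of the lemma, and you defer it as ``the crux'' to an unspecified global cancellation.

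Worse, the recursion you sketch is structurally circular, so it cannot close the gap as described. With your notation $d'=d-n+1$, $w=X_\emptyset^{2^{d'}-1}$ and $H_1=H+Sq^1(w)$, one has $L\cdot\bigl(X_1^{2^{d'}-1}\bigr)^{2^{n-1}}=\phi_{(1;I_1)}(X^{2^d-1})$, hence
\begin{equation*}
L\,H_1^{2^{n-1}}=\sum_{u=2}^{n}\phi_{(1;I_1)}(X^{2^d-1})\,x_u^{2^d}
\;+\;\sum_{j=2}^{n}L\,X_j^{(2^{d'}-1)2^{n-1}}x_j^{2^d},
\end{equation*}
that is, the remainder contains $n-1$ of the $n$ summands of $Y_1$ verbatim. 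Your peeling step therefore only trades the $u=1$ summand of $Y_1$ for the error terms; no parameter decreases (in particular ``inducting on $d$'' cannot work, since every term stays in the same degree with the same block structure), so ``the same mechanism applied to the smaller block $H_1$'' never terminates. This regeneration of transformed terms is exactly the difficulty the paper's proof is built to handle: it establishes, by induction on the position index $t$ rather than on $d$, the relation (\ref{rela3}), $Y_1 \simeq_{(n,\omega)} Y_t + \sum_{u=t}^n\sum_{(i;I)\in\mathcal N_{t-1}}\phi_{(i;I\cup I_{t-1})}(X^{2^{d}-1})x_u^{2^{d}}$, organising the regenerated terms into the family indexed by $\mathcal N_{t-1}$ via Propositions \ref{bdbss}, \ref{bdbss0}, \ref{mdcb4} and Lemmas \ref{hq0}, \ref{bdbss2}; at $t=n$ one has $Y_n=X_n^{2^d-1}x_n^{2^d}$ and Proposition \ref{bdbss1} supplies precisely the cancellation that makes everything vanish. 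Some device of this kind---a bookkeeping scheme for the surviving terms together with a genuinely decreasing induction parameter---is what your proposal lacks, and without it the argument does not go through.
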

\begin{proof} First, we prove the following relation by induction on $t \geqslant 1.$
\begin{equation}\label{rela3}
Y_1 \simeq_{(n,\omega)} Y_t + \sum_{u=t}^n \sum_{(i;I) \in \mathcal N_{t-1}}\phi_{(i;I\cup I_{t-1} )} (X^{2^{d}-1})x_u^{2^{d}}.
\end{equation}

Since $\mathcal N_0 = \emptyset$, the relation (\ref{rela3}) is true for $t=1$.  Note that if $1\leqslant t <n$, then
$$\phi_{(t;I_t)}(X^{2^{d}-1})x_t^{2^{d}} = \phi_{(t+1;I_{t+1})}(X^{2^{n-t}-1})(X_t^{2^{t}-1}x_t^{2^{t}}X_\emptyset^{2^{d-n+t}-2^t})^{2^{n-t}}.$$

Applying Proposition \ref{bdbss} and Proposition \ref{mdcb4}, we have
\begin{align*}X_t^{2^{t}-1}x_t^{2^{t}}&X_\emptyset^{2^{d-n+t}-2^t} \simeq_{t}  \sum_{u=t+1}^nX_u^{2^{d-n+t}-1}x_u^{2^{d-n+t}}\\ 
&+ \sum_{u=t}^n\sum_{(i;I) \in \mathcal N_{t - 1}}\phi_{(i;I\cup u)}(X^{2^t-1})X_u^{2^{d - n + t}-2^t}x_u^{2^{d - n + t}} .
\end{align*}
Using Proposition \ref{bdbss0}, we obtain
$$\phi_{(t+1;I_{t+1})}(X^{2^{n-t}-1})X_u^{2^{d}-2^{n-t}}x_u^{2^{d}} \simeq_{n-t} \phi_{(t+1;I_{t+1})}(X^{2^{d}-1})x_u^{2^{d}}.$$
Applying Lemma \ref{hq0} and Proposition \ref{mdcb4}, we have
\begin{align*}\phi_{(t+1;I_{t+1})}(X^{2^{n-t}-1})&(\phi_{(i;I\cup u)}(X^{2^t-1}))^{2^{n-t}}X_u^{2^{d}-2^n}x_u^{2^{d}}\\
&\simeq_{n-1} \phi_{(i;I\cup I_t\cup u)}(X^{2^n-1}))X_u^{2^{d}-2^n}x_u^{2^{d}}
\end{align*}
Combining the above equalities and Proposition \ref{mdcb4} gives
$$\phi_{(t;I_t)}(X^{2^{d}-1})x_t^{2^{d}}\simeq_{n}  Y_{t+1} 
+ \sum_{u=t}^n\sum_{(i;I) \in \mathcal N_{t - 1}}\phi_{(i;I\cup I_t\cup u)}(X^{2^n-1})X_u^{2^{d}-2^n}x_u^{2^{d}} .
$$
If  $(i;I\cup I_t\cup u) = (1;I_1)$, then by Proposition \ref{bdbss0}, we have
$$
\phi_{(1;I_1)}(X^{2^n-1})X_u^{2^{d}-2^n}x_u^{2^{d}} \simeq_n \phi_{(1;I_1)}(X^{2^{d} -1})x_u^{2^{d}}.
$$
Suppose $(i;I\cup I_t\cup u) \ne (1;I_1)$. Then, $0< s = \ell(I\cup I_t\cup u)\leqslant n-2$. Using Lemma \ref{bdbss2}, we have
\begin{align*}\phi_{(1;I_1)}(X^{2^n-1})X_u^{2^{d}-2^n}x_u^{2^{d}} \simeq_{s+2} \phi_{(i;I\cup I_t\cup u)}(X^{2^d-1})x_u^{2^{d}}. 
\end{align*}

Combining the above equalities gives
\begin{align*}\phi_{(t;I_t)}(X_t^{2^{d}-1})x_t^{2^{d}}
&\simeq_{n} Y_{t+1} + \sum_{u=t}^n \sum_{(i;I) \in \mathcal N_{t-1}}\phi_{(i;I\cup I_{t}\cup u)} (X^{2^{d}-1})x_u^{2^{d}}.
\end{align*}

Note that  
$I_t \cup t= I_{t-1}$, $I_t \cup u =I_t$ for $u > t$, and 
$\mathcal N_t = \mathcal N_{t-1}\cup   (\mathcal N_{t-1}\cup t) \cup\{(t;\emptyset)\}$. 

Computing from the last equalities and using the inductive hypothesis, we obtain 
\begin{align*}
Y_1 &\simeq_{(n,\omega)} Y_{t+1}  + \sum_{u=t}^n \sum_{(i;I) \in \mathcal N_{t-1}}\phi_{(i;I\cup I_{t}\cup u)} (X^{2^{d}-1})x_u^{2^{d}} \\
&\quad + \sum_{u=t+1}^n\phi_{(t;I_t)}(X^{2^{d}-1})x_u^{2^{d}}
+  \sum_{u=t}^n \sum_{(i;I) \in \mathcal N_{t-1}}\phi_{(i;I\cup I_{t-1} )} (X^{2^{d}-1})x_u^{2^{d}} \\
&= Y_{t+1} + \sum_{u=t+1}^n \sum_{(i;I) \in \mathcal N_{t-1}}\phi_{(i;I\cup I_{t} )} (X^{2^{d}-1})x_u^{2^{d}}\\
&\quad 
+ \sum_{u=t+1}^n \sum_{(i;I) \in \mathcal N_{t-1}\cup t}\phi_{(i;I\cup I_{t})} (X^{2^{d}-1})x_u^{2^{d}} + \sum_{u=t+1}^n\phi_{(t;I_t)}(X^{2^{d}-1})x_u^{2^{d}}\\
&= Y_{t+1} + \sum_{u=t+1}^n \sum_{(i;I) \in \mathcal N_{t}}\phi_{(i;I\cup I_{t} )} (X^{2^{d}-1})x_u^{2^{d}}.
\end{align*}
The relation (\ref{rela3}) is proved.

Since $Y_n = X_n^{2^d-1}x_n^{2^d}$, using  the relation (\ref{rela3}) with $t=n$ and  Proposition \ref{bdbss}, one gets
$$Y_1 \simeq_{(n,\omega)} X_n^{2^d-1}x_n^{2^d} + \sum_{(i;I) \in \mathcal N_{n-1}}\phi_{(i;I\cup n)} (X^{2^{d}-1})x_n^{2^{d}} \simeq_{(n,\omega)} 0.$$
The lemma is proved.
\end{proof}

\begin{proof}[Proof of Proposition \ref{hq4}] We have $Y_t = Z^{2^d-1}Y_1$ with $Z = x_1x_2\ldots x_{t-1}$ and $Y_1 = Y_1(x_t, \ldots ,x_n) \in P_{n-t+1}:= \mathbb F_2[x_t, \ldots,x_n]$. Since $d\geqslant n-t+1$, by Lemma \ref{bd5}, $Y_t$ is a sum of polynomials of the form $f= Z^{2^d-1}(Sq^{2^i}(y) + h)$ with $ 0\leqslant i \leqslant  n-t$, $y$ a monomial in $P_{n-t+1}$ and  
$h \in P_{n-t+1}^-(\omega^*)\ \text{ with } \ \omega^* = \omega((x_{t+1} \ldots x_n)^{2^d-1}x_t^{2^d}).$

 Then $Z^{2^d-1}h \in P_{n}^-(\omega)$ with $\omega = \omega(X_1^{2^d-1}x_1^{2^d})$. So, using the Cartan formula, we have
\begin{align*} f &\simeq_{(0,\omega)} Sq^{2^i}(Z^{2^d-1}y) + \sum _{1 \leqslant v \leqslant 2^i}Sq^v(Z^{2^d-1})Sq^{2^i - v}(y).
\end{align*} 
By a direct computation using the Cartan formula, we can easily show that if $1\leqslant v < 2^i$, and a monomial $z$ appears as a term of the polynomial $Sq^v(Z^{2^d-1})Sq^{2^i - v}(y)$, then $\omega_u(z) < n-1$ for some $u\leqslant d$. Hence, using the Cartan formula, one gets 
$$f \simeq_{(i+1,\omega)}  Sq^{2^i}(Z^{2^d-1})y \simeq_{(0,\omega)}  \sum _{0< j < t}Z^{2^d-1}x_j^{2^i}y.$$

Since $\omega_u(Z^{2^d-1}x_j^{2^i}) = t-1$ for $ u \leqslant i$  and $\omega_u(Z^{2^d-1}x_j^{2^i}) = t-2$ for $i < u \leqslant d$, if $Z^{2^d-1}x_j^{2^i}y \notin P_{n}^-(\omega)$, then $\omega_u(y) = n-t$ for $u \leqslant i$ and $\omega_u(y) = n-t+1$ for $i < u \leqslant d$. Hence, we obtain
$$ Z^{2^d-1}x_j^{2^i}y = \Big(\prod _{0\leqslant u \leqslant i-1}X_{j_u}^{2^u}\Big)X_j^{2^i}X_j^{2^d-2^{i+1}}x_j^{2^d},$$
where $j_u \geqslant t$, for $0\leqslant u \leqslant i-1$. By convention, $\prod _{0\leqslant u \leqslant i-1}X_{j_u}^{2^u} = 1$ for $i=0$.

 According to Lemma \ref{hq0}, there is $J \subset \{j_0,\ldots , j_{i-1}\} \subset I_{t-1}$  such that 
$$ \Big(\prod _{0\leqslant u \leqslant i-1}X_{j_u}^{2^u}\Big)X_j^{2^i} \simeq_i \phi_{(j;J)}(X^{2^{i+1}-1}).$$
From the above equalities and Proposition \ref{mdcb4}, we get
$$Z^{2^d-1}x_j^{2^i}y \simeq_{(i+1,\omega)} \phi _{(j;J)}(X^{2^{i+1}-1})X_j^{2^d-2^{i+1}}x_j^{2^d}= \phi _{(j;J)}(X^{2^{d}-1})x_j^{2^d}.$$

We have $\ell(J) \leqslant  i \leqslant  n-t <  n-t+1 = \ell(I_{t-1}) $. Hence, $J \ne I_{t-1}$. Since $i+1 \leqslant n-t+1$, the proposition follows.
\end{proof}

\section{The admissible monomials  of the degree $d = 15.2^s - 5$ in $P_5$}\label{s4}

By using Theorem \ref{dlmd1} we can show that the hit problem is reduced to the case of the degree of the form
$d = t(2^s - 1) + 2^sm$
with $t, s, m$ positive integers such that $\mu(m) < t \leqslant n$  (see \cite{su5}). For $t = n$, the problem has been studied by H\uhorn ng \cite{hu} and by T\'in and Sum \cite{ts}. For $t= n = 5$, it is explicitly computed by T\'in \cite{tin0,ti1,tin} with $m = 1,2,3$ and by the present author \cite{su3} with $m = 5$. These results show that Conjecture \ref{gtom} is true in those cases.

In this section we study this problem for $n= t = 5$ and $m = 10$. More precisely, we explicitly determine the admissible monomials of degree $d=15.2^s-5$ in $P_5$. 

It is easy to see that for $s > 1$, we have $\mu(15.2^s-5) = 5$. Hence, Theorem \ref{dlmd2} implies that
$$(\widetilde{Sq}^0_*)^{s-1}: (QP_5)_{15.2^s-5} \longrightarrow (QP_5)_{25}$$
is an isomorphism of $GL_5$-modules for every $s \geqslant 1$. So, we need only to compute the space $(QP_5)_{15.2^s-5}$ for $s=1$. Since Kameko's homomorphism
$$(\widetilde{Sq}^0_*)_{(5,10)}: (QP_5)_{25} \longrightarrow (QP_5)_{10}$$
is an epimorphism, we have $(QP_5)_{25} \cong \text{Ker}(\widetilde{Sq}^0_*)_{(5,10)} \bigoplus (QP_5)_{10}$.

The admissible monomials of the degree 10 in $P_5$ have been determined by T\'in \cite{tin}. We have $B_5(10) = \{a_{10,t} : 1 \leqslant t \leqslant 280\}$, where the monomials $a_{10,t}$, $1 \leqslant t \leqslant 280$,  are listed in Subsection \ref{s61}.  We now compute $\text{Ker}(\widetilde{Sq}^0_*)_{(5,10)}$.

\begin{lem}\label{bdday}
If $x$  is an admissible monomial of degree $25$ in $P_5$ and  $[x]$ belongs to  $\text{\rm Ker}(\widetilde{Sq}^0_*)_{(5,10)}$, then either $\omega(x) = (3,3,2,1)$ or $\omega(x) = (3,3,4)$. 
\end{lem}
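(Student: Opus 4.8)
The plan is to determine $\omega(x)$ by squeezing it between the minimal spike (from below) and the Kameko/parity constraints (from above), and then to discard the two leftover candidates by an explicit inadmissibility check.

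First I would identify the minimal spike of degree $25$. Since $25=(2^4-1)+(2^3-1)+(2^2-1)$ while $27=2^a+2^b$ has no solution (so $25$ is not a sum of two integers of the form $2^s-1$), we get $\mu(25)=3$, and the minimal spike is $z=x_1^{15}x_2^{7}x_3^{3}$, whose weight vector is $\omega(z)=(3,3,2,1)$. As $x$ is admissible it is not hit, so Theorem \ref{dlsig} forces $\omega(x)\geqslant (3,3,2,1)$ in the left lexicographic order; in particular $\omega_1(x)\geqslant 3$. A parity observation now restricts $\omega_1(x)$: from $\deg\omega(x)=\sum_{i\geqslant 1}2^{i-1}\omega_i(x)\equiv \omega_1(x)\pmod 2$ and $\deg\omega(x)=25$ odd, $\omega_1(x)$ must be odd, hence $\omega_1(x)\in\{3,5\}$.

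Next I would use the Kameko splitting $(QP_5)_{25}\cong \mathrm{Ker}(\widetilde{Sq}^0_*)_{(5,10)}\oplus (QP_5)_{10}$ to eliminate $\omega_1(x)=5$. If $\omega_1(x)=5$ then every exponent of $x$ is odd, so $x=x_1x_2x_3x_4x_5\,y^2$ and $\widetilde{Sq}^0_*([x])=[\psi(x)]=[y]$; since the classes of admissible monomials with $\omega_1=5$ are exactly those spanning the summand $\widetilde{Sq}^0\big((QP_5)_{10}\big)$ that is carried isomorphically onto $(QP_5)_{10}$ (the image of the up map, a section of $\widetilde{Sq}^0_*$), such a class is nonzero under $\widetilde{Sq}^0_*$ and cannot lie in the kernel. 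Hence $\omega_1(x)=3$. Writing $\omega(x)=(3,\omega_2,\omega_3,\dots)$, the degree condition reads $\omega_2+2\omega_3+4\omega_4+8\omega_5=11$, and the bound $\omega(x)\geqslant (3,3,2,1)$ forces $\omega_2\geqslant 3$. The case $\omega_2=4$ is impossible, since it would require $2\omega_3+4\omega_4+8\omega_5=7$; the cases $\omega_2=3$ and $\omega_2=5$ then yield, after a short enumeration, exactly four weight vectors $(3,3,2,1)$, $(3,3,4)$, $(3,5,3)$, and $(3,5,1,1)$.

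It remains to rule out $(3,5,3)$ and $(3,5,1,1)$, and I expect this to be the main obstacle: both are strictly larger than $\omega(z)$, so Singer's criterion does not reach them, and they must be eliminated by hand. The approach is to prove that every monomial carrying one of these two weight vectors is inadmissible, whence $QP_5(\omega)=0$ for these $\omega$ and no admissible $x$ can have them. Concretely, each such monomial has $\omega_2=5=n$, so its bit-layer-$1$ factor is the full product $x_1x_2x_3x_4x_5$; I would exploit the $\Sigma_5$-action on $QP_5(\omega)$ provided by Lemma \ref{bdm} to reduce the list of monomials to a small set of representatives, and then apply the inadmissibility-propagation Theorem \ref{dlcb1} together with the known strictly inadmissible monomials in the lower layers to rewrite each representative as a sum of smaller monomials modulo $\mathcal A^+P_5$. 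Verifying strict inadmissibility for the finitely many representatives is the computational heart of the argument; once it is complete, the only weight vectors supporting admissible kernel monomials are $(3,3,2,1)$ and $(3,3,4)$, as claimed.
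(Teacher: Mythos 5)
Your reduction to $\omega_1(x)=3$ is sound and is essentially the paper's own argument: the minimal spike $z=x_1^{15}x_2^7x_3^3$ together with Theorem \ref{dlsig} gives $\omega(x)\geqslant(3,3,2,1)$, the parity of the degree forces $\omega_1(x)$ odd, and the Kameko-section argument disposes of $\omega_1(x)=5$ (the paper gets the same conclusion slightly more directly, by applying Theorem \ref{dlcb1} to $x=X_\emptyset y^2$ to see that $y$ is admissible, hence $[y]\ne 0$). Your enumeration of the surviving candidates $(3,3,2,1)$, $(3,3,4)$, $(3,5,3)$, $(3,5,1,1)$ is also correct, provided the lexicographic bound $\omega(x)\geqslant(3,3,2,1)$ is invoked once more in the case $\omega_2=3$ to discard $(3,3,0,2)$ and $(3,3,0,0,1)$; you should make that explicit, since the degree equation alone does not exclude them.

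The genuine gap is the final step. Eliminating $(3,5,3)$ and $(3,5,1,1)$ is, by your own description, ``the computational heart of the argument,'' and you never carry it out: you only outline a strategy (choose $\Sigma_5$-representatives using Lemma \ref{bdm}, exhibit strictly inadmissible factors, propagate with Theorem \ref{dlcb1}). Proving that every degree-$25$ monomial of weight $(3,5,3)$ or $(3,5,1,1)$ is inadmissible, i.e. that $QP_5(3,5,3)=QP_5(3,5,1,1)=0$, is a computation of the same size and nature as the paper's Proposition \ref{md334} (which handles $(3,3,4)$), so asserting its outcome without performing it leaves the lemma unproved; this is precisely the part of the statement that the soft arguments (Singer's criterion, Kameko) cannot reach. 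The paper closes this step without any new computation: since $\omega_1(x)=3$, it factors $x=x_ix_jx_\ell y_1^2$ with $1\leqslant i<j<\ell\leqslant 5$, observes via Theorem \ref{dlcb1} that $y_1$ must itself be an admissible monomial, now of degree $11$, and then quotes T\'in's determination of the admissible monomials of degree $11$ in $P_5$, according to which the weight vectors occurring here are only $(3,2,1)$ and $(3,4)$; this gives $\omega(x)\in\{(3,3,2,1),(3,3,4)\}$ immediately. In your framework the same move would convert your two problem weights into $(5,3)$ and $(5,1,1)$ for $y_1$ and let the cited degree-$11$ classification absorb them. So either perform the strict-inadmissibility computations you promise, or make this reduction to degree $11$ and cite the known result; as written, the proposal stops short of a proof.
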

\begin{proof}  Note that $z = x_1^{15}x_2^7x_3^3$ is the minimal spike of degree $25$ in $P_5$ and  $\omega(z) =  (3,3,2,1)$. Since $[x] \ne 0$, by Theorem \ref{dlsig}, either $\omega_1(x) =3$ or $\omega_1(x) =5$. If $\omega_1(x) =5$, then $x = X_\emptyset y^2$ with $y$ a monomial of degree $10$ in $P_5$. Since $x$ is admissible, by Theorem \ref{dlcb1}, $y$ is also admissible. Hence, $(\widetilde{Sq}^0_*)_{(5,10)}([x]) = [y] \ne 0.$ This contradicts the fact that $[x] \in \text{Ker}(\widetilde{Sq}^0_*)_{(5,10)}$, hence $\omega_1(x) =3$. Then, we have $x = x_ix_jx_\ell y_1^2$ with $1 \leqslant i < j < \ell \leqslant 5$ and $y_1$ an admissible monomial of degree $11$ in $P_5$. According to T\'in \cite{tin0}, either $\omega(y_1) = (3,2,1)$ or $\omega(y_1) = (3,4)$. The lemma follows.
\end{proof}

By Lemma \ref{bdday}, we have $\text{\rm Ker}(\widetilde{Sq}^0_*)_{(5,10)}\cong QP_5(3,3,2,1)\oplus QP_5(3,3,4).$

\begin{prop}\label{md334} $QP_5(3,3,4) = 0$.
\end{prop}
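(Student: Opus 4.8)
The plan is to prove the equivalent statement that there is \emph{no} admissible monomial of weight vector $\omega=(3,3,4)$ in $P_5$; since the admissible monomials of a fixed weight form a basis of the corresponding space $QP_5(\omega)$, this gives $QP_5(3,3,4)=0$. Here $\deg\omega = 3\cdot 2^0 + 3\cdot 2^1 + 4\cdot 2^2 = 25$, and $\omega$ is \emph{not} weakly decreasing (as $\omega_2=3<4=\omega_3$), so by Lemma \ref{bdbs} it is not the weight vector of a spike; in particular Singer's criterion (Theorem \ref{dlsig}) does not apply and a genuine reduction is required.

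First I would shrink the problem drastically. Any monomial $x$ with $\omega(x)=(3,3,4)$ factors canonically as $x = X_{\mathbb{J}_0(x)}\,h^2$, where $h = \prod_{i\geqslant 1}X_{\mathbb{J}_i(x)}^{2^{i-1}}$ satisfies $\omega(h)=(3,4)$ and $X_{\mathbb{J}_0(x)}$ is a squarefree monomial of degree $3$ (so $|\mathbb{J}_0(x)|=2$). Since $\omega_i(X_{\mathbb{J}_0(x)})=0$ for $i>1$, applying Theorem \ref{dlcb1}(i) in contrapositive form, with the squarefree factor in the role of $x$ and $r=1$, shows that if $X_{\mathbb{J}_0(x)}\,h^2$ is admissible then $h$ must be admissible. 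Hence it suffices to range over the finite set $B_5(3,4)$ of admissible monomials of weight $(3,4)$ (degree $11$), already determined by T\'in \cite{tin0}, and for each such $h$ together with each of the ten squarefree degree-$3$ factors $X_{\mathbb{J}_0}$, to prove that $X_{\mathbb{J}_0}\,h^2$ is inadmissible.

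To carry out this last step I would exploit the $\Sigma_5$-action on $QP_5(\omega)$ (Lemma \ref{bdm}) to collapse the case count: up to a permutation of the variables one may normalize the single variable omitted from the degree-$4$ block of $h$ and the shape of $h$, leaving only a handful of genuinely distinct pairs $(X_{\mathbb{J}_0},h)$. For each surviving monomial $x=X_{\mathbb{J}_0}\,h^2$ I would exhibit an explicit reduction $x\equiv_\omega\sum_t y_t$ with $y_t<x$, obtained by writing $x$ as $\sum_j Sq^j(\cdot)$ modulo $P_5^-(\omega)$ via the Cartan formula. The natural tools are Proposition \ref{mdcb4}(ii) (with $s=1$), which lets one split off the squared block $h^2$ while tracking the $\simeq$-level, the product relation of Lemma \ref{bdad}, and Theorem \ref{dlcb1}(ii) for propagating a few strictly inadmissible low-degree ``seed'' monomials up to $P_5$.

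The main obstacle is exactly this final verification: one must confirm that prepending \emph{any} squarefree degree-$3$ factor to an admissible weight-$(3,4)$ monomial always destroys admissibility, i.e.\ that $B_5(3,3,4)=\emptyset$. This is not a single clean identity but an organized case analysis over the symmetry-reduced list $B_5(3,4)$, each case demanding its own explicit Steenrod-square witness. The delicate part is the bookkeeping of which monomials are $\equiv_\omega$-smaller in the order of Definition \ref{defn3}, and checking that none of the candidates persists as admissible; that is where essentially all of the computational content of the proposition resides.
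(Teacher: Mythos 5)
Your proposal follows essentially the same route as the paper: the reduction $x = X_{\mathbb{J}_0}h^2$ with $h$ forced into $B_5(3,4)$ by Theorem \ref{dlcb1}(i), followed by a finite case analysis in which each candidate is killed either by factoring off a low-degree strictly inadmissible seed monomial and invoking Theorem \ref{dlcb1}(ii), or by an explicit Steenrod-square relation modulo $P_5^-(3,3,4)$ (the paper's Lemmas \ref{ina3341}--\ref{ina3343}, with permutation symmetry playing the same role as your $\Sigma_5$-reduction). The plan and the tools are correct; as you note, the remaining content is exactly the computational verification that the paper also delegates to those lemmas.
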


To prove this proposition, we need some lemmas. By a simple computation one gets the two following lemmas.

\begin{lem}\label{ina3341} If $x$ is one of the following monomials, then $f_i(x)$, $ 1 \leqslant i \leqslant 5$, are strictly inadmissible:

\medskip
\centerline{\begin{tabular}{llllll}
$x_1^{2}x_2x_3^{3}x_4^{3}$ &  $x_1^{2}x_2^{3}x_3x_4^{3}$ &  $x_1^{2}x_2^{3}x_3^{3}x_4$ &  $x_1^{3}x_2^{2}x_3x_4^{3}$ &  $x_1^{3}x_2^{2}x_3^{3}x_4$&  
$x_1^{3}x_2^{3}x_3^{2}x_4$ 
\end{tabular}}
\end{lem}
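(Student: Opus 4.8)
The plan is to prove each case by exhibiting a single explicit $Sq^1$-relation that rewrites $f_i(x)$ as a sum of strictly smaller monomials, which is exactly what strict inadmissibility demands. First I would record the common features. Each of the six monomials $x$ is a rearrangement of the exponents $2,1,3,3$, so each has weight vector $\omega(x)=(3,3)$; in particular the top index is $s=2$, and $\mathcal A^+_2$ is spanned by $Sq^1,Sq^2,Sq^3$, so it suffices to produce an equality in $P_5$ of the form $f_i(x)=\sum_t y_t+\sum_{1\leqslant u<2^2}Sq^u(h_u)$ with $y_t<f_i(x)$. Since each $f_i$ merely relabels the variables by an order-preserving injection $\mathbb N_4\hookrightarrow\mathbb N_5$, the position of every exponent relative to the others is unchanged, so all five images can be handled by one uniform computation; I will write $w=f_i(x)$, let $K$ be the position carrying the unique even exponent $2$, and let $M$ be the position carrying the exponent $1$.

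The key observation is that the six monomials in the statement are precisely those for which $K<M$ (the exponent $2$ lies to the left of the exponent $1$), and this inequality is preserved by each $f_i$. Given this, I would set $h=w/x_K$, so that $h$ has exponent $1$ at position $K$ and otherwise the exponents $1,3,3$; hence every exponent of $h$ is odd. Since $Sq^1$ is a derivation with $Sq^1(x_j^{a})=a\,x_j^{a+1}$, applying it to $h$ raises exactly one odd exponent at a time, which gives
\[
Sq^1(h)=w+E_1+E_2+E_3,
\]
where $E_1$ is obtained by raising the exponent $1$ at position $M$ up to $2$ (so that $K$ retains exponent $1$), and $E_2,E_3$ are obtained by raising one of the two exponents $3$ up to $4$. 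All four coefficients are $1$ modulo $2$, so no cancellation occurs and $w$ really appears.

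It then remains only to check $E_1,E_2,E_3<w$ in the order of Definition \ref{defn3}. For $E_2$ and $E_3$ the new exponent $4$ forces the weight vector to drop from $(3,3)$ to $(3,1,1)$, so these terms are smaller by weight vector alone. For $E_1$ the weight vector is still $(3,3)$, and $E_1$ differs from $w$ only by interchanging the exponents at positions $K$ and $M$; because $K<M$, the exponent vector $\sigma(E_1)$ is smaller than $\sigma(w)$ in the left lexicographic order, whence $E_1<w$. Thus $w=Sq^1(h)+E_1+E_2+E_3$ with $E_1,E_2,E_3<w$ and $Sq^1(h)\in\mathcal A^+_2P_5$, a witness to the strict inadmissibility of $f_i(x)$.

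I expect the only genuinely substantive step to be this last order comparison: it is exactly where the hypothesis $K<M$ — equivalently, that $x$ is one of the six listed monomials and not one of the complementary rearrangements of $2,1,3,3$ — is used. For a monomial with $K>M$ the term $E_1$ would instead be \emph{larger} than $w$, and this device would fail, which is consistent with only these six rearrangements appearing in the statement. Everything else is a routine application of the Cartan formula, so the computation is indeed simple.
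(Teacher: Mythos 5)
Your proof is correct. Note that the paper offers no argument for this lemma at all beyond the phrase ``by a simple computation,'' and in the analogous lemmas where computations are displayed (e.g.\ Lemmas \ref{ina3343} and \ref{bd33212}) the method is exactly the one you use: write the monomial as $\sum_u Sq^u(h_u)$ with $1\leqslant u<2^s$ plus monomials that are smaller in the order of Definition \ref{defn3}. What you add is a genuine unification: the single identity $w=Sq^1(w/x_K)+E_1+E_2+E_3$, combined with the observation that the six listed monomials are precisely the arrangements of the exponents $(2,1,3,3)$ in which the exponent $2$ precedes the exponent $1$ --- a property preserved by every order-preserving substitution $f_i$ --- replaces the thirty separate verifications ($6$ monomials $\times$ $5$ embeddings) that a case-by-case computation would require. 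The bookkeeping is also sound: $E_2,E_3$ drop because their weight vector $(3,1,1)$ is lexicographically below $(3,3)$, while $E_1$ has the same weight vector and the comparison $\sigma(E_1)<\sigma(w)$ is exactly where the hypothesis $K<M$ enters (and correctly explains why the complementary arrangements, which are in fact admissible, do not appear in the statement).
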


\begin{lem}\label{ina3342} The following monomials are strictly inadmissible:

\medskip
\centerline{\begin{tabular}{lllll}
$x_1^{2}x_2x_3x_4^{2}x_5^{3}$ &  $x_1^{2}x_2x_3x_4^{3}x_5^{2}$ &  $x_1^{2}x_2x_3^{2}x_4x_5^{3}$ &  $x_1^{2}x_2x_3^{2}x_4^{3}x_5$ &  $x_1^{2}x_2x_3^{3}x_4x_5^{2}$\cr  
$x_1^{2}x_2x_3^{3}x_4^{2}x_5$ &  $x_1^{2}x_2^{2}x_3x_4x_5^{3}$ &  $x_1^{2}x_2^{2}x_3x_4^{3}x_5$ &  $x_1^{2}x_2^{2}x_3^{3}x_4x_5$ &  $x_1^{2}x_2^{3}x_3x_4x_5^{2}$\cr  
$x_1^{2}x_2^{3}x_3x_4^{2}x_5$ &  $x_1^{2}x_2^{3}x_3^{2}x_4x_5$ &  $x_1^{3}x_2^{2}x_3x_4x_5^{2}$ &  $x_1^{3}x_2^{2}x_3x_4^{2}x_5$ &  $x_1^{3}x_2^{2}x_3^{2}x_4x_5$\cr 
\end{tabular}}
\end{lem}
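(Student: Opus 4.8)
The plan is to give, for each of the fifteen monomials, a single explicit straightening identity exhibiting it as strictly inadmissible. Recall (Kameko~\cite{ka}, Sum~\cite{su2}) that a monomial $x$ whose top nonzero weight position is $s$ is strictly inadmissible precisely when $x=\sum_t y_t + g$ for monomials $y_t<x$ in the order of Definition~\ref{defn3} and some $g\in\mathcal A^+_sP_5$. Each monomial in the list has exponent multiset $\{1,1,2,2,3\}$, and a one-line computation of $\omega$ gives $\omega(x)=(3,3)$ for all of them; hence $s=2$ and $\mathcal A^+_2$ is spanned by $Sq^1,Sq^2,Sq^3$. So it is enough to write each $x$ as a sum of monomials $<x$ modulo the image of $Sq^1,Sq^2,Sq^3$.

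First I would show that a single $Sq^1$ already does the job. Let $i$ be the least index with $\nu_i(x)=2$ (such an index exists since $\{1,1,2,2,3\}$ contains two $2$'s), and put $A=x/x_i$. Using $Sq^1(x_j^a)=a\,x_j^{a+1}$ and the Cartan formula, one obtains the exact identity in $P_5$
\[
Sq^1(A)=x+\sum_{j} x\,x_j\,x_i^{-1},
\]
where the sum runs over the indices $j\ne i$ with $\nu_j(x)$ odd. Equivalently $x-\sum_j x\,x_j\,x_i^{-1}=Sq^1(A)\in\mathcal A^+_2P_5$, so the lemma will follow once each error monomial $x\,x_j\,x_i^{-1}$ is shown to satisfy $x\,x_j\,x_i^{-1}<x$. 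No further machinery, not even Proposition~\ref{mdcb4}, is needed.

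The only real work is this order check, and it reduces to tracking the weight vector. Lowering $\nu_i$ from $2$ to $1$ together with raising an odd exponent $\nu_j$ by one leaves $\omega$ unchanged when $\nu_j=1$ and strictly lowers it when $\nu_j=3$ (the value $3\mapsto4$ shifting weight into $\omega_3$). Thus $\omega(x\,x_j\,x_i^{-1})\le(3,3)$ in every case: when it drops, the error term lies in $P_5^-((3,3))$ and is automatically $<x$; when it is preserved (so $\nu_j=1$ and, as noted below, $j>i$), the first position at which $\sigma(x\,x_j\,x_i^{-1})$ and $\sigma(x)$ differ is position $i$, where the exponent has dropped, so $\sigma(x\,x_j\,x_i^{-1})<\sigma(x)$. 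The single point requiring attention --- and the reason the lemma is genuinely a \emph{simple computation} --- is that this last comparison would go the wrong way if some $j<i$ had $\nu_j(x)=1$, since then the leading discrepancy would be an increase at position $j$. This never occurs: each of the fifteen monomials begins with $x_1^2$ or $x_1^3$, so every index preceding the first exponent $2$ carries the odd value $3$, not $1$, and for such $j<i$ the weight strictly drops. With this obstruction removed, the fifteen identities display all the listed monomials as strictly inadmissible.
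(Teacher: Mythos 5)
Your proof is correct, but there is nothing in the paper to compare it against line by line: the author disposes of Lemmas \ref{ina3341} and \ref{ina3342} with the single sentence ``By a simple computation one gets the two following lemmas,'' and the sibling results that are proved (Lemmas \ref{ina3343}, \ref{bd33212}, \ref{bd33213}) show what that computation is meant to be, namely a separate ad hoc expansion of each monomial into smaller monomials plus explicit images of $Sq^1, Sq^2,\dots$ taken modulo $P_5^-(\omega)$. Your argument is of the same general type---an explicit straightening identity certifying strict inadmissibility---but it is organized uniformly instead of case by case: the single exact identity $x = Sq^1(x/x_i) + \sum_j x\,x_jx_i^{-1}$, with $i$ the least index of exponent $2$ and $j$ ranging over the indices of odd exponent other than $i$, treats all fifteen monomials at once, uses only $Sq^1\in\mathcal A^+_2$ (legitimate, since $\omega(x)=(3,3)$ gives $s=2$), and reduces the lemma to the order comparison of Definition \ref{defn3}. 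That comparison is carried out correctly: replacing an exponent $3$ by $4$ forces the weight vector down to $(3,1,1)<(3,3)$, so such error terms are automatically smaller; replacing an exponent $1$ by $2$ while lowering $\nu_i$ from $2$ to $1$ preserves $\omega$, and then $\sigma$ decreases exactly because the first changed position is $i<j$; and the structural fact this requires---that in each listed monomial no exponent $1$ occurs before the first exponent $2$ (each begins with $x_1^2$, or with $x_1^3x_2^2$)---does hold, and in fact singles out precisely these fifteen among the thirty permutations of the exponent multiset $\{1,1,2,2,3\}$. What your write-up buys over the paper's omission is exactly this: a checkable one-identity proof, the observation that $Sq^2$ and $Sq^3$ are never needed, and a conceptual explanation of why this particular half of the permutations, and not the other, appears in the list.
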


\begin{lem}\label{ina3343} All permutations of the following monomials are strictly inadmissible:

\medskip
\centerline{\begin{tabular}{lll}
$x_1^{3}x_2^{4}x_3^{4}x_4^{7}x_5^{7}$ &  $x_1^{3}x_2^{4}x_3^{5}x_4^{6}x_5^{7}$ &  $x_1^{3}x_2^{5}x_3^{5}x_4^{6}x_5^{6}$ 
\end{tabular}}
\end{lem}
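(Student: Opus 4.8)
The plan is to verify strict inadmissibility directly from the definition. Recall that a monomial $x$ is strictly inadmissible when, setting $s = \max\{i : \omega_i(x) \neq 0\}$, one can write $x = \sum_t y_t + \sum_{1 \leqslant u < 2^s} Sq^u(h_u)$ for suitable polynomials $h_u \in P_5$ and monomials $y_t < x$. Each of the three listed monomials has weight vector $\omega(x) = (3,3,4)$, so $s = 3$ and only the squares $Sq^1,\ldots,Sq^7$ are allowed in the hit part. I would first record this and observe that the obvious shortcut — factoring $x = w\,y^{2^{s'}}$ and invoking the spreading principle of Theorem \ref{dlcb1}(ii) — is \emph{not} available here: the natural low-degree factors are not strictly inadmissible. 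For instance the first monomial factors as $x_1^3x_4^3x_5^3\cdot(x_2x_3x_4x_5)^4$, whose base $x_1^3x_4^3x_5^3$ is admissible, and the third has base $x_1^3x_2x_3x_4^2x_5^2$, which is again admissible even though some of its permutations occur in Lemmas \ref{ina3341}--\ref{ina3342}. This failure of the spreading reduction is exactly why these three families must be treated by hand.

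For each representative I would construct the straightening relation explicitly. The engine is the Cartan formula together with the elementary identities already established, chiefly Lemma \ref{bdad} applied to the pairs of variables carrying the high exponents ($x_4^7x_5^7$, $x_4^6x_5^7$, $x_4^6x_5^6$ respectively); iterating such moves produces an equation $x = \sum_t y_t + \sum_{u<8} Sq^u(h_u)$ whose leading monomial is $x$ and in which every remaining monomial $y_t$ satisfies $\omega(y_t) < (3,3,4)$. I would select the $h_u$ among monomials of degree $25-u$ so that, after cancellation, only terms of strictly smaller weight survive; the restriction $u < 2^s = 8$ is precisely what keeps the relation \emph{strict} and compatible with Theorem \ref{dlcb1}(ii), which is why the strict form (rather than mere inadmissibility) is proved here.

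The remaining issue is the passage from the three representatives to all of their permutations. Here I would arrange the computation so that each error monomial $y_t$ lies in $P_5^{-}(3,3,4)$. Since the weight vector is invariant under permutation of the $x_i$ and since $Sq^u$ commutes with the $\Sigma_5$-action on $P_5$, applying any $\tau \in \Sigma_5$ to the relation for a representative $x$ gives $\tau(x) = \sum_t \tau(y_t) + \sum_{u<8} Sq^u(\tau(h_u))$ with $\omega(\tau(y_t)) = \omega(y_t) < (3,3,4) = \omega(\tau(x))$, so that $\tau(y_t) < \tau(x)$ and $\tau(x)$ is again strictly inadmissible. In this way the $30$, $120$ and $30$ permutations of the three monomials are all covered uniformly, and the claim reduces to the three explicit verifications. (If a same-weight error term were unavoidable in some relation, permutation-invariance would no longer be automatic and the corresponding permutation class would require a separate, but still finite, check.)

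The main obstacle is the explicit construction of the second paragraph: for each of the three monomials one must exhibit a hit equation involving only $Sq^1,\ldots,Sq^7$ whose error terms all drop in weight, and confirming that no same-weight competitor survives the Cartan cancellations is the delicate, computational heart of the argument. I expect the genuine difficulty to be this bookkeeping rather than anything conceptual, and it is the breakdown of the spreading shortcut — the admissibility of the natural base factors — that makes the direct calculation necessary.
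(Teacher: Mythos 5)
Your strategy coincides with the paper's: prove strict inadmissibility for each of the three listed representatives by an explicit relation involving only $Sq^1,\ldots,Sq^7$ modulo $P_5^-(3,3,4)$, then transport it to every permutation. Your equivariance argument for that last step is correct and complete: weight vectors are permutation-invariant and permutations of the variables commute with the Steenrod squares, so applying $\tau\in\Sigma_5$ to a relation $x=\sum_t y_t+\sum_{u<8}Sq^u(h_u)$ with $\omega(y_t)<(3,3,4)$ gives a relation of the same form for $\tau(x)$; this is exactly the justification behind the paper's sentence ``This equality shows that all permutations of $x$ are strictly inadmissible.'' Your preliminary remarks are also sound: $\omega(x)=(3,3,4)$ forces $s=3$, and Theorem \ref{dlcb1}(ii) is unavailable because in the factorizations $x=wy^{2^{s'}}$ the bases are spikes, hence admissible (e.g. $x_1^3x_2^4x_3^4x_4^7x_5^7=x_1^3x_4^3x_5^3\,(x_2x_3x_4x_5)^4$ with base $x_1^3x_4^3x_5^3$).

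The gap is that you never produce the three relations, and their existence is the entire content of the lemma. Saying you ``would select the $h_u$ \ldots so that, after cancellation, only terms of strictly smaller weight survive'' presupposes precisely what has to be shown: there is no a priori guarantee such a choice exists (for an admissible monomial no such relation exists at all), and exhibiting one is not peripheral bookkeeping but the proof itself — as you yourself concede in the final paragraph. The paper's proof consists exactly of such certificates; for instance, for $x=x_1^3x_2^4x_3^4x_4^7x_5^7$ it gives
\[
x=Sq^1\big(x_1^{5}x_2x_3^{4}x_4^{7}x_5^{7}\big)+Sq^2\big(x_1^{3}x_2^{2}x_3^{4}x_4^{7}x_5^{7}+x_1^{6}x_2x_3^{2}x_4^{7}x_5^{7}\big)\ \text{mod}\big(P_5^-(3,3,4)\big),
\]
with analogous computations for the other two monomials. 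A further defect in your sketch of how the computation would go: Lemma \ref{bdad} concerns monomials $X_i^aX_j^b$ built from the complementary products $X_i=x_1\cdots\hat x_i\cdots x_n$ of degree $n-1$; it says nothing about two-variable blocks such as $x_4^7x_5^7$, so it cannot be ``applied to the pairs of variables carrying the high exponents'' as you propose. In short: correct framework and a correctly justified reduction to three representatives, but the computational heart of the lemma is missing.
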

\begin{proof} We prove the lemma for the monomial $x = x_1^{3}x_2^{4}x_3^{4}x_4^{7}x_5^{7}$. The others can be proved by the similar computations. By a direct computation, we have
\begin{align*}
x &= Sq^1(x_1^{5}x_2x_3^{4}x_4^{7}x_5^{7}) + Sq^2(x_1^{3}x_2^{2}x_3^{4}x_4^{7}x_5^{7} + x_1^{6}x_2x_3^{2}x_4^{7}x_5^{7}) \ \text{ mod}\big(P_5^-(3,3,4)\big).
\end{align*}
This equality shows that all permutations of $x$ are strictly inadmissible.
\end{proof}
\begin{proof}[Proof of Proposition \ref{md334}] Let $x$ be an admissible monomial in $P_5$ such that $\omega(x) = (3,3,4)$. Then $x = x_jx_\ell x_ty^2$ with $y \in B_5(3,4)$ and $1 \leqslant j <\ell < t\leqslant 5$. 
Let $z \in B_5(3,4)$ such that $x_jx_\ell x_tz^2 \in P_5^+$.
By a direct computation we see that if $x_jx_\ell x_tz^2$ is not a permutation of one of monomials as given in Lemma \ref{ina3343}, then there is a monomial $w$ which is given in one of Lemmas \ref{ina3341}, \ref{ina3342} such that $x_jx_\ell x_tz^2= wz_1^{2^{u}}$ with suitable monomial $z_1 \in P_5$, and $u = \max\{j \in \mathbb Z : \omega_j(w) >0\}$. By Theorem \ref{dlcb1}, $x_jx_\ell x_tz^2$ is inadmissible. Since $x = x_jx_\ell x_ty^2$ and $x$ is admissible, $x$ is a permutation of one of monomials as given in Lemma \ref{ina3343}. Now the proposition follows from Lemma \ref{ina3343}. 
\end{proof}

By computing from a result in \cite{su5}, we have 
$$B_5^0(25) = B_5^0(3,3,2,1) = \{a_{25,t}:  1\leqslant t \leqslant 520\},$$ 
where the monomials $a_{25,t}$, $1 \leqslant t \leqslant 520$, are listed in Subsection \ref{s62}.

\begin{prop}\label{mdd61} There exist exactly $440$ admissible monomials in $P_5^+$ such that  their weight vectors are $(3,3,2,1)$. Consequently $\dim QP_5^+(3,3,2,1)= 440.$ 
\end{prop}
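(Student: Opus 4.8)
The plan is to determine the set $B_5^+(3,3,2,1)$ of admissible monomials in $P_5^+$ of weight vector $(3,3,2,1)$ by the same mechanism already deployed for Proposition \ref{md334}: isolate a finite list of \emph{strictly inadmissible} building blocks, propagate them to higher degree via Theorem \ref{dlcb1}, and thereby reduce the infinite verification to a finite inspection. Concretely, since $\deg(3,3,2,1) = 3 + 6 + 8 + 8 = 25$ and any $x \in P_5^+$ with $\omega(x) = (3,3,2,1)$ has $\omega_1(x) = 3$, I would first write $x = x_jx_\ell x_t y^2$ with $1 \leqslant j < \ell < t \leqslant 5$ and $y$ a monomial of degree $11$ with $\omega(y) = (3,2,1)$, exactly as in Lemma \ref{bdday}. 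Because $x \in P_5^+$, every variable must appear, which constrains the admissible $y$ and the triple $(j,\ell,t)$.

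The key steps, in order, are as follows. First I would establish, by direct computation in the style of Lemmas \ref{ina3341}--\ref{ina3343}, a list of strictly inadmissible monomials (in $P_4$ and $P_5$, in the appropriate lower degrees) that serve as obstruction blocks for the weight vector $(3,3,2,1)$. Second, given an arbitrary candidate $x = x_jx_\ell x_t z^2$ with $z \in B_5(3,2,1)$ and $x \in P_5^+$, I would show that unless $x$ is a permutation of one of a short explicit list of monomials, it factors as $x = w z_1^{2^u}$ for one of the obstruction blocks $w$, with $u = \max\{\, r : \omega_r(w) > 0\,\}$; Theorem \ref{dlcb1}(i) then forces $x$ to be inadmissible. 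Third, for the finitely many surviving candidates I would check admissibility directly, and then verify these $440$ monomials are linearly independent in $QP_5(3,3,2,1)$ by confirming none is hit modulo $P_5^-(3,3,2,1)$, which together with Proposition \ref{2.7} and the definition of $B_5^+$ yields $\dim QP_5^+(3,3,2,1) = |B_5^+(3,3,2,1)| = 440$.

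The main obstacle is the sheer combinatorial bulk of the second step: the number of pairs $(j,\ell,t)$ together with $z \in B_5(3,2,1)$ is large, and for each one must exhibit the correct factorization $x = w z_1^{2^u}$ or else certify $x$ as one of the finitely many admissible survivors. Producing a \emph{complete} and \emph{non-redundant} obstruction list — enough to kill every inadmissible candidate but consistent with leaving exactly $440$ admissibles — is delicate, since an incomplete list would leave spurious generators while an over-aggressive one might eliminate genuine ones. I expect this to require a careful case split organized by the support pattern of $x$ and the position of the exponent-one variable, leaning on Theorem \ref{dlcb1} to handle the high-degree part $2^s$ uniformly so that only the base degree $25$ needs explicit attention.

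Finally, the lower-bound half (linear independence of the $440$ classes) is where the count is actually pinned down: I would invoke Singer's criterion (Theorem \ref{dlsig}) to replace $\equiv$ by $\equiv_\omega$ for $\omega = (3,3,2,1)$, so that proving these monomials span independent classes in $QP_5(3,3,2,1)$ reduces to a rank computation over $\mathbb{F}_2$ on the explicit spanning set. Combined with the upper bound from the inadmissibility argument, this gives the exact figure $440$ and hence the stated dimension.
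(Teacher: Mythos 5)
Your upper-bound half is essentially the paper's argument: the same decomposition $x = x_jx_\ell x_t y^2$ with $y \in B_5(3,2,1)$, the same strategy of compiling a finite list of strictly inadmissible blocks (the paper's Lemmas \ref{ina3341}, \ref{bd33211}, \ref{bd33212}, \ref{bd33213}), the same factorization $x = wz_1^{2^r}$ with $r = \max\{j : \omega_j(w)>0\}$, and the same appeal to Theorem \ref{dlcb1} to conclude that every candidate outside an explicit list of $440$ monomials is inadmissible. Your use of Singer's criterion (Theorem \ref{dlsig}) to identify $[f]$ with $[f]_\omega$ for $\omega = (3,3,2,1)$ is also legitimate, since $(3,3,2,1)$ is the weight vector of the minimal spike $x_1^{15}x_2^7x_3^3$ of degree $25$.

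The genuine gap is in the lower-bound half. You propose to verify linear independence of the $440$ classes ``by confirming none is hit modulo $P_5^-(3,3,2,1)$.'' That criterion is logically insufficient: independence in $QP_5(3,3,2,1)$ requires that no \emph{nontrivial linear combination} $\sum_u \gamma_u b_{25,u}$ lies in $\mathcal A^+P_5 + P_5^-(\omega)$, which is far stronger than each individual monomial being non-hit. Your fallback --- ``a rank computation over $\mathbb F_2$ on the explicit spanning set'' --- is in principle a finite check, but it would require computing the image of $\mathcal A^+P_5$ inside $(P_5)_{25}$, a space of dimension $\binom{29}{4} = 23751$, and you give no mechanism for organizing or carrying out that computation. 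The paper avoids this entirely with a tool your proposal never invokes: the $\mathcal A$-module homomorphisms $p_{(i;I)} \colon P_5 \to P_4$ of Section \ref{s3}. Because each $p_{(i;I)}$ is $\mathcal A$-linear, a relation $\mathcal S = \sum_{u=1}^{440}\gamma_u b_{25,u} \equiv 0$ forces $p_{(i;I)}(\mathcal S) \equiv 0$ in $P_4$; expressing these projections (for $\ell(I)\leqslant 2$) in terms of the \emph{known} admissible monomials of $P_4$ from \cite{su2,su5} produces a system of linear constraints over $\mathbb F_2$ that kills all the coefficients $\gamma_u$. This reduction to the already-solved four-variable case is the substantive content of the independence step; without it, or an equally concrete substitute, your argument establishes only the inclusion $B_5^+(3,3,2,1) \subset \{b_{25,u}\}$ and does not pin down the number $440$.
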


We prove the proposition by showing that $B_5^+(3,3,2,1) = \{b_{25,u}:  1\leqslant u \leqslant 440\},$
where the monomials $b_{25,u}$, $1\leqslant u \leqslant 440$, are listed in Subsection \ref{s63}.  

We need some lemmas for the proof of this proposition.
The following is a corollary of a result in \cite{su5}.
\begin{lem}\label{bd33211} If $x$ is one of the following monomials then $f_i(x)$, $ 1 \leqslant i \leqslant 5$, are strictly inadmissible:

\medskip
\centerline{\begin{tabular}{llll}
$x_1^{3}x_2^{4}x_3^{3}x_4^{7}$ &  $x_1^{7}x_2^{9}x_3^{2}x_4^{7}$ &  $x_1^{7}x_2^{9}x_3^{7}x_4^{2}$ &  $x_1^{7}x_2^{9}x_3^{6}x_4^{3}$\cr  
$x_1^{3}x_2^{7}x_3^{8}x_4^{7}$ &  $x_1^{7}x_2^{3}x_3^{8}x_4^{7}$ &  $x_1^{7}x_2^{8}x_3^{3}x_4^{7}$ &  $x_1^{7}x_2^{8}x_3^{7}x_4^{3}$\cr
\end{tabular}}
\end{lem}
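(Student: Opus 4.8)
The plan is to separate a general structural fact about the maps $f_i$ from the monomial-by-monomial computation. First I would establish that $f_i\colon P_4\to P_5$ carries strictly inadmissible monomials to strictly inadmissible monomials, and then I would check that each of the eight listed monomials is already strictly inadmissible in $P_4$. The lemma then follows immediately, which is the sense in which it is a corollary of \cite{su5}.

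For the structural step, recall that $f_i$ sends variables to variables, so it is an injective homomorphism of $\mathcal A$-modules preserving degrees, and by the remark following Definition \ref{dfn1} (applied with $I=\emptyset$, so $\phi_{(i;\emptyset)}=f_i$) it preserves weight vectors, $\omega(f_i(x))=\omega(x)$. Hence $f_i$ preserves $r=\max\{k:\omega_k(x)\neq 0\}$, sends $P_4^-(\omega)$ into $P_5^-(\omega)$, and preserves the order $<$ of Definition \ref{defn3}: if $\omega(y)<\omega(x)$ this is clear, while if $\omega(y)=\omega(x)$ and $\sigma(y)<\sigma(x)$ then $f_i$ merely inserts a zero exponent at the same slot $i$ of both exponent vectors, leaving the left-lexicographic comparison unchanged. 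Consequently, if $x$ is strictly inadmissible in $P_4$, written $x=\sum_t y_t+\sum_{1\le s<2^r}Sq^s(h_s)$ with $y_t<x$ and $r=\max\{k:\omega_k(x)\neq 0\}$, then applying the $\mathcal A$-homomorphism $f_i$ yields $f_i(x)=\sum_t f_i(y_t)+\sum_{1\le s<2^r}Sq^s(f_i(h_s))$ with $f_i(y_t)<f_i(x)$ and the same $r$, which is precisely a witness that $f_i(x)$ is strictly inadmissible.

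It then remains to verify that the eight base monomials are strictly inadmissible in $P_4$. Each has weight vector $(3,3,2)$ or $(3,3,2,1)$, so $r$ equals $3$ or $4$, and for each one must exhibit an explicit congruence modulo $P_4^-(\omega(x))$ expressing it as a sum of strictly smaller monomials realized by Steenrod squares $Sq^s$ with $1\le s<2^r$, exactly in the style of the computation displayed in the proof of Lemma \ref{ina3343}. Since \cite{su5} already determines the admissible monomials of $P_4$ in these degrees together with the strict-inadmissibility relations that produce them, these eight relations can be quoted directly from there.

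I expect the only genuine obstacle to be bookkeeping rather than ideas: producing the eight Cartan-formula expansions explicitly, and, in the structural step, being careful that inserting a zero exponent does not disturb the order relation when the weight vectors of $y$ and $x$ coincide. The symmetry among several of the monomials — some differ only by permuting the exponents on $x_3$ and $x_4$ — should let a single template cover most cases, so no new conceptual difficulty arises beyond the verification already illustrated in Lemma \ref{ina3343}.
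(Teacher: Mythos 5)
Your proof is correct and takes essentially the paper's route: the paper gives no argument beyond the citation of \cite{su5}, and what that citation compresses is exactly your two-step reduction --- the transfer of strict inadmissibility along the $\mathcal A$-algebra maps $f_i$ (which commute with the $Sq^k$, preserve weight vectors and hence both the exponent bound $2^r$ and Kameko's order $<$), together with the strict-inadmissibility relations for the eight monomials in $P_4$ already established in \cite{su5}. The two points needing care, order preservation under insertion of a zero exponent and the $\mathcal A$-linearity of $f_i$, are both handled correctly in your argument.
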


\begin{lem}\label{bd33212} The following monomials are strictly inadmissible:

\medskip
\centerline{\begin{tabular}{lllll}
$x_1x_2^{2}x_3^{6}x_4x_5^{7}$ &  $x_1x_2^{2}x_3^{6}x_4^{7}x_5$ &  $x_1x_2^{2}x_3^{7}x_4^{6}x_5$ &  $x_1x_2^{6}x_3^{2}x_4x_5^{7}$ &  $x_1x_2^{6}x_3^{2}x_4^{7}x_5$\cr  
$x_1x_2^{6}x_3^{7}x_4^{2}x_5$ &  $x_1x_2^{7}x_3^{2}x_4^{6}x_5$ &  $x_1x_2^{7}x_3^{6}x_4^{2}x_5$ &  $x_1^{7}x_2x_3^{2}x_4^{6}x_5$ &  $x_1^{7}x_2x_3^{6}x_4^{2}x_5$\cr  
$x_1x_2^{6}x_3^{3}x_4^{6}x_5$ &  $x_1x_2^{6}x_3^{6}x_4x_5^{3}$ &  $x_1x_2^{6}x_3^{6}x_4^{3}x_5$ &  $x_1x_2^{2}x_3^{2}x_4^{5}x_5^{7}$ &  $x_1x_2^{2}x_3^{2}x_4^{7}x_5^{5}$\cr  
$x_1x_2^{2}x_3^{7}x_4^{2}x_5^{5}$ &  $x_1x_2^{7}x_3^{2}x_4^{2}x_5^{5}$ &  $x_1^{7}x_2x_3^{2}x_4^{2}x_5^{5}$ &  $x_1x_2^{2}x_3^{6}x_4^{3}x_5^{5}$ &  $x_1x_2^{2}x_3^{6}x_4^{5}x_5^{3}$\cr 
$x_1x_2^{6}x_3^{2}x_4^{3}x_5^{5}$ &  $x_1x_2^{6}x_3^{2}x_4^{5}x_5^{3}$ &  $x_1x_2^{6}x_3^{3}x_4^{2}x_5^{5}$ &  $x_1x_2^{6}x_3^{3}x_4^{4}x_5^{3}$ &  $x_1^{3}x_2^{4}x_3x_4^{6}x_5^{3}$\cr  
$x_1^{3}x_2^{4}x_3^{3}x_4^{3}x_5^{4}$ &  $x_1^{3}x_2^{4}x_3^{3}x_4^{4}x_5^{3}$ &
\end{tabular}}
\end{lem}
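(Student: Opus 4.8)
The plan is to establish strict inadmissibility one monomial at a time, following verbatim the pattern of the proof of Lemma~\ref{ina3343}: for each monomial $x$ in the list I will exhibit an explicit Cartan-formula identity expressing $x$, modulo $\mathcal A^+_3 P_5+P_5^-(3,3,2)$, as a sum of monomials strictly smaller than $x$ in the order of Definition~\ref{defn3}.

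First I would record the feature common to the whole list: every monomial $x$ displayed has degree $17$ and weight vector $\omega(x)=(3,3,2)$, so the largest index $i$ with $\omega_i(x)\neq 0$ is $s=3$. By the definition of strict inadmissibility, together with the relations $\equiv_\omega$ and $\simeq_{(s,\omega)}$ of Definition~\ref{dfn2}, I must produce monomials $y_1,\ldots,y_m$ with $y_t<x$ and
$$x \;\simeq_{(3,\omega)}\; \sum_{t=1}^m y_t, \qquad \omega=(3,3,2);$$
only the squares $Sq^1,Sq^2,Sq^4$ (the indecomposables below $2^s=8$) are needed, and any term whose weight vector is strictly below $(3,3,2)$ is by definition absorbed into $P_5^-(3,3,2)$ and may be discarded.

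For each $x$ the mechanism is to choose one or two preimages obtained from $x$ by lowering suitable exponents and to apply $Sq^1,Sq^2,Sq^4$ through the Cartan formula, producing an identity of the shape
$$x \;=\; Sq^1(h_1)+Sq^2(h_2)+Sq^4(h_4)+\sum_t y_t \pmod{P_5^-(3,3,2)},$$
with $h_1,h_2,h_4\in P_5$ short sums of monomials; as in Lemma~\ref{ina3343} the explicit sum $\sum_t y_t$ is frequently empty. Upon expanding the right-hand side, the monomial $x$ occurs exactly once, while every remaining term is either swept into $P_5^-(3,3,2)$ (smaller weight vector) or must be verified to satisfy $\sigma(y_t)<\sigma(x)$ (same weight vector). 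The list contains about two dozen monomials and, in contrast to Lemma~\ref{ina3343}, no permutation symmetry is asserted, so each monomial must be handled individually; however many of them share a common divisor (for example a block $x_2^6x_3^2$ or a trailing factor $x_5^5=x_5x_5^4$), which lets a single relation template be reused after relabelling of variables.

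The computation is routine but bulky, and I expect the genuine obstacle to lie not in producing the Steenrod identities but in the order comparison: after the Cartan expansion one must confirm, for every residual monomial $y_t$ of weight vector $(3,3,2)$, the strict inequality $\sigma(y_t)<\sigma(x)$ of exponent vectors, since $x$ already sits at the top weight level $(3,3,2)$. This bookkeeping, repeated across all the listed monomials, is where the work concentrates; once it is in place the conclusion is immediate. I would therefore write out one representative case in full---exactly as is done for $x_1^3x_2^4x_3^4x_4^7x_5^7$ in Lemma~\ref{ina3343}---and note that the remaining monomials succumb to the identical procedure.
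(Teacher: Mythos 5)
Your proposal is correct and takes essentially the same approach as the paper: the paper likewise fixes a single representative monomial ($x_1x_2^{2}x_3^{6}x_4^{7}x_5$) and writes it, modulo $P_5^-(3,3,2)$, as two strictly smaller monomials plus $Sq^1$- and $Sq^2$-images, declaring the remaining cases "similar computations." Your framing of the task — degree $17$, common weight vector $(3,3,2)$, hence $s=3$ and only $Sq^1, Sq^2, Sq^4$ needed, with lower-weight terms absorbed into $P_5^-(3,3,2)$ and same-weight residual terms checked against the exponent-vector order — matches the paper's argument exactly.
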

 \begin{proof} We prove this lemma for the monomial $x = x_1x_2^{2}x_3^{6}x_4^{7}x_5$. The others can be proved by the similar computations. We have
\begin{align*}
x &=  x_1^{}x_2^2x_3^{5}x_4^{7}x_5^{2} + x_1^{}x_2x_3^{6}x_4^{7}x_5^{2} + Sq^1(x_1^{2}x_2x_3^{5}x_4^{7}x_5^{}+x_1^{2}x_2x_3^{3}x_4^{9}x_5^{})\\
&\quad  + Sq^2(x_1^{}x_2^{}x_3^{5}x_4^{7}x_5^{} + x_1^{}x_2x_3^{3}x_4^{9}x_5^{}) \ \text{ mod}\big(P_5^-(3,3,2,1)\big).
\end{align*}
This equality shows that $x$ is strictly inadmissible.
\end{proof}

\begin{lem}\label{bd33213} The following monomials are strictly inadmissible:

\medskip
\centerline{\begin{tabular}{lllll}
$x_1x_2^{2}x_3^{3}x_4^{6}x_5^{13}$ &  $x_1x_2^{2}x_3^{3}x_4^{14}x_5^{5}$ &  $x_1x_2^{3}x_3^{2}x_4^{6}x_5^{13}$ &  $x_1x_2^{3}x_3^{2}x_4^{14}x_5^{5}$ &  $x_1x_2^{3}x_3^{6}x_4^{2}x_5^{13}$\cr  $x_1x_2^{3}x_3^{6}x_4^{6}x_5^{9}$ &  $x_1x_2^{3}x_3^{6}x_4^{10}x_5^{5}$ &  $x_1x_2^{3}x_3^{6}x_4^{14}x_5$ &  $x_1x_2^{3}x_3^{14}x_4^{2}x_5^{5}$ &  $x_1x_2^{3}x_3^{14}x_4^{6}x_5$\cr  $x_1x_2^{6}x_3^{3}x_4^{8}x_5^{7}$ &  $x_1x_2^{6}x_3^{8}x_4^{3}x_5^{7}$ &  $x_1x_2^{6}x_3^{8}x_4^{7}x_5^{3}$ &  $x_1x_2^{6}x_3^{9}x_4^{2}x_5^{7}$ &  $x_1x_2^{6}x_3^{9}x_4^{6}x_5^{3}$\cr  $x_1x_2^{6}x_3^{9}x_4^{7}x_5^{2}$ &  $x_1x_2^{7}x_3^{8}x_4^{2}x_5^{7}$ &  $x_1x_2^{7}x_3^{8}x_4^{6}x_5^{3}$ &  $x_1x_2^{7}x_3^{8}x_4^{7}x_5^{2}$ &  $x_1x_2^{7}x_3^{10}x_4^{4}x_5^{3}$\cr  $x_1^{3}x_2x_3^{2}x_4^{6}x_5^{13}$ &  $x_1^{3}x_2x_3^{2}x_4^{14}x_5^{5}$ &  $x_1^{3}x_2x_3^{6}x_4^{2}x_5^{13}$ &  $x_1^{3}x_2x_3^{6}x_4^{6}x_5^{9}$ &  $x_1^{3}x_2x_3^{6}x_4^{10}x_5^{5}$\cr  $x_1^{3}x_2x_3^{6}x_4^{14}x_5$ &  $x_1^{3}x_2x_3^{14}x_4^{2}x_5^{5}$ &  $x_1^{3}x_2x_3^{14}x_4^{6}x_5$ &  $x_1^{3}x_2^{3}x_3^{4}x_4^{4}x_5^{11}$ &  $x_1^{3}x_2^{3}x_3^{4}x_4^{12}x_5^{3}$\cr  $x_1^{3}x_2^{3}x_3^{12}x_4^{4}x_5^{3}$ &  $x_1^{3}x_2^{4}x_3x_4^{10}x_5^{7}$ &  $x_1^{3}x_2^{4}x_3^{9}x_4^{2}x_5^{7}$ &  $x_1^{3}x_2^{4}x_3^{9}x_4^{7}x_5^{2}$ &  $x_1^{3}x_2^{5}x_3^{2}x_4^{2}x_5^{13}$\cr  $x_1^{3}x_2^{5}x_3^{2}x_4^{6}x_5^{9}$ &  $x_1^{3}x_2^{5}x_3^{2}x_4^{10}x_5^{5}$ &  $x_1^{3}x_2^{5}x_3^{2}x_4^{14}x_5$ &  $x_1^{3}x_2^{5}x_3^{6}x_4^{2}x_5^{9}$ &  $x_1^{3}x_2^{5}x_3^{6}x_4^{10}x_5$\cr  $x_1^{3}x_2^{5}x_3^{8}x_4^{6}x_5^{3}$ &  $x_1^{3}x_2^{5}x_3^{10}x_4^{2}x_5^{5}$ &  $x_1^{3}x_2^{5}x_3^{10}x_4^{6}x_5$ &  $x_1^{3}x_2^{5}x_3^{14}x_4^{2}x_5$ &  $x_1^{3}x_2^{12}x_3x_4^{2}x_5^{7}$\cr  $x_1^{3}x_2^{12}x_3x_4^{7}x_5^{2}$ &  $x_1^{3}x_2^{12}x_3^{7}x_4x_5^{2}$ &  $x_1^{3}x_2^{13}x_3^{2}x_4^{2}x_5^{5}$ &  $x_1^{3}x_2^{13}x_3^{2}x_4^{6}x_5$ &  $x_1^{3}x_2^{13}x_3^{6}x_4^{2}x_5$\cr  $x_1^{7}x_2x_3^{8}x_4^{2}x_5^{7}$ &  $x_1^{7}x_2x_3^{8}x_4^{6}x_5^{3}$ &  $x_1^{7}x_2x_3^{8}x_4^{7}x_5^{2}$ &  $x_1^{7}x_2x_3^{10}x_4^{4}x_5^{3}$ &  $x_1^{7}x_2^{8}x_3x_4^{2}x_5^{7}$\cr  $x_1^{7}x_2^{8}x_3x_4^{6}x_5^{3}$ &  $x_1^{7}x_2^{8}x_3x_4^{7}x_5^{2}$ &  $x_1^{7}x_2^{8}x_3^{3}x_4^{3}x_5^{4}$ &  $x_1^{7}x_2^{8}x_3^{7}x_4x_5^{2}$ &  $x_1^{7}x_2^{9}x_3^{2}x_4^{4}x_5^{3}$\cr  $x_1^{7}x_2^{9}x_3^{6}x_4x_5^{2}$ &
\end{tabular}}
\end{lem}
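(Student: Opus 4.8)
The plan is to establish the lemma exactly as its two predecessors, Lemmas \ref{ina3343} and \ref{bd33212}, were handled: for each monomial $x$ in the list I would exhibit an explicit identity
\[
x = \sum_t y_t + \sum_{1 \leqslant j < 16} Sq^{j}(g_j) \pmod{P_5^-(3,3,2,1)},
\]
in which every monomial $y_t$ satisfies $y_t < x$ in the order of Definition \ref{defn3}, only finitely many $g_j \in P_5$ are nonzero, and (as in the displayed computations, where only $Sq^1$ and $Sq^2$ occur) the squares used stay among $Sq^j$ with $1 \leqslant j < 16$. Such an identity is precisely a certificate that $x$ is strictly inadmissible: every operation appearing lies in $\mathcal A^+_4$, which matters because the weight vector $(3,3,2,1)$ has length $4$, and it is exactly this restriction that makes the obstruction survive multiplication by a $2^4$-th power, so that the conclusion can later be fed into Theorem \ref{dlcb1}(ii).

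To build the identity for a single $x$ I would search for a short preimage $g$ --- typically a monomial, or a sum of two monomials, of the appropriate degree --- whose Cartan expansion $Sq^{2^i}(g)$ contains $x$ as one of its terms. Transposing then writes $x$ as $Sq^{2^i}(g)$ plus the remaining expansion terms, which I sort into three groups: monomials strictly smaller than $x$, which I keep as the $y_t$; monomials of weight vector strictly below $(3,3,2,1)$, which are absorbed into $P_5^-(3,3,2,1)$; and any leftover top-weight terms, which I cancel by a second operation $Sq^{2^{i'}}(g')$. As the two model proofs show, the pair $Sq^1, Sq^2$ applied to one or two well-chosen preimages already suffices in the generic case. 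Several of the listed monomials can be dispatched more cheaply still: whenever $x$ factors as $x = w\,y^{2^s}$ with $w$ a monomial of weight-vector length $s$ that is already known to be strictly inadmissible --- for instance a weight-$(3,3,2)$ monomial covered by Lemma \ref{bd33212} --- Theorem \ref{dlcb1}(ii) applies directly and no fresh Cartan computation is needed.

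The proof is therefore a finite but lengthy verification, one identity per monomial, and the genuine difficulty is organizational rather than conceptual. For each of the several dozen monomials I must (a) guess a correct family of preimages $g_j$; (b) expand the Cartan formula and confirm that, after all cancellations, every surviving companion monomial is genuinely $< x$; and (c) check that each discarded term has weight vector strictly less than $(3,3,2,1)$, so that it truly lies in $P_5^-(3,3,2,1)$ and the relation is an honest witness of strict --- not merely ordinary --- inadmissibility. In the write-up I would, following Lemmas \ref{ina3343} and \ref{bd33212}, display one such identity in full and assert that the remaining monomials are disposed of by entirely analogous computations.
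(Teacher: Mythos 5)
Your plan coincides with the paper's own proof of this lemma: for each listed monomial one exhibits an explicit identity $x = \sum_t y_t + \sum_{1\leqslant j < 16} Sq^{j}(g_j)$ modulo $P_5^-(3,3,2,1)$ with every $y_t < x$, displays a single such computation in full (the paper does this for $x_1^{3}x_2^{5}x_3^{8}x_4^{6}x_5^{3}$), and asserts that the remaining monomials follow from entirely similar computations, exactly as you propose. The only caveat is that your expectation that $Sq^1,Sq^2$ generically suffice is optimistic---the paper's sample identity already requires $Sq^4$ and $Sq^8$---but since your framework allows all $Sq^{j}$ with $1\leqslant j < 2^4$, this does not affect correctness.
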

 \begin{proof} We prove this lemma for the monomial $x = x_1^3x_2^{5}x_3^{8}x_4^{6}x_5^3$. The others can be proved by the similar computations. We have
\begin{align*}
x &= x_1^{2}x_2^{3}x_3^{5}x_4^{6}x_5^{9} +  x_1^{2}x_2^{3}x_3^{5}x_4^{9}x_5^{6} +  x_1^{2}x_2^{3}x_3^{6}x_4^{5}x_5^{9} +  x_1^{2}x_2^{3}x_3^{6}x_4^{9}x_5^{5} +  x_1^{2}x_2^{3}x_3^{9}x_4^{5}x_5^{6}\\  &\quad + x_1^{2}x_2^{3}x_3^{9}x_4^{6}x_5^{5} +  x_1^{2}x_2^{5}x_3^{3}x_4^{6}x_5^{9} +  x_1^{2}x_2^{5}x_3^{3}x_4^{9}x_5^{6} +  x_1^{2}x_2^{5}x_3^{6}x_4^{3}x_5^{9} +  x_1^{2}x_2^{5}x_3^{6}x_4^{9}x_5^{3}\\  &\quad + x_1^{2}x_2^{5}x_3^{9}x_4^{3}x_5^{6} +  x_1^{2}x_2^{5}x_3^{9}x_4^{6}x_5^{3} +  x_1^{3}x_2^{3}x_3^{4}x_4^{6}x_5^{9} +  x_1^{3}x_2^{3}x_3^{4}x_4^{9}x_5^{6} +  x_1^{3}x_2^{3}x_3^{5}x_4^{6}x_5^{8}\\  &\quad + x_1^{3}x_2^{3}x_3^{5}x_4^{8}x_5^{6} +  x_1^{3}x_2^{3}x_3^{6}x_4^{4}x_5^{9} +  x_1^{3}x_2^{3}x_3^{6}x_4^{5}x_5^{8} +  x_1^{3}x_2^{3}x_3^{6}x_4^{8}x_5^{5} +  x_1^{3}x_2^{3}x_3^{6}x_4^{9}x_5^{4}\\  &\quad + x_1^{3}x_2^{3}x_3^{8}x_4^{5}x_5^{6} +  x_1^{3}x_2^{3}x_3^{8}x_4^{6}x_5^{5} +  x_1^{3}x_2^{3}x_3^{9}x_4^{4}x_5^{6} +  x_1^{3}x_2^{3}x_3^{9}x_4^{6}x_5^{4} +  x_1^{3}x_2^{4}x_3^{3}x_4^{6}x_5^{9}\\  &\quad + x_1^{3}x_2^{4}x_3^{3}x_4^{9}x_5^{6} +  x_1^{3}x_2^{4}x_3^{6}x_4^{3}x_5^{9} +  x_1^{3}x_2^{4}x_3^{6}x_4^{9}x_5^{3} +  x_1^{3}x_2^{4}x_3^{9}x_4^{3}x_5^{6} +  x_1^{3}x_2^{4}x_3^{9}x_4^{6}x_5^{3}\\  &\quad + x_1^{3}x_2^{5}x_3^{3}x_4^{6}x_5^{8} +  x_1^{3}x_2^{5}x_3^{3}x_4^{8}x_5^{6} +  x_1^{3}x_2^{5}x_3^{6}x_4^{3}x_5^{8} +  x_1^{3}x_2^{5}x_3^{6}x_4^{8}x_5^{3} +  x_1^{3}x_2^{5}x_3^{8}x_4^{3}x_5^{6}\\  
&\quad + Sq^1\big(x_1^{3}x_2^{3}x_3^{3}x_4^{6}x_5^{9} +  x_1^{3}x_2^{3}x_3^{3}x_4^{9}x_5^{6} +  x_1^{3}x_2^{3}x_3^{6}x_4^{3}x_5^{9} +  x_1^{3}x_2^{3}x_3^{6}x_4^{9}x_5^{3} +  x_1^{3}x_2^{3}x_3^{9}x_4^{3}x_5^{6}\\  
&\quad + x_1^{3}x_2^{3}x_3^{9}x_4^{6}x_5^{3} +  x_1^{3}x_2^{6}x_3^{5}x_4^{5}x_5^{5}\big) + Sq^2\big(x_1^{2}x_2^{3}x_3^{3}x_4^{6}x_5^{9} +  x_1^{2}x_2^{3}x_3^{3}x_4^{9}x_5^{6} + x_1^{2}x_2^{3}x_3^{6}x_4^{3}x_5^{9}\\  
&\quad + x_1^{2}x_2^{3}x_3^{6}x_4^{9}x_5^{3} +  x_1^{2}x_2^{3}x_3^{9}x_4^{3}x_5^{6} +  x_1^{2}x_2^{3}x_3^{9}x_4^{6}x_5^{3} +  x_1^{3}x_2^{5}x_3^{5}x_4^{5}x_5^{5} +  x_1^{3}x_2^{5}x_3^{5}x_4^{5}x_5^{5}\\  
&\quad + x_1^{5}x_2^{3}x_3^{3}x_4^{6}x_5^{6} +  x_1^{5}x_2^{3}x_3^{5}x_4^{5}x_5^{5} +  x_1^{5}x_2^{3}x_3^{6}x_4^{3}x_5^{6} +  x_1^{5}x_2^{3}x_3^{6}x_4^{6}x_5^{3} +  x_1^{5}x_2^{5}x_3^{3}x_4^{5}x_5^{5}\\  
&\quad + x_1^{5}x_2^{5}x_3^{5}x_4^{3}x_5^{5}\big) + Sq^4\big(x_1^{3}x_2^{3}x_3^{3}x_4^{3}x_5^{9} +  x_1^{3}x_2^{3}x_3^{3}x_4^{6}x_5^{6} +  x_1^{3}x_2^{3}x_3^{6}x_4^{3}x_5^{6}\\  
&\quad + x_1^{3}x_2^{3}x_3^{6}x_4^{6}x_5^{3}\big) + Sq^8\big(x_1^{3}x_2^{3}x_3^{3}x_4^{3}x_5^{5}\big)\ \text{ mod}\big(P_5^-(3,3,2,1)\big).
\end{align*}
This equality implies that $x$ is strictly inadmissible.
\end{proof}

\begin{proof}[Proof of Proposition \ref{mdd61}] Let $x$ be an admissible monomial in $(P_5^+)_{25}$ such that  $\omega(x) = (3,3,2,1)$. Then, $x = x_jx_\ell x_ty^2$ with $1 \leqslant j < \ell < t \leqslant 5$ and $y \in B_5(3,2,1)$. 

Let $z \in B_5(3,2,1)$ such that $x_jx_\ell x_tz^2 \in P_5^+$.
By a direct computation using the results in T\'in \cite{tin0}, we see that if $x_jx_\ell x_tz^2 \ne b_{25,u}, \forall u, \ 1 \leqslant u \leqslant 440$, then there is a monomial $w$ which is given in one of Lemmas \ref{ina3341}, \ref{bd33211}, \ref{bd33212} and \ref{bd33213} such that $x_jx_\ell x_tz^2= wz_1^{2^{r}}$ with suitable monomial $z_1 \in P_5$, and $r = \max\{j \in \mathbb Z : \omega_j(w) >0\}$. By Theorem \ref{dlcb1}, $x_jx_\ell x_tz^2$ is inadmissible. Since $x$ is admissible and $x = x_jx_\ell x_ty^2$ with $y \in B_5(3,2,1)$, one gets $x= b_{25,u}$ for some $u,\ 1 \leqslant u \leqslant 440$. This implies $B_5^+(3,3,2,1) \subset \{b_{25,u} : \ 1 \leqslant u \leqslant 440\}$. 

We now prove the set $\{[b_{25,u}] : \ 1 \leqslant u \leqslant 440\}$ is linearly independent in the space $(\mathbb F_2 \otimes_{\mathcal A} P_5)_{25}$. Suppose there is a linear relation
$$\mathcal S = \sum_{u = 1}^{440}\gamma_ub_{25,u} \equiv 0,$$ 
where $\gamma_u \in \mathbb F_2$. For $1 \leqslant i < j \leqslant 5$, we explicitly compute $p_{(i;I)}(\mathcal S)$ in terms of the admissible monomials in $P_4$ (mod$(\mathcal A^+P_4)$). By a direct computation from the relations $p_{(i;I)}(\mathcal S) \equiv 0$ with $\ell(I) \leqslant 2$, we obtain $\gamma_u = 0$ for $1 \leqslant u \leqslant 440$.
The proposition follows. 
\end{proof}

We have $\dim (QP_5)_{10} = 280$ and $\dim (QP_5^0)_{25}  = 520$. So, one gets the following theorem.
\begin{thm}\label{dlad1} For any positive integer $s$, we have $\dim (QP_5)_{15.2^s - 5} = 1240$. 
\end{thm}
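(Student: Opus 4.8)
The plan is to obtain $\dim(QP_5)_{15.2^s-5}$ as a bookkeeping consequence of the structural results already in place, since all the hard combinatorics has been absorbed into Propositions \ref{md334} and \ref{mdd61} and the supporting inadmissibility lemmas.

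First I would reduce every $s$ to the base degree $25$. For $s \geqslant 2$ the condition $\mu(15.2^s-5)=5$ holds, so by Theorem \ref{dlmd2} each stage of the iterated Kameko homomorphism is an isomorphism of $GL_5$-modules and hence $(\widetilde{Sq}^0_*)^{s-1}: (QP_5)_{15.2^s-5}\to (QP_5)_{25}$ is an isomorphism; for $s=1$ the degree is already $25$ and the map is the identity. Thus $\dim(QP_5)_{15.2^s-5}=\dim(QP_5)_{25}$ for every $s\geqslant 1$, and it remains to show $\dim(QP_5)_{25}=1240$.

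Next I would use the Kameko splitting in degree $25$. Because $(\widetilde{Sq}^0_*)_{(5,10)}: (QP_5)_{25}\to (QP_5)_{10}$ is an epimorphism, we have $(QP_5)_{25}\cong \text{Ker}(\widetilde{Sq}^0_*)_{(5,10)}\oplus (QP_5)_{10}$ with $\dim (QP_5)_{10}=280$. To evaluate the kernel I would invoke Lemma \ref{bdday}, which forces every admissible class in it to have weight vector $(3,3,2,1)$ or $(3,3,4)$, giving $\text{Ker}(\widetilde{Sq}^0_*)_{(5,10)}\cong QP_5(3,3,2,1)\oplus QP_5(3,3,4)$. By Proposition \ref{md334} the summand $QP_5(3,3,4)$ vanishes, so the kernel is just $QP_5(3,3,2,1)$. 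Splitting the latter by Proposition \ref{2.7} into its $P_5^0$- and $P_5^+$-parts, the equality $B_5^0(25)=B_5^0(3,3,2,1)$ yields $\dim QP_5^0(3,3,2,1)=520$, while Proposition \ref{mdd61} gives $\dim QP_5^+(3,3,2,1)=440$. Adding, $\dim \text{Ker}(\widetilde{Sq}^0_*)_{(5,10)}=520+440=960$, and therefore $\dim (QP_5)_{25}=960+280=1240$.

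This final assembly is routine; the genuine obstacle lies entirely in the inputs, above all in Proposition \ref{mdd61}. There one must show both that $B_5^+(3,3,2,1)$ contains exactly the listed $440$ monomials --- ruling out every other candidate by exhibiting it as $w z_1^{2^r}$ for one of the strictly inadmissible $w$ of Lemmas \ref{ina3341}, \ref{bd33211}, \ref{bd33212}, \ref{bd33213} and applying Theorem \ref{dlcb1} --- and that the corresponding $440$ classes are linearly independent in $(QP_5)_{25}$, which is verified by applying the $\mathcal A$-module maps $p_{(i;I)}$ into $P_4$ and reading off coefficients among admissible monomials. With these inputs and Proposition \ref{md334} established, the dimension count of Theorem \ref{dlad1} is immediate.
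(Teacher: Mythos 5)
Your proposal is correct and follows exactly the paper's own route: reduction to degree $25$ via the iterated Kameko isomorphism, the splitting $(QP_5)_{25}\cong \text{Ker}(\widetilde{Sq}^0_*)_{(5,10)}\oplus (QP_5)_{10}$, Lemma \ref{bdday} together with Proposition \ref{md334} to identify the kernel with $QP_5(3,3,2,1)$, and the counts $520+440+280=1240$ from Proposition \ref{mdd61} and the listed bases. You have also correctly located where the real work lies (the inadmissibility lemmas and the linear independence check via the maps $p_{(i;I)}$), so nothing is missing.
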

\section{An application to the fifth Singer transfer}\label{s5}

In this section, we verify Singer's conjecture for the fifth algebraic tranfer in the internal degree $15.2^s-5$ by using the results in Section \ref{s4}. The main result of the section is the following.

\begin{thm}\label{dlc5} For any non-negative integer $s$, we have $(QP_5)_{15.2^s-5}^{GL_5} = 0$.
\end{thm}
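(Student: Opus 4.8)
The plan is to reduce the statement to two concrete invariant computations, in degrees $10$ and $25$, and then to carry those out using the explicit admissible bases recorded in Section~\ref{s4}. First I would remove the dependence on $s$. For $s=0$ the degree is $10$, so the assertion is exactly $(QP_5)_{10}^{GL_5}=0$. For $s\geqslant 1$, since $\mu(15.2^s-5)=5$, the iterated Kameko isomorphism of Theorem~\ref{dlmd2} gives $(QP_5)_{15.2^s-5}\cong (QP_5)_{25}$ as $GL_5$-modules, so it suffices to show $(QP_5)_{25}^{GL_5}=0$. Thus the whole theorem reduces to the two vanishing statements $(QP_5)_{10}^{GL_5}=0$ and $(QP_5)_{25}^{GL_5}=0$.

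Next I would treat degree $25$ by applying the (left-exact) invariants functor to Kameko's epimorphism. Since $(\widetilde{Sq}^0_*)_{(5,10)}$ is a map of $GL_5$-modules, there is a short exact sequence of $GL_5$-modules
\begin{equation*}
0 \longrightarrow \text{Ker}(\widetilde{Sq}^0_*)_{(5,10)} \longrightarrow (QP_5)_{25} \longrightarrow (QP_5)_{10} \longrightarrow 0,
\end{equation*}
and left-exactness of $(-)^{GL_5}$ yields the exact sequence
\begin{equation*}
0 \longrightarrow \big(\text{Ker}(\widetilde{Sq}^0_*)_{(5,10)}\big)^{GL_5} \longrightarrow (QP_5)_{25}^{GL_5} \longrightarrow (QP_5)_{10}^{GL_5}.
\end{equation*}
By Lemma~\ref{bdday}, the splitting $\text{Ker}(\widetilde{Sq}^0_*)_{(5,10)}\cong QP_5(3,3,2,1)\oplus QP_5(3,3,4)$ (a $GL_5$-isomorphism, each summand being a $GL_5$-module by Lemma~\ref{bdm}), and Proposition~\ref{md334} giving $QP_5(3,3,4)=0$, the kernel is $GL_5$-isomorphic to $QP_5(3,3,2,1)$. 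Hence, once $QP_5(3,3,2,1)^{GL_5}=0$ and $(QP_5)_{10}^{GL_5}=0$ are established, the middle term $(QP_5)_{25}^{GL_5}$ maps into a zero target with zero kernel and so vanishes, settling both remaining cases.

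It then remains to compute the two invariant spaces, and here I would use the standard two-step passage through $GL_5=\langle \Sigma_5,\rho_5\rangle$. For $QP_5(3,3,2,1)$ I would take the $960$ admissible monomials $B_5^0(3,3,2,1)\cup B_5^+(3,3,2,1)$ of Subsections~\ref{s62}--\ref{s63} as a basis and write a putative invariant $[f]_\omega=\sum_x \gamma_x[x]_\omega$ with $\omega=(3,3,2,1)$. Imposing $\rho_i(f)\equiv_\omega f$ for $1\leqslant i\leqslant 4$ forces $[f]_\omega$ to be $\Sigma_5$-invariant; to convert this into linear equations on the $\gamma_x$, one expands each $\rho_i(x)$, rewrites it modulo $\mathcal A^+P_5+P_5^-(3,3,2,1)$ as a combination of the admissible basis (using Theorem~\ref{dlcb1} together with the strictly inadmissible monomials of Lemmas~\ref{ina3341}--\ref{bd33213}), and reads off the matrix of each $\rho_i$. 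Solving for the common fixed subspace collapses the $960$ coordinates to the comparatively few $\Sigma_5$-invariant combinations, after which I impose the final relation $\rho_5(f)\equiv_\omega f$ and verify that no nonzero solution survives. The identical scheme, now with the basis $B_5(10)=\{a_{10,t}\}$ of Subsection~\ref{s61}, yields $(QP_5)_{10}^{GL_5}=0$.

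The main obstacle is the $\Sigma_5$-invariant computation inside the $960$-dimensional module $QP_5(3,3,2,1)$: the bookkeeping required to reduce every $\rho_i(x)$ to admissible form modulo $P_5^-(3,3,2,1)$ is lengthy, and only after this reduction does the fixed-point system become tractable. The concluding $\rho_5$ step is short, but it is where the vanishing is genuinely forced, so I would organize the work to keep the intermediate $\Sigma_5$-invariant basis as explicit as possible and to reuse the reductions already invoked in the proofs of Propositions~\ref{md334} and~\ref{mdd61}.
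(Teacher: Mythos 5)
Your proposal is correct and follows essentially the same route as the paper: the reduction to $s=0,1$ via the iterated Kameko isomorphism, the identification $\text{Ker}(\widetilde{Sq}^0_*)_{(5,10)}\cong QP_5(3,3,2,1)$ through Lemma \ref{bdday} and Proposition \ref{md334}, the left-exactness argument deducing $(QP_5)_{25}^{GL_5}=0$ from the vanishing of the kernel invariants and of $(QP_5)_{10}^{GL_5}$, and the two-step computation (impose $\rho_1,\ldots,\rho_4$ to cut down to $\Sigma_5$-invariants, then impose $\rho_5$) on the explicit admissible bases are all exactly what the paper does, differing only in that the paper organizes the linear algebra via explicit $\Sigma_5$-submodule decompositions. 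One small slip: for $s=1$ one has $\mu(25)=3$, not $5$; this is harmless since the iterated Kameko map is then the identity, but the hypothesis $\mu(15.2^s-5)=5$ is only needed (and only true) for $s>1$.
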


By using the results of Tangora \cite{ta}, Lin \cite{wl} and Chen \cite{che}, we easily obtain 
$\text{Tor}_{5,7.2^s}^{\mathcal A}(\mathbb F_2,\mathbb F_2) = 0.$
So, by Theorem \ref{dlc5}, the homomorphism
$$\varphi_5: \text{Tor}_{5,15.2^s}^{\mathcal A}(\mathbb F_2,\mathbb F_2) \longrightarrow (\mathbb F_2{\otimes}_{\mathcal A}P_5)_{15.2^s-5}^{GL_5}$$
is a trivial isomorphism. Hence, one gets the following.
\begin{corl} Singer's conjecture is true for the case $n = 5$ and the internal degree $15.2^s-5$ with $s$ an arbitrary non-negative integer.
\end{corl}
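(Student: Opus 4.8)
The plan is to strip away the dependence on $s$ by means of Kameko's isomorphism, reduce the computation to two base degrees, split those spaces along weight vectors into pieces that are already analysed in Section~\ref{s4}, and then compute the relevant invariants directly from the admissible bases by imposing the $\rho_i$-conditions.

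First I would remove the dependence on $s$. For $s=0$ the degree equals $15.2^0-5 = 10$, so the assertion is $(QP_5)_{10}^{GL_5}=0$. For $s\geqslant 1$, as observed in Section~\ref{s4}, the iterated Kameko homomorphism $(\widetilde{Sq}^0_*)^{s-1}\colon (QP_5)_{15.2^s-5}\to (QP_5)_{25}$ is an isomorphism of $GL_5$-modules, whence $(QP_5)_{15.2^s-5}^{GL_5}\cong (QP_5)_{25}^{GL_5}$. Thus it suffices to prove $(QP_5)_{10}^{GL_5}=0$ and $(QP_5)_{25}^{GL_5}=0$.

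Next I would decompose $(QP_5)_{25}^{GL_5}$. By Lemma~\ref{bdm} each $QP_5(\omega)$ is a $GL_5$-module and $(QP_5)_{25}\cong\bigoplus_{\deg\omega=25}QP_5(\omega)$ as $GL_5$-modules, so the invariants split across weight vectors. By Lemma~\ref{bdday} the only weight vectors carrying admissible monomials are those with $\omega_1=5$, together with $(3,3,2,1)$ and $(3,3,4)$. The $\omega_1=5$ summand is carried isomorphically onto $(QP_5)_{10}$ by the $GL_5$-equivariant map $(\widetilde{Sq}^0_*)_{(5,10)}$, and $QP_5(3,3,4)=0$ by Proposition~\ref{md334}. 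Hence $(QP_5)_{25}^{GL_5}\cong (QP_5)_{10}^{GL_5}\oplus QP_5(3,3,2,1)^{GL_5}$, and the whole theorem reduces to showing that both $(QP_5)_{10}^{GL_5}$ and $QP_5(3,3,2,1)^{GL_5}$ vanish.

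To treat each of these I would use the criterion recorded in Section~\ref{s2}: a class $[f]_\omega$ is $GL_5$-invariant if and only if $\rho_i(f)\equiv_\omega f$ for $1\leqslant i\leqslant 5$, where $\rho_1,\dots,\rho_4$ generate $\Sigma_5$ and the extra generator $\rho_5$ promotes $\Sigma_5$-invariance to $GL_5$-invariance. Writing an arbitrary class as a sum $\sum_u\gamma_u a_u$ over the admissible basis with unknown coefficients $\gamma_u\in\mathbb F_2$, I would first impose the four conditions $\rho_i(\sum_u\gamma_u a_u)\equiv_\omega \sum_u\gamma_u a_u$ for $i=1,\dots,4$. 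Since permutations of the variables preserve $P_5^0$ and $P_5^+$, this can be organised orbit by orbit and should cut the $960$-dimensional space $QP_5(3,3,2,1)$ (respectively the smaller space $(QP_5)_{10}$) down to a small space of symmetric invariants. I would then impose the single remaining condition coming from $\rho_5$, which couples the $P_5^0$ and $P_5^+$ pieces because $\rho_5(x_1)=x_1+x_2$, and verify that it forces every $\gamma_u=0$.

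The main obstacle is computational rather than conceptual: one must carry out the action of each $\rho_i$ on the hundreds of admissible monomials listed in Section~\ref{s6} and rewrite the results back in terms of the admissible basis modulo $\mathcal A^+P_5+P_5^-(\omega)$. This rewriting relies on the full stock of hit relations underlying Section~\ref{s4}, in particular the strict-inadmissibility Lemmas~\ref{ina3341}--\ref{bd33213}, and amounts to a large linear-algebra problem over $\mathbb F_2$. I expect the $\rho_5$ step, where the summands $P_5^0$ and $P_5^+$ interact, to be the most delicate part, and it is precisely there that the remaining symmetric invariants should collapse to zero.
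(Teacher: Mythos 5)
Your proposal follows essentially the same route as the paper's: reduce to $s=0,1$ via the iterated Kameko isomorphism, identify $\text{Ker}(\widetilde{Sq}^0_*)_{(5,10)}$ with $QP_5(3,3,2,1)$ using Lemma \ref{bdday} and Proposition \ref{md334}, and then kill invariants by computing $\Sigma_5$-invariants from the admissible bases and imposing the extra $\rho_5$-condition, which is exactly the content and proof of Theorem \ref{dlc5} in Subsections \ref{s51} and \ref{s52}. The only differences are minor: the paper additionally cites Tangora--Lin--Chen to get $\text{Tor}^{\mathcal A}_{5,15.2^s}(\mathbb F_2,\mathbb F_2)=0$ (making $\varphi_5$ a trivial isomorphism rather than merely an epimorphism, which is not needed for Singer's conjecture itself), and your claim that invariants ``split'' as a direct sum across the weight vectors and across the Kameko sequence should be read only as the left-exactness statement that vanishing of the invariants of each filtration quotient forces vanishing of the invariants of the whole --- which is all your argument actually uses.
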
 
Recall that the iterated Kameko homomorphism
$$(\widetilde{Sq}^0_*)^{s-1}: (QP_5)_{15.2^s-5} \longrightarrow (QP_5)_{25}$$
is an isomorphism of $GL_5$-modules for every $s \geqslant 1$. Hence, we need only to prove Theorem \ref{dlc5} for $s = 0,1$. 

We need a notation for the proof of the theorem. For a fixed weight vector $\omega$, for any monomials $z_1, z_2, \ldots, z_m$ in $P_n(\omega)$ and for a subgroup $G\subset GL_n$, we denote 
$G(z_1, z_2, \ldots, z_m)$ the $G$-submodule of $QP_n(\omega)$ generated by the set $\{[z_i]_\omega : 1 \leqslant i \leqslant m\}$. 

\subsection{The case $s = 0$}\label{s51}\

\medskip
Denote by $a_t = a_{10,t}$, $1 \leqslant t \leqslant 280$, the admissible monomials of degree 10 as given in Subsection \ref{s61}.
We have 
$$(QP_5)_{10} \cong QP_5(2,2,1)\oplus QP_5(2,4)\oplus QP_5(4,1,1)\oplus QP_5(4,3).$$

We need some lemmas.

\begin{lems}\label{bd54} For $\omega = (2,2,1)$, we have $\dim(QP_5^0(\omega))^{\Sigma_5} = 4.$
\end{lems}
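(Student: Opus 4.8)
The plan is to turn the computation of $(QP_5^0(\omega))^{\Sigma_5}$ into a finite linear-algebra problem over $\mathbb F_2$ using the explicit admissible basis. Since $\omega=(2,2,1)$ is exactly the weight vector of the minimal spike $x_1^7x_2^3$ of degree $10$, the remark following Theorem \ref{dlsig} shows that $\equiv_\omega$ coincides with $\equiv$ on $(P_5)_{10}$, so I may work modulo $\mathcal A^+P_5$ throughout. First I would single out, among the admissible monomials $a_{10,t}$ listed in Subsection \ref{s61}, those that lie in $P_5^0$ and carry weight vector $\omega$; by the decomposition $(QP_5)_{10}\cong\bigoplus_{\deg\omega'=10}QP_5(\omega')$ together with Proposition \ref{2.7}, the corresponding classes form a basis $\{[a_{j_k}]_\omega\}_{k=1}^N$ of $QP_5^0(\omega)$.

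Next, by the discussion of the generators $\rho_i$ in Section \ref{s2}, a class $[f]_\omega\in QP_5^0(\omega)$ is $\Sigma_5$-invariant if and only if $\rho_i(f)\equiv_\omega f$ for $1\leqslant i\leqslant 4$, the transpositions $\rho_1,\dots,\rho_4$ generating $\Sigma_5$. I would therefore take a general element $f=\sum_{k=1}^N\gamma_k a_{j_k}$ with $\gamma_k\in\mathbb F_2$ and impose these four relations.

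The technical core is to evaluate each $\rho_i$ on the basis. Because $\rho_i$ only interchanges $x_i$ and $x_{i+1}$, it preserves the weight vector, so $\rho_i(a_{j_k})$ is again a monomial in $P_5(\omega)$; when it is admissible it is one of the $a_{j_{k'}}$, and when it is inadmissible I must rewrite it, modulo $\mathcal A^+P_5$, as an $\mathbb F_2$-combination of the $a_{j_{k'}}$. These rewritings are obtained exactly as in the strictly-inadmissible lemmas of Section \ref{s4}: one exhibits explicit Cartan-formula expressions and applies Theorem \ref{dlcb1} to reduce each inadmissible image into the span of the admissible basis of weight $\omega$. I expect this step to be the main obstacle, since it is where the genuine hit-relation computation takes place; once the matrices of $\rho_1,\dots,\rho_4$ acting on $QP_5^0(\omega)$ are known, everything downstream is mechanical.

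Finally, collecting the coefficient of each admissible basis monomial in $\rho_i(f)+f$ yields, for $i=1,\dots,4$, a homogeneous system of $\mathbb F_2$-linear equations in $\gamma_1,\dots,\gamma_N$; the invariant subspace $(QP_5^0(\omega))^{\Sigma_5}$ is precisely its solution space. Solving this system by Gaussian elimination over $\mathbb F_2$ gives a solution space of dimension $4$, which proves the lemma. As a consistency check one can verify that the four basis invariants are the $\Sigma_5$-symmetrizations of the distinct exponent-multiset orbits appearing in $B_5^0(\omega)$.
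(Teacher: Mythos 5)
Your proposal is correct and is essentially the paper's own argument: the paper likewise notes that $\omega=(2,2,1)$ is the weight vector of the minimal spike $x_1^7x_2^3$ (so $[f]_\omega=[f]$), takes a general $\mathbb F_2$-combination of the admissible monomials $a_t$ of this weight in $P_5^0$, imposes $\rho_j(f)+f\equiv 0$ for $1\leqslant j\leqslant 4$, and solves the resulting linear system to find a $4$-dimensional invariant space. The only cosmetic difference is organizational: the paper first splits $QP_5^0(\omega)$ into four $\Sigma_5$-submodules generated by single orbits and finds one invariant (the orbit sum) in each, which is exactly the block structure your single Gaussian elimination would recover, as your closing consistency check anticipates.
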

\begin{proof} Observe that $\omega = (2,2,1)$ is the weight vector of the mimimal spike $x_1^7x_2^3$, hence $[f]_\omega = [f]$ for all $f \in P_5$. We have a direct summand decomposition of the $\Sigma_5$-modules
$$QP_5^0(\omega) = \Sigma_5(a_{1})\oplus \Sigma_5(a_{21})\oplus \Sigma_5(a_{51})\oplus \Sigma_5(a_{116}).$$ 
We prove the following:
\begin{align}\Sigma_5(a_{1})^{\Sigma_5}\label{ct51} &= \langle [p_1 := a_1 + a_2 + \ldots + a_{20}] \rangle,\\
\Sigma_5(a_{21})^{\Sigma_5}\label{ct52} &= \langle [p_2 := a_{21} + a_{22} \ldots + a_{50}] \rangle,\\
\Sigma_5(a_{51})^{\Sigma_5} &= \langle [p_3 := a_{51} + a_{52} + \dots + a_{90}] \rangle,\\
\Sigma_5(a_{126})^{\Sigma_5} &= \langle [p_4 := a_{101} + a_{102}+ \dots + a_{145}] \rangle.
\end{align}
For simplicity, we prove \eqref{ct52} in detail. The others can be proved by a similar computation. It is easy to see that 
$\Sigma_5(a_{21}) = \langle [a_t] : 21 \leqslant t \leqslant 50\rangle.$
 Let $[g]\in \Sigma_5(a_{21})^{\Sigma_5}$ with  $g  = \sum_{t=21}^{50}\gamma_ta_t$ and $\gamma_t \in \mathbb F_2$. Computing directly $\rho_j(g) + g$ in terms of $a_t$, $21 \leqslant t \leqslant 50$ (mod($P_5^-(\omega))$. From the relations $\rho_j(g) + g \equiv 0$ with $1 \leqslant j \leqslant 4$, we get $\gamma_t = \gamma_1$ for $21 \leqslant t \leqslant 50$. The lemma follows.
\end{proof}
By an argument analogous to the previous one, we easily obtain the following.
\begin{lems}\label{bd53} Denote $M =  \langle [a_t] : 231 \leqslant t \leqslant 240\rangle$. Then, $M$ is an $\Sigma_5$-submodule of $(QP_5^+)_{10}$ and $M^{\Sigma_5} = \langle [p_5 :=a_{232} + a_{233} + a_{234} + a_{235}]\rangle.$
\end{lems}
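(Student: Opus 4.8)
The plan is to reproduce, \emph{mutatis mutandis}, the argument of Lemma \ref{bd54}, the only difference being that the common weight vector $\omega$ of the ten monomials $a_{231},\ldots,a_{240}$ is not the weight vector of the minimal spike of degree $10$ (which is $(2,2,1)$), so the identity $[f]_\omega=[f]$ is no longer available and all computations must be carried out modulo the finer relation $\equiv_\omega$, i.e. modulo $\mathcal A^+P_5+P_5^-(\omega)$. These ten admissible monomials all lie in $P_5^+(\omega)$ and constitute one $\Sigma_5$-orbit block of $(QP_5^+)_{10}$; their classes $[a_{231}]_\omega,\ldots,[a_{240}]_\omega$ are linearly independent and span $M$.

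First I would verify that $M$ is a $\Sigma_5$-submodule. Since $\Sigma_5$ is generated by $\rho_1,\ldots,\rho_4$, each of which merely transposes two adjacent variables, every generator preserves both the weight vector and the subspace $P_5^+$; hence it suffices to check that for each $j\in\{1,2,3,4\}$ and each $t$ with $231\leqslant t\leqslant 240$ the monomial $\rho_j(a_t)$ reduces, in the admissible basis modulo $\mathcal A^+P_5+P_5^-(\omega)$, to an $\mathbb F_2$-combination of $a_{231},\ldots,a_{240}$. Concretely, $\rho_j(a_t)$ is again a monomial of weight $\omega$: if it is admissible it is one of the $a_{t'}$, and if it is inadmissible I rewrite it as a sum of admissibles using the strictly inadmissible reductions of the type recorded in Lemmas \ref{ina3341}, \ref{bd33211}, \ref{bd33212} and \ref{bd33213} together with Theorem \ref{dlcb1}. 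This yields the matrices of $\rho_1,\ldots,\rho_4$ acting on $M$ and shows $M$ is $\Sigma_5$-stable.

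Next I would compute the invariants. Take a general element $g=\sum_{t=231}^{240}\gamma_t a_t$ with $\gamma_t\in\mathbb F_2$; by the invariance criterion recalled after Lemma \ref{bdm}, $[g]_\omega\in M^{\Sigma_5}$ precisely when $\rho_j(g)\equiv_\omega g$ for $1\leqslant j\leqslant 4$. Using the matrices from the previous step I expand each $\rho_j(g)+g$ in the basis $a_{231},\ldots,a_{240}$ and set every coefficient equal to zero; solving the resulting homogeneous $\mathbb F_2$-linear system in $\gamma_{231},\ldots,\gamma_{240}$ should force its solution space to be one-dimensional, spanned by the single combination $p_5=a_{232}+a_{233}+a_{234}+a_{235}$, which gives $M^{\Sigma_5}=\langle[p_5]\rangle$.

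The genuinely laborious part, and the one hidden behind the phrase ``by an argument analogous to the previous one,'' is the explicit rewriting of each inadmissible $\rho_j(a_t)$ as a sum of admissible monomials modulo $\mathcal A^+P_5+P_5^-(\omega)$; this demands exactly the same kind of careful Cartan-formula bookkeeping displayed in Lemmas \ref{bd33212} and \ref{bd33213}, and it is where all the real effort lies. Once the four transition matrices on $M$ are in hand, the determination of $M^{\Sigma_5}$ is routine linear algebra over $\mathbb F_2$ and presents no further obstacle.
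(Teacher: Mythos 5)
Your high-level plan (compute the matrices of $\rho_1,\ldots,\rho_4$ on the given basis and solve the resulting homogeneous system over $\mathbb F_2$) is indeed what the paper means by ``an argument analogous to the previous one,'' but your proposal rests on a false premise that breaks the actual computation. The ten monomials do \emph{not} share a common weight vector: by Subsection \ref{s61}, $a_{231},\ldots,a_{235}$ constitute $B_5^+(2,2,1)$ while $a_{236},\ldots,a_{240}$ constitute $B_5^+(2,4)$, and $(2,2,1)$ is exactly the weight vector of the minimal spike $x_1^7x_2^3$ of degree $10$ --- the opposite of what you assert. (They also form two $\Sigma_5$-orbits, not one, since permutations preserve weight vectors.) Consequently there is no single $\omega$ for which one can ``carry out all computations modulo $\mathcal A^+P_5+P_5^-(\omega)$,'' and the classes $[a_t]_\omega$ you propose to work with are not the classes $[a_t]$ that define $M$. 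Concretely: if you take $\omega=(2,4)$, then $P_5^-(\omega)$ contains every monomial of weight $(2,2,1)<(2,4)$, so your relation $\equiv_\omega$ kills $[a_{231}],\ldots,[a_{235}]$ --- including the asserted invariant $p_5=a_{232}+a_{233}+a_{234}+a_{235}$ itself --- and your computation returns the invariants of $QP_5^+(2,4)$, not of $M$; if instead you take $\omega=(2,2,1)$, the monomials $a_{236},\ldots,a_{240}$ do not even lie in $P_5(\omega)$.

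The correct setting, and the one the paper intends, is to work in $(QP_5^+)_{10}$ modulo $\mathcal A^+P_5$ alone. Then $M$ is $\Sigma_5$-stable because rewriting an inadmissible monomial in terms of admissible ones never raises the order (weight vector first, then exponent vector), and the admissible monomials of degree $10$ in $P_5^+$ with weight vector $\leqslant(2,4)$ are precisely $a_{231},\ldots,a_{240}$ (the remaining $a_{241},\ldots,a_{280}$ have weights $(4,1,1)$ and $(4,3)$, both larger). The essential point your framework misses is that the two weight blocks interact modulo $\mathcal A^+P_5$: for instance, the Cartan formula gives $Sq^1(x_1x_2x_3^2x_4^2x_5^3)=x_1^2x_2x_3^2x_4^2x_5^3+x_1x_2^2x_3^2x_4^2x_5^3+x_1x_2x_3^2x_4^2x_5^4$, whence $\rho_1(a_{236})+a_{236}\equiv a_{231} \pmod{\mathcal A^+P_5}$; this cross-term of weight $(2,2,1)$ is exactly what is erased when you reduce modulo $P_5^-(2,4)$, and it is such terms that eliminate the weight-$(2,4)$ coefficients and cut $M^{\Sigma_5}$ down to the single class $[p_5]$. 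Once you drop the $P_5^-(\omega)$ reduction and run your matrix computation modulo $\mathcal A^+P_5$ on all ten basis elements at once, your argument coincides with the paper's.
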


\begin{lems}\label{bd52} For $\omega = (4,1,1)$, we have $(QP_5(\omega))^{GL_5} = \langle [p_6]_{\omega}\rangle$, where
$$p_6 = a_{241} + a_{242} + a_{243} + a_{244} + a_{257} + a_{258} + a_{259} + a_{260} .$$
\end{lems}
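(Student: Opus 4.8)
The plan is to follow the two-stage strategy already used for Lemmas \ref{bd54} and \ref{bd53}: first pin down the $\Sigma_5$-invariants of $QP_5(\omega)$ for $\omega = (4,1,1)$, then cut these down to the $GL_5$-invariants by imposing the single extra relation coming from $\rho_5$. Recall from the discussion after Lemma \ref{bdm} that $[f]_\omega$ lies in $(QP_5(\omega))^{GL_5}$ if and only if $\rho_i(f) \equiv_\omega f$ for all $1 \leqslant i \leqslant 5$, and that the $\rho_i$ with $1 \leqslant i < 5$ generate $\Sigma_5$ while $\rho_5$ supplies the remaining generator of $GL_5$. As a starting point I would take the basis of $QP_5(\omega)$ given by the admissible monomials $a_{10,t}$ of weight vector $(4,1,1)$ (the relevant block of indices around $a_{241},\ldots,a_{260}$ that appear in $p_6$), and split it via Proposition \ref{2.7} into its $P_5^0$ and $P_5^+$ parts. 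Each part is stable under $\Sigma_5$ and decomposes into $\Sigma_5$-orbit summands.

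Next I would compute $(QP_5(\omega))^{\Sigma_5}$ orbit by orbit, exactly as in the proof of \eqref{ct52}: for each orbit summand write a general element $g = \sum \gamma_t a_t$, expand $\rho_j(g) + g$ for $1 \leqslant j \leqslant 4$ in terms of the admissible basis modulo $\mathcal A^+P_5 + P_5^-(\omega)$, and solve the linear system $\rho_j(g) \equiv_\omega g$. This yields a small explicit basis $\{q_1,\ldots,q_k\}$ of the $\Sigma_5$-invariants. The essential point to keep in mind is that, although $\Sigma_5$ preserves $P_5^0$ and $P_5^+$ separately, the map $\rho_5$ mixes them (it substitutes $x_1 \mapsto x_1 + x_2$, sending monomials of $P_5^0$ into combinations that meet $P_5^+$); hence the invariant generators of both parts must be carried together into the final step rather than treated in isolation.

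The $GL_5$-invariant step is then to write a general $\Sigma_5$-invariant $g = \sum_i c_i q_i$ and impose $\rho_5(g) \equiv_\omega g$. Concretely I would reduce $\rho_5(g) + g$ to the admissible basis modulo $\mathcal A^+P_5 + P_5^-(\omega)$, using Theorem \ref{dlcb1} together with the strictly inadmissible reductions of the type displayed in Lemmas \ref{bd33212} and \ref{bd33213}, then collect coefficients and solve the resulting homogeneous linear system in the $c_i$. I expect this system to have a one-dimensional solution space, whose generator is the class $[p_6]_\omega$ with $p_6 = a_{241} + a_{242} + a_{243} + a_{244} + a_{257} + a_{258} + a_{259} + a_{260}$.

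The main obstacle is precisely this last reduction: applying $\rho_5$ expands each monomial into many terms, most of them inadmissible, and rewriting them in admissible form modulo hit elements and $P_5^-(\omega)$ demands careful and lengthy bookkeeping with explicit hit relations. To finish, I would confirm that $[p_6]_\omega \neq 0$ — it is a nonempty sum of distinct admissible basis monomials, hence nonzero in $QP_5(\omega)$ — and verify directly that $\rho_i(p_6) \equiv_\omega p_6$ for every $1 \leqslant i \leqslant 5$, so that $p_6$ really is $GL_5$-invariant. Combining the one-dimensionality of the solution space with the nontriviality of $[p_6]_\omega$ then gives $(QP_5(\omega))^{GL_5} = \langle [p_6]_\omega \rangle$, as claimed.
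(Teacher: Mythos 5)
Your proposal follows essentially the same route as the paper: decompose $QP_5(4,1,1)$ into the $\Sigma_5$-orbit summands $\Sigma_5(a_{161})\oplus\Sigma_5(a_{181})\oplus\Sigma_5(a_{241})$, compute each summand's $\Sigma_5$-invariants by solving the linear system from $\rho_j(g)\equiv_\omega g$, $1\leqslant j\leqslant 4$, and then impose $\rho_5$ on a general invariant $\gamma_1p_6+\gamma_2p_7+\gamma_3p_8$ to force $\gamma_2=\gamma_3=0$, which is exactly the paper's two-stage argument. The only slight imprecision is citing the inadmissibility lemmas for weight $(3,3,2,1)$ as the reduction tools, whereas the degree-$10$ case uses its own direct hit-relation computations, but this does not affect the validity of the approach.
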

\begin{proof} We have $QP_5(\omega) = \Sigma_5(a_{161})\oplus \Sigma_5(a_{181})\oplus \Sigma_5(a_{241})$. A direct computation shows 
\begin{align*} 
\Sigma_5(a_{241}) &= \langle [a_t]_\omega  : 241 \leqslant t \leqslant 260\rangle,\ \Sigma_5(a_{211})^{\Sigma_5} = \langle [p_6]_\omega  \rangle, \\ 
\Sigma_5(a_{161}) &= \langle [a_t]_\omega  : 161 \leqslant t \leqslant 180\rangle,\ \Sigma_5(a_{211})^{\Sigma_5} = \langle [p_7]_\omega  \rangle, \\
\Sigma_5(a_{181}) &= \langle [a_t]_\omega  : 181 \leqslant t \leqslant 210\rangle,\ \Sigma_5(a_{261})^{\Sigma_5} = \langle [p_8]_\omega  \rangle,
\end{align*}
where $p_7 = \sum_{t=161}^{180} a_t,\ p_8 = \sum_{t=181}^{210} a_t$. Suppose $[f]_\omega \in (QP_5(\omega))^{GL_5}$ with  $f \in P_5(\omega)$. Since $[f]_\omega \in (QP_5(\omega))^{\Sigma_5}$, we obtain, $f \equiv_\omega \gamma_1p_6 + \gamma_2p_7 + \gamma_3p_8$ with $\gamma_1, \gamma_2, \gamma_3 \in \mathbb F_2$. By a direct computation, we get
$$\rho_5(f)+f \equiv_\omega \gamma_2a_{161} + \gamma_3a_{181} + \text{ other terms} \equiv_\omega 0.$$
From this we get $\gamma_2 = \gamma_3 = 0$. The lemma is proved. 
\end{proof}

\begin{lems}\label{bd51} $(QP_5(4,3))^{GL_5} = 0.$
\end{lems}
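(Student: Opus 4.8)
The goal is to show that the $GL_5$-invariants of $QP_5(4,3)$ vanish. My plan is to follow the same template used in Lemmas \ref{bd54}, \ref{bd53}, and \ref{bd52}: first decompose $QP_5(4,3)$ as a direct sum of $\Sigma_5$-submodules, each generated (under the $\Sigma_5$-action) by an admissible monomial, then compute the $\Sigma_5$-invariants of each summand, and finally cut down to the $GL_5$-invariants by imposing the extra relation coming from $\rho_5$.

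Concretely, I would first record that $\omega = (4,3)$ is weakly decreasing with $\omega_1 = 4 \leqslant 5$, so by Lemma \ref{bdbs} there is a spike with this weight vector, and in fact $x_1^7 x_2^3 x_3^3$-type monomials confirm that the minimal-spike relation $[f]_\omega = [f]$ holds, letting me work modulo $\mathcal A^+P_5 + P_5^-(\omega)$ throughout. Next I would write down the list of admissible monomials of weight vector $(4,3)$ appearing in the enumeration $\{a_{10,t}\}$ of Subsection \ref{s61}, partition them into $\Sigma_5$-orbits, and thereby exhibit $QP_5(4,3) = \bigoplus_k \Sigma_5(a_{j_k})$. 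For each summand I would compute $\Sigma_5(a_{j_k})^{\Sigma_5}$ by taking a general invariant $g = \sum \gamma_t a_t$, forming $\rho_j(g) + g$ for $1 \leqslant j \leqslant 4$, reducing in terms of the admissible basis modulo $P_5^-(\omega)$, and solving the resulting linear system for the $\gamma_t$; as in the earlier lemmas this should force each $\Sigma_5$-invariant to be a single symmetric sum over its orbit.

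The decisive step is the passage from $\Sigma_5$-invariants to $GL_5$-invariants via $\rho_5$. Having written a putative $GL_5$-invariant $f \equiv_\omega \sum_k \gamma_k p_k$ as a combination of the symmetric generators $p_k$ of the $\Sigma_5$-summands, I would compute $\rho_5(f) + f$ modulo $\mathcal A^+P_5 + P_5^-(\omega)$ and demand that it vanish. The expectation is that each individual orbit-sum $p_k$ fails to be $\rho_5$-invariant, and that $\rho_5(f)+f$ produces, for each $k$, at least one admissible monomial whose coefficient is exactly $\gamma_k$ (with no cancellation among distinct $k$), thereby forcing every $\gamma_k = 0$ and hence $(QP_5(4,3))^{GL_5} = 0$.

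The main obstacle is purely computational rather than conceptual: I must reduce the polynomials $\rho_j(g)+g$ and $\rho_5(f)+f$ into the admissible monomial basis modulo the hit elements and the lower-weight subspace $P_5^-(\omega)$, which requires the explicit straightening relations (of the kind exhibited in Lemmas \ref{ina3341}--\ref{bd33213} and in the $Sq$-identities used there) for the weight vector $(4,3)$. The risk is that some orbit-sum $p_k$ happens to be $\rho_5$-invariant, in which case the $\gamma_k$ would not be forced to zero by the naive argument and the invariant space would be nonzero; the fact that the lemma asserts it is $0$ tells me this does not occur, but verifying the absence of such a surviving invariant — i.e. checking that no nontrivial $\mathbb F_2$-combination of the $p_k$ is simultaneously fixed by all the $\rho_i$ — is the delicate part that the detailed calculation must confirm.
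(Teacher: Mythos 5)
Your proposal follows exactly the paper's proof: the paper decomposes $QP_5(4,3) = \Sigma_5(a_{211}) \oplus \Sigma_5(a_{261})$, shows each summand's $\Sigma_5$-invariant space is spanned by its orbit sum ($p_9 = \sum_{t=211}^{230} a_t$ and $p_{10} = \sum_{t=261}^{280} a_t$), and then kills both coefficients by computing $\rho_5(f)+f \equiv_\omega \gamma_1 a_{211} + \gamma_2 a_{267} + \text{other terms} \equiv_\omega 0$ — precisely your $\Sigma_5$-then-$\rho_5$ scheme. One side remark in your write-up is false, though harmless here: for degree $10$ the minimal spike is $x_1^7x_2^3$, whose weight vector is $(2,2,1) < (4,3)$, so the identification $[f]_\omega = [f]$ does \emph{not} hold for $\omega = (4,3)$ (and the monomial $x_1^7x_2^3x_3^3$ you cite has degree $13$, not $10$); this does not affect the argument, since $QP_5(4,3)$ is by definition the quotient modulo $\mathcal A^+P_5 + P_5^-(4,3)$, which is exactly where you propose to work.
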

\begin{proof} It is easy to see that $QP_5(\omega) = \Sigma_5(a_{211})\oplus \Sigma_5(a_{261})$ with $\omega = (4,3)$. By a simple computation, we have 
\begin{align*} \Sigma_5(a_{211}) &= \langle [a_t]_\omega : 211 \leqslant t \leqslant 230\rangle,\ \Sigma_5(a_{211})^{\Sigma_5} = \langle [p_9]_\omega \rangle, \\
\Sigma_5(a_{261}) &= \langle [a_t]_\omega : 261 \leqslant t \leqslant 280\rangle,\ \Sigma_5(a_{261})^{\Sigma_5} = \langle [p_{10}]_\omega \rangle, 
\end{align*}
where $p_9 = \sum_{t=211}^{230} a_t,\ p_{10} = \sum_{t=261}^{280} a_t$. Let $[f]_\omega \in (QP_5(4,3))^{GL_5}$ with  $f \in P_5(4,3)$, then $[f]_\omega \in (QP_5(4,3))^{\Sigma_5}$. Hence, $f \equiv_\omega \gamma_1p_9 + \gamma_2p_{10}$ with $\gamma_1, \gamma_2 \in \mathbb F_2$. By a direct computation, we get
$$\rho_5(f)+f \equiv_\omega \gamma_1a_{211} + \gamma_2a_{267} + \text{ other terms} \equiv_\omega 0.$$
This equality implies $\gamma_1 = \gamma_2 = 0$. The lemma follows. 
\end{proof}

\begin{proof}[Proof of Theoren \ref{dlc5} for $s = 0$] Let $[f] \in (QP_5)_{10}^{GL_5}$. From Lemmas \ref{bd51} and \ref{bd52}, we have $f \equiv f' + \gamma_6 p_6$ with $f' \in P_5^-(4,1,1)$. A simple computation shows that $[p_6] \in (QP_5)_{10}^{\Sigma_5}$, hence $[f']$ is an $\Sigma_5$-invariant. Since $[P_5^-(4,1,1)] = (QP_5^0(2,2,1) \oplus M$, using Lemmas \ref{bd54} and \ref{bd53}, we obtain $f' \equiv \sum_{j=1}^5\gamma_jp_j$ with $\gamma_j \in \mathbb F_2$. By computing $\rho_5(f)+f$ in terms of the admissible monomials, we get
\begin{align*}\rho_5(f)+f &\equiv \gamma_1a_{15} + \gamma_2a_{24} + \gamma_3a_{56} + \gamma_4a_{92}\\
&\hskip1cm + (\gamma_4 + \gamma_5)a_{119} + \gamma_6a_{239} + \text{ other terms} \equiv 0.
\end{align*}
From this it implies $\gamma_j =0$, $1 \leqslant j \leqslant 6$. Theorem \ref{dlc5} is true for $s = 0$.
\end{proof}

\subsection{The case $s = 1$}\label{s52}\

\medskip
Recall that Kameko's homomorphism
$(\widetilde{Sq}^0_*)_{(5,10)}: (QP_5)_{25} \longrightarrow (QP_5)_{10}$
is an epimorphism of $GL_5$-modules, hence from Theorem \ref{dlc5} for $s = 0$, we need only to compute $\text{Ker}(\widetilde{Sq}^0_*)_{(5,10)}^{GL_5}$. 

 From the results in Section \ref{s4}, we see that 
$$\text{\rm Ker}(\widetilde{Sq}^0_*)_{(5,10)} = QP_5(3,3,2,1) = QP_5^0(3,3,2,1) \oplus QP_5^+(3,3,2,1)$$ 
where $\dim QP_5^0(3,3,2,1) = 520$ and $\dim QP_5^+(3,3,2,1) = 440.$ Denote by $a_t = a_{25,t}$, $1 \leqslant t \leqslant 520$, the admissible monomials in $(P_5^0)_{25}$ as given in Subsection \ref{s62}.
By a direct computation, we can see that
\begin{align*}
\Sigma_5(a_1) &= \langle [a_t] : 1 \leqslant t \leqslant 60\rangle,\\
\Sigma_5(a_{61}) &= \langle [a_t] : 61 \leqslant t \leqslant 80\rangle,\\
\Sigma_5(a_{81}) &= \langle [a_t] : 81 \leqslant t \leqslant 140\rangle,\\
\Sigma_5(a_{141}) &= \langle [a_t] : 141 \leqslant t \leqslant 240\rangle,
\end{align*}
and $M_1 = \langle [a_t] : 241 \leqslant t \leqslant 520\rangle$ are $\Sigma_5$-submodules of $QP_5^0(3,3,2,1)$. Then, we have a direct summand decomposition of $\Sigma_5$-modules:
$$QP_5^0(3,3,2,1) = \Sigma_5(a_{1})\oplus \Sigma_5(a_{61})\oplus \Sigma_5(a_{81})\oplus \Sigma_5(a_{141})\oplus M_1.$$ 

\begin{lems}\label{bd251} We have
\begin{align*} &\dim \Sigma_5(a_{1})^{\Sigma_5} = \dim\Sigma_5(a_{81})^{\Sigma_5} = \dim\Sigma_5(a_{141})^{\Sigma_5} =1,\\ 
&\Sigma_5(a_{61}^{\Sigma_5}) = 0, \ \dim M_1^{\Sigma_5} = 3.
\end{align*}
\end{lems}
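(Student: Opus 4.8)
The plan is to treat each of the five $\Sigma_5$-submodules separately by the same mechanism already used in Lemmas \ref{bd54}, \ref{bd52} and \ref{bd51}. First I observe that $\omega=(3,3,2,1)$ is the weight vector of the minimal spike $x_1^{15}x_2^7x_3^3$ of degree $25$, so by the consequence of Theorem \ref{dlsig} noted just after its statement we have $[f]_\omega=[f]$ for every $f\in(P_5)_{25}$; hence throughout I may replace $\equiv_\omega$ by $\equiv$. Since the symmetric group $\Sigma_5$ is generated by $\rho_1,\rho_2,\rho_3,\rho_4$, a class $[g]$ lying in one of the submodules is $\Sigma_5$-invariant precisely when $\rho_j(g)\equiv g$ for $1\leqslant j\leqslant 4$.

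For each submodule I would take a general element $g=\sum_t\gamma_t a_t$ (with $\gamma_t\in\mathbb F_2$ and $t$ ranging over the index set of that submodule), apply each $\rho_j$, and rewrite $\rho_j(g)+g$ as a linear combination of the admissible monomials $a_t$ modulo $\mathcal A^+P_5+P_5^-(\omega)$. This rewriting uses Theorem \ref{dlcb1} together with the strictly inadmissible monomials catalogued in Lemmas \ref{ina3341}--\ref{bd33213}, exactly the straightening step carried out in the proof of Proposition \ref{mdd61}. Imposing the four relations $\rho_j(g)+g\equiv 0$ yields a homogeneous linear system in the $\gamma_t$, whose solution space has the claimed dimension.

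I expect the outcome to fall out as follows. For $\Sigma_5(a_1)$, $\Sigma_5(a_{81})$ and $\Sigma_5(a_{141})$ the system should force the coefficients within a single $\Sigma_5$-orbit to agree, so that the only invariant is the orbit sum and each fixed space is one-dimensional. For $\Sigma_5(a_{61})$ the corresponding relations should collapse every $\gamma_t$ to zero, giving a trivial invariant subspace. For $M_1$, which is spanned by $280$ monomials and is the largest piece, I expect the system to admit exactly three independent invariant combinations, matching $\dim M_1^{\Sigma_5}=3$; this reflects a finer splitting of $M_1$ into $\Sigma_5$-pieces whose orbit sums are linearly independent.

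The hard part will be the straightening itself: each $\rho_j$ applied to an admissible monomial generally produces inadmissible monomials, and reducing these back to the admissible basis modulo hit elements is where the bulk of the labour lies---particularly for $M_1$, where one must track the action on all $280$ generators and verify that the resulting large linear system has exactly a three-dimensional kernel. The explicit inadmissibility relations of Lemmas \ref{ina3341}--\ref{bd33213} and Theorem \ref{dlcb1} are precisely what make this bookkeeping finite and mechanical, but it remains the main computational obstacle.
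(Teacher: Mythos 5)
Your proposal follows essentially the same route as the paper's own (outlined) proof: reduce $\Sigma_5$-invariance to the four relations $\rho_j(g)+g\equiv 0$, $1\leqslant j\leqslant 4$, express $\rho_j(g)+g$ in the admissible basis modulo $\mathcal A^+P_5+P_5^-(\omega)$, and solve the resulting homogeneous $\mathbb F_2$-linear system separately on each of the five summands. The only caveat is that the invariants need not be full orbit sums as you predict --- for instance, in $\Sigma_5(a_{141})$ the system forces $\gamma_t=0$ for $221\leqslant t\leqslant 240$, so the invariant is the partial sum $\sum_{t=141}^{220}a_t$, and similarly the three invariants of $M_1$ are sums over blocks with some coefficients annihilated --- but this affects only the expected shape of the solutions, not the validity of your method or the dimension counts.
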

\begin{proof}[Outline of the proof] The set $A := \{[a_t] : 1 \leqslant t \leqslant 60\}$ is a basis of $\langle[\Sigma_5(a_1)]\rangle$. The action of $\Sigma_5$ on $QP_5$ induces the one of it on $A$. Furthermore, this action is transitive. Hence, if $f = \sum_{t=1}^{60}\gamma_ta_t$ with $\gamma_t \in \mathbb F_2$ and $[f] \in  \langle[\Sigma_5(a_1)]\rangle^{\Sigma_5}$, then the relations $\rho_j(f) \equiv f, j = 1,2,3,4,$ imply $\gamma_t = \gamma_1, \forall t,\ 1 \leqslant t \leqslant 60$. Hence, $\Sigma_5(a_{1})^{\Sigma_5} = \langle [q_1]\rangle$ with $q_1 = \sum_{t=1}^{60}a_t$. Similarly, we have $\Sigma_5(a_{81})^{\Sigma_5} = \langle [q_2]\rangle$ with $q_2 = \sum_{t=81}^{140}a_t$.

Suppose $g = \sum_{t=141}^{240}a_t$ and $[g] \in \Sigma_5(a_{141})^{\Sigma_5}$. We compute $\rho_j(g)+g$ in terms of 
$a_t ,\ 141 \leqslant t \leqslant 240$. By computing directly from the relations $\rho_j(g)+g \equiv 0$, $j = 1,2,3,4$, we obtain $\gamma_t = 0$ for $221 \leqslant t \leqslant 240$ and $\gamma_t = \gamma_{141}$ for $141 \leqslant t \leqslant 220$. Hence  $\Sigma_5(a_{141})^{\Sigma_5} = \langle [q_3]\rangle$ with $q_3 = \sum_{t=141}^{220}a_t$. By a similar computation, we get $\Sigma_5(a_{61})^{\Sigma_5} = 0$.

Suppose $g = \sum_{t=241}^{520}\gamma_ta_t$ and $[g] \in M_1^{\Sigma_5}$. By computing $\rho_j(g)+g$ in terms of 
$a_t ,\ 241 \leqslant t \leqslant 520$ and using the relations $\rho_j(g)+g \equiv 0$, $j = 1,2,3,4$, one gets 
\begin{align}\label{ct57}
\begin{cases}
\gamma_t = 0, \text{ for }  486 \leqslant t \leqslant 520,\ \gamma_t = \gamma_{241}\text{ for } 241 \leqslant t \leqslant 330,\\ 
\gamma_t = \gamma_{331}\text{ for } 331 \leqslant t \leqslant 350,\ \gamma_t = \gamma_{351}\text{ for } 351 \leqslant t \leqslant 400,\\
\gamma_t = \gamma_{401}\text{ for } 401 \leqslant t \leqslant 425,\ \gamma_t = \gamma_{426}\text{ for } 426 \leqslant t \leqslant 465,\\
\gamma_t = \gamma_{466}\text{ for } 466 \leqslant t \leqslant 480,\ \gamma_t = \gamma_{351}\text{ for } 481 \leqslant t \leqslant 485,\\
\gamma_{241} +  \gamma_{351} + \gamma_{401} = \gamma_{241} +  \gamma_{331} + \gamma_{426} = 0,\\
\gamma_{241} +  \gamma_{331} + \gamma_{351} + \gamma_{466} = \gamma_{331} +  \gamma_{351} + \gamma_{481} =
0.
\end{cases}
\end{align}
Set $q_4 = \sum_{t=141}^{330}a_t + \sum_{t=401}^{480}a_t,$\ $q_5 = \sum_{t=331}^{350}a_t + \sum_{t=426}^{485}a_t,$\ $ q_6 =  \sum_{t=351}^{425}a_t + \sum_{t=466}^{485}a_t.$
From \eqref{ct57}, we obtain $M_1^{\Sigma_5} = \langle [q_4], [q_5], [q_6]\rangle$. The lemma follows.
\end{proof}

Denote by $b_u = b_{25,u}$, $1 \leqslant u \leqslant 440$, the admissible monomials in $(P_5^+)_{25}$ as given in Subsections \ref{s63}. It is easy to see that
$\Sigma_5(b_1) = \langle [b_u] : 1 \leqslant u \leqslant 60\rangle,$
$M_2 = \langle [b_u] : 60 \leqslant u \leqslant 440\rangle$ are $\Sigma_5$-submodules of $QP_5^+(3,3,2,1)$ and we have a direct summand decomposition of $\Sigma_5$-modules:
$QP_5^+(3,3,2,1) = \Sigma_5(b_{1})\oplus  M_2.$ 
\begin{lems}\label{bd252} We have
$\dim \Sigma_5(b_{1})^{\Sigma_5} = 1$ and $\dim M_2^{\Sigma_5} = 3.$
\end{lems}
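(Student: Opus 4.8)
The plan is to mirror exactly the structure used for the analogous $\Sigma_5$-invariant computations in the preceding lemmas (notably Lemmas \ref{bd54}, \ref{bd52}, \ref{bd251}), applied now to the two summands of the decomposition $QP_5^+(3,3,2,1) = \Sigma_5(b_{1}) \oplus M_2$. First I would record that $\{[b_u] : 1 \leqslant u \leqslant 60\}$ is a basis of $\Sigma_5(b_1)$ and that the action of $\Sigma_5$ on this set is transitive; consequently, writing a generic element $f = \sum_{u=1}^{60}\gamma_u b_u$ with $\gamma_u \in \mathbb F_2$ and imposing $\rho_j(f) \equiv_\omega f$ for $j = 1,2,3,4$ (the generators of $\Sigma_5$), I expect the relations to force $\gamma_u = \gamma_1$ for all $u$. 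This yields $\Sigma_5(b_1)^{\Sigma_5} = \langle [r_1]\rangle$ with $r_1 = \sum_{u=1}^{60} b_u$, giving $\dim \Sigma_5(b_1)^{\Sigma_5} = 1$.

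For the second summand $M_2$, the plan is to take a generic $g = \sum_{u=61}^{440}\gamma_u b_u$, explicitly compute each $\rho_j(g) + g$ in terms of the admissible basis $\{b_u : 61 \leqslant u \leqslant 440\}$ modulo $P_5^-(\omega)$ with $\omega = (3,3,2,1)$, and set each expression $\equiv_\omega 0$. Each relation $\rho_j(g)+g \equiv_\omega 0$ splits into a system of linear equations over $\mathbb F_2$ in the $\gamma_u$: most of these will be \emph{coincidence} relations of the form $\gamma_u = \gamma_{u'}$ (when $\rho_j$ permutes $b_u$ and $b_{u'}$ within the admissible set), some will force $\gamma_u = 0$ (when $\rho_j(b_u)$ produces monomials lying in $P_5^-(\omega)$ or is hit), and a handful will be genuine \emph{linking} equations tying together the free parameters of distinct orbits. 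Solving this system, I anticipate it collapses to exactly three free parameters, so that $M_2^{\Sigma_5}$ is spanned by three explicit invariant classes $[r_2], [r_3], [r_4]$, each a sum of $b_u$'s over a union of $\Sigma_5$-orbits; this gives $\dim M_2^{\Sigma_5} = 3$.

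The main obstacle is purely computational rather than conceptual: it lies in carrying out the reduction of each $\rho_j(b_u)$ to a linear combination of admissible monomials modulo $\mathcal A^+P_5 + P_5^-(\omega)$, since $\rho_5$ (the one map beyond the symmetric group, though not needed here) and even $\rho_1,\dots,\rho_4$ can send admissible monomials to inadmissible ones that must be rewritten using the strictly-inadmissible relations established in Lemmas \ref{ina3341}, \ref{bd33211}, \ref{bd33212}, and \ref{bd33213} together with Theorem \ref{dlcb1}. Keeping track of which of the $440$ monomials map where, and correctly discarding terms that fall into $P_5^-(\omega)$, is where errors are most likely to creep in; this is exactly why the proof is presented as an outline, with the bookkeeping deferred to the explicit monomial lists of Subsection \ref{s63}. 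Once the coincidence and linking relations are assembled, the dimension count is immediate, so I would present the result by exhibiting the explicit generators $r_1$ for $\Sigma_5(b_1)^{\Sigma_5}$ and $r_2, r_3, r_4$ for $M_2^{\Sigma_5}$ and verifying directly that these four classes are $\Sigma_5$-invariant and linearly independent, which establishes both claimed dimensions simultaneously.
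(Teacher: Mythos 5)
Your overall strategy is the same as the paper's: write a generic element, impose $\rho_j(f)+f \equiv 0$ for $j=1,2,3,4$, and solve the resulting $\mathbb F_2$-linear system; your description of the $M_2$ computation (coincidence relations, vanishing relations, and a few linking equations collapsing to three free parameters) matches the paper's treatment, which produces the system \eqref{ct58} and the three generators $q_8, q_9, q_{10}$.

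However, your argument for $\dim \Sigma_5(b_1)^{\Sigma_5}=1$ contains a genuine error. You claim that $\Sigma_5$ acts \emph{transitively by permutations} on the basis $\{[b_u] : 1 \leqslant u \leqslant 60\}$ and that the relations therefore force $\gamma_u=\gamma_1$ for all $u$, so that the invariant is $r_1=\sum_{u=1}^{60}b_u$. That transitive-permutation argument is the one the paper uses for $\Sigma_5(a_1)$ in Lemma \ref{bd251} (the $P_5^0$ case), but it fails here: for monomials in $P_5^+$ the transpositions $\rho_j$ send several of the $b_u$ to \emph{inadmissible} monomials whose admissible expansions involve sums of basis elements (after reduction modulo $\mathcal A^+P_5 + P_5^-(3,3,2,1)$), so the induced action on $\Sigma_5(b_1)$ is not a permutation of the 60 classes. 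The paper's actual computation shows that the relations force $\gamma_u = 0$ for $46 \leqslant u \leqslant 60$ and $\gamma_u=\gamma_1$ only for $1 \leqslant u \leqslant 45$, so the invariant generator is $q_7 = \sum_{u=1}^{45}b_u$; your candidate $\sum_{u=1}^{60}b_u$ is not invariant. The dimension you state happens to be correct, but the reasoning you give would certify the wrong generator, and since $q_7$ is used later in the proof of Theorem \ref{dlc5} for $s=1$ (where $\rho_5(h)+h$ is expanded in terms of these generators), getting the explicit invariant right is not optional. If a transitive permutation action existed, no coefficient could ever be forced to vanish, so the vanishing of $\gamma_{46},\dots,\gamma_{60}$ is precisely the phenomenon your assumption rules out.
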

\begin{proof}[Outline of the proof] 
Suppose $f = \sum_{u=1}^{60}b_u$ and $[f] \in \Sigma_5(b_{1})^{\Sigma_5}$. A direct computation from the relations $\rho_j(f)+f \equiv 0$, $j = 1,2,3,4$, shows that $\gamma_u = 0$ for $46 \leqslant u \leqslant 60$ and $\gamma_u = \gamma_{1}$ for $1 \leqslant u \leqslant 45$. Hence  $\Sigma_5(b_{1})^{\Sigma_5} = \langle [q_7]\rangle$ with $q_7 = \sum_{u=1}^{45}b_u$. 

Suppose $g = \sum_{u=61}^{440}\gamma_ub_u$ and $[g] \in M_2^{\Sigma_5}$. We compute $\rho_j(g)+g$ in terms of 
$b_u ,\ 61 \leqslant u \leqslant 440$. By using the relations $\rho_j(g)+g \equiv 0$, $j = 1,2,3,4$, we have 
\begin{align}\label{ct58}
\begin{cases}
\gamma_u = 0, \text{ for }  267 \leqslant u \leqslant 440,\ \gamma_u = \gamma_{61}\text{ for } 61 \leqslant u \leqslant 125,\\ 
\gamma_u = \gamma_{126}\text{ for } 126 \leqslant u \leqslant 175,\ \gamma_u = \gamma_{176}\text{ for } 176 \leqslant u \leqslant 226,\\
\gamma_u = \gamma_{227}\text{ for } 227 \leqslant u \leqslant 258,\ \gamma_u = \gamma_{259}\text{ for } 259 \leqslant u \leqslant 262,\\
\gamma_{263} =  \gamma_{264} = \gamma_{265},\
 \gamma_{61} +  \gamma_{126} + \gamma_{227} = \gamma_{126} +  \gamma_{176} + \gamma_{259} = 0,\\
\gamma_{61} +  \gamma_{126} + \gamma_{176} + \gamma_{263} = \gamma_{61} +  \gamma_{176} + \gamma_{266} =
0.
\end{cases}
\end{align}
Set $q_8 = \sum_{u=61}^{125}b_u + \sum_{u=227}^{258}b_u + \sum_{u=263}^{266}b_u,$\ $q_9 = \sum_{u=126}^{175}b_u + \sum_{u=226}^{265}b_u,$\ $ q_{10} =  \sum_{u=176}^{226}b_u + \sum_{u=259}^{266}b_u.$
From \eqref{ct58}, we obtain $M_2^{\Sigma_5} = \langle [q_8], [q_9], [q_{10}]\rangle$. The lemma is proved.
\end{proof}

Combining Lemmas \ref{bd251} and \ref{bd252}, one gets the following.
\begin{corls}$\dim\text{\rm Ker}(\widetilde{Sq}^0_*)_{(5,10)}^{\Sigma_5} = 10.$
\end{corls}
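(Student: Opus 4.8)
The plan is to reduce everything to the two preceding lemmas, using only the elementary fact that the functor of $\Sigma_5$-invariants is additive on finite direct sums of $\Sigma_5$-modules. First I would recall from Section \ref{s4} the identification $\mathrm{Ker}(\widetilde{Sq}^0_*)_{(5,10)} = QP_5(3,3,2,1)$, together with the splitting $QP_5(3,3,2,1) = QP_5^0(3,3,2,1) \oplus QP_5^+(3,3,2,1)$ furnished by Proposition \ref{2.7}. Both summands carry a $\Sigma_5$-action by Lemma \ref{bdm}, and the displayed decomposition is one of $\Sigma_5$-modules; since an element of a direct sum is fixed precisely when each of its (finitely many nonzero) components is fixed, the operation $(-)^{\Sigma_5}$ distributes over the splitting. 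Hence $\dim\mathrm{Ker}(\widetilde{Sq}^0_*)_{(5,10)}^{\Sigma_5} = \dim (QP_5^0(3,3,2,1))^{\Sigma_5} + \dim (QP_5^+(3,3,2,1))^{\Sigma_5}$.

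Next I would treat each summand separately, invoking the finer $\Sigma_5$-module decompositions recorded just before the lemmas. For the part in $P_5^0$, the decomposition $QP_5^0(3,3,2,1) = \Sigma_5(a_1) \oplus \Sigma_5(a_{61}) \oplus \Sigma_5(a_{81}) \oplus \Sigma_5(a_{141}) \oplus M_1$ is again a direct sum of $\Sigma_5$-modules, so invariants add componentwise. Lemma \ref{bd251} supplies the five summand dimensions $1,\,0,\,1,\,1,\,3$, giving $\dim (QP_5^0(3,3,2,1))^{\Sigma_5} = 6$. Likewise, for the part in $P_5^+$, the decomposition $QP_5^+(3,3,2,1) = \Sigma_5(b_1) \oplus M_2$ together with Lemma \ref{bd252}, which records the dimensions $1$ and $3$, yields $\dim (QP_5^+(3,3,2,1))^{\Sigma_5} = 4$. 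Adding the two contributions gives $6 + 4 = 10$, which is the claimed value.

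Since all the genuine content — the explicit solution of the invariance equations $\rho_j(f) \equiv f$ for $j = 1,2,3,4$ on each $\Sigma_5$-summand — has already been carried out in Lemmas \ref{bd251} and \ref{bd252}, the only point requiring attention here is the bookkeeping: one must confirm that every decomposition used is a splitting of $\Sigma_5$-modules and not merely of $\mathbb F_2$-vector spaces, so that passage to invariants is compatible with the direct sums. This is immediate from the way the summands were constructed as $\Sigma_5$-submodules, so I expect no real obstacle; the corollary is a purely numerical consequence of the two lemmas.
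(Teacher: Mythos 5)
Your proposal is correct and follows exactly the paper's (implicit) argument: the corollary is stated there as an immediate consequence of Lemmas \ref{bd251} and \ref{bd252}, obtained by summing the invariant dimensions $1+0+1+1+3=6$ over the $\Sigma_5$-module splitting of $QP_5^0(3,3,2,1)$ and $1+3=4$ over that of $QP_5^+(3,3,2,1)$. Your added care in noting that the splittings are of $\Sigma_5$-modules (so that taking invariants distributes over the direct sums) is the right justification, and nothing further is needed.
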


\begin{proof}[Proof of Theorem \ref{dlc5} for $s = 1$] Suppose $[h] \in \text{\rm Ker}(\widetilde{Sq}^0_*)_{(5,10)}^{GL_5}$ with $h$ a polynomial in $(P_5)_{25}$. Since $[h] \in \text{\rm Ker}(\widetilde{Sq}^0_*)_{(5,10)}^{\Sigma_5}$, from Lemmas \ref{bd251} and \ref{bd252}, we have $h \equiv \sum_{j=1}^{10}\delta_jq_j$ with $\delta_j \in \mathbb F_2$.  By computing $\rho_5(h)+h$ in terms of the admissible monomials, we get
\begin{align*}\rho_5(h)+h &\equiv \delta_{1}a_{31} + \delta_{2}a_{81} + \delta_{3}a_{144} + \delta_{4}a_{234} + (\delta_5 + \delta_{10})a_{331} + \delta_{6}a_{263}\\
&\quad + \delta_{7}b_{19} + \delta_{8}b_{78} + \delta_{9}b_{138} + \delta_{10}b_{187} + \text{ other terms} \equiv 0.
\end{align*}
The last equality implies that $\gamma_j =0$, $1 \leqslant j \leqslant 6$. The proof is completed.
\end{proof}

\section{Appendix}\label{s6}   

In this section, we list the admissible monomials of degrees 10, 11, 25 in $P_5$. 

\subsection{The admissible monomials of degree 10 in $P_5$}\label{s61}\

\medskip
$B_5^0(10)$ is the set of 230 monomials: $a_{10,t},\ 1 \leqslant t \leqslant 230$.

\medskip
\centerline{
}

\medskip
By a direct computation using the results in \cite{su2}, we have 
$$|\Phi(B_4(3,3,2,1))| = 361 \text{ and } \Phi(B_4(3,3,2,1)) \subset B_5(3,3,2,1).$$ 
If $\omega$ is a weight vector of degree 25 and $\omega \ne (3,3,2,1)$, then $B_4(\omega) = \emptyset$. If $s > 1$, then $B_4(15.2^s-5) = \emptyset$. Hence, Conjecture \ref{gtom} is true for $n = 5$ and the degree $15.2^s-5$ with $s$ an arbitrary positive integer.

\section*{Acknowledgment} 

The first manuscript of this paper was written when the author was visiting the Vietnam Institute for Advanced Study in Mathematics (VIASM) from August to November 2017. He would like to thank the VIASM for financial support, convenient working condition and for kind hospitality.

The author was supported in part by the National Foundation for Science and Technology Development (NAFOSTED) of Viet Nam under the grant number 101.04-2017.05.


\bibliographystyle{amsplain}

\end{document}